\def\newaliasedtheorem#1[#2]#3{
	\newaliascnt{#1@alt}{#2}
	\newtheorem{#1}[#1@alt]{#3}
	\expandafter\newcommand\csname #1@altname\endcsname{#3}
}
\numberwithin{equation}{section}
\newtheoremstyle{slanted}{\topsep}{\topsep}{\slshape}{}{\bfseries}{.}{.5em}{}
\theoremstyle{plain}
\newtheorem{theorem}{Theorem}[section]
\theoremstyle{definition}
\theoremstyle{remark}
\newcommand{\setR}{\mathbb{R}}
\newcommand{\QQ}{\mathfrak Q}
\newcommand{\PP}{\mathsf{P}}
\let\altphi\phi
\let\phi\varphi
\let\varphi\altphi
\let\altphi\undefined
\newcommand{\abs}[1]{\left\lvert#1\right\rvert}
\newcommand{\norm}[1]{\left\lVert#1\right\rVert}
\DeclareMathOperator{\diam}{diam}
\newcommand{\di}{\mathop{}\!\mathrm{d}}
\newcommand{\loc}{{\rm loc}}
\newcommand{\res}{\mathop{\hbox{\vrule height 7pt width .5pt depth 0pt
			\vrule height .5pt width 6pt depth 0pt}}\nolimits}
\DeclareMathOperator{\supp}{supp}
\DeclareMathOperator{\Lip}{Lip}
\newcommand{\Geo}{{\rm Geo}}
\newcommand{\Opt}{\mathrm{OptGeo}}
\newcommand{\R}{\mathbb{R}}
\newcommand{\Ric}{\mathrm{Ric}}
\newcommand{\N}{\mathbb{N}}
\newcommand{\qq}{\mathfrak q}
\newcommand{\f}{\varphi}
\newcommand{\cI}{\mathcal{I}}
\newcommand{\dom}{{\rm Dom\,}}
\renewcommand{\L}{\mathcal{L}}
\newcommand{\dist}{\mathsf{d}}
\newcommand{\sfd}{\mathsf{d}}
\newcommand{\meas}{\mathfrak{m}}
\newcommand{\mm}{\mathfrak{m}}
\DeclareMathOperator{\CD}{CD}
\DeclareMathOperator{\RCD}{RCD}
\newfont{\tmpf}{cmsy10 scaled 2500}
\def\XXint#1#2#3{{\setbox0=\hbox{$#1{#2#3}{\int}$ }
		\vcenter{\hbox{$#2#3$ }}\kern-.6\wd0}}
\begin{document}
	
\title{Quantitative Obata's Theorem}

\author{Fabio Cavalletti\thanks{F. Cavalletti: SISSA, Trieste, Italy, email: cavallet@sissa.it} , \;
Andrea Mondino\thanks{A. Mondino: Mathematical Institut, University of Oxford, UK, email: Andrea.Mondino@maths.ox.ac.uk}  \;
 and \, Daniele Semola\thanks{D. Semola: Scuola Normale Superiore, Pisa, Italy,  email: daniele.semola@sns.it}}  %


\maketitle

\begin{abstract}
We prove a quantitative version of Obata's Theorem involving the shape of functions with null mean value when compared  with the cosine of distance functions from single points.
The deficit between the diameters of the manifold and of the corresponding sphere is bounded likewise. These results are  obtained in the general framework of (possibly non-smooth) metric measure spaces
with curvature-dimension conditions through a quantitative analysis of the transport-rays decompositions obtained by the localization method.
\end{abstract}

\tableofcontents

\section{Introduction}
One of the core topics in geometric analysis is the  deep connection between the geometry of a domain (in a possibly curved space) and spectral properties of the Laplacian defined on it.
\\The present paper focuses on the first eigenvalue $\lambda_{1}$ of the Laplacian (with Neumann boundary conditions, in case the domain has non-empty boundary). Since the Poincar\'e(-Wirtinger) inequality plays an important role in analysis and since a lower bound of the first eigenvalue gives an upper bound of the constant in the Poincar\'e(-Wirtinger) inequality, it is extremely useful to have a good lower estimate of $\lambda_{1}$. 

  For domains in the Euclidean space, classical estimates of the first eigenvalue of the Laplacian (under Dirichlet or Neumann boundary conditions) date back to Lord Rayleigh \cite{Ray},  Faber \cite{Fa23}, Krahn \cite{Kr25}, Polya-Szego \cite{PS51}, Payne-Weinberger \cite{PW}, among others. For curved spaces, two major results are due to Lichnerowicz \cite{Lichn} and Obata \cite{Obata}:
  
 \begin{theorem}\label{thm:LichnObata}
 Let $(M,g)$ be an $N$-dimensional Riemannian manifold with $\Ric_{g}\geq (N-1) g$. 
 \\Then $\lambda_{1}\geq N$ (Lichnerowicz spectral gap \cite{Lichn}). 
 \\Moreover,  $\lambda_{1}= N$ if and only if $(M,g)$ is  isometric to the unit sphere $\mathbb S^{N}$ (Obata's Theorem \cite{Obata}).
 \end{theorem}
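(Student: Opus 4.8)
The plan is to deduce the spectral gap from Bochner's identity and then read off the rigidity from the equality case. Throughout, let $u\in C^\infty(M)$ be a first eigenfunction, $\Delta u=-\lambda_1 u$, normalized by $\int_M u^2\di\mathrm{vol}_g=1$; recall that then $\int_M u\di\mathrm{vol}_g=0$ and $\int_M|\nabla u|^2\di\mathrm{vol}_g=\lambda_1$, and that $M$ is closed (the rigidity conclusion forces this).

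\textbf{Step 1 (Lichnerowicz gap).} Bochner's formula reads
\[
\tfrac12\Delta|\nabla u|^2=|\Hess u|^2+\langle\nabla u,\nabla\Delta u\rangle+\Ric_g(\nabla u,\nabla u).
\]
Integrating over $M$ annihilates the left-hand side. Using $\langle\nabla u,\nabla\Delta u\rangle=-\lambda_1|\nabla u|^2$, the Cauchy--Schwarz bound $|\Hess u|^2\ge(\Delta u)^2/N=\lambda_1^2 u^2/N$, and the curvature hypothesis $\Ric_g(\nabla u,\nabla u)\ge(N-1)|\nabla u|^2$, one obtains
\[
0\ \ge\ \frac{\lambda_1^2}{N}-\lambda_1^2+(N-1)\lambda_1,
\]
which after dividing by $\lambda_1>0$ rearranges to $\lambda_1\ge N$.

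\textbf{Step 2 (Equality forces $\Hess u=-u\,g$, and a distance ODE).} If $\lambda_1=N$ all the preceding inequalities are equalities, pointwise by smoothness; equality in Cauchy--Schwarz for the traceless part of the Hessian yields the rigid identity
\[
\Hess u=\frac{\Delta u}{N}\,g=-u\,g .
\]
Normalize $u$ so that $\max_M u=1$ and fix $p$ with $u(p)=1$, hence $\nabla u(p)=0$. Along any unit-speed geodesic $\gamma$ issuing from $p$, the function $\phi(t):=u(\gamma(t))$ solves $\phi''=\Hess u(\dot\gamma,\dot\gamma)=-\phi$ with $\phi(0)=1$, $\phi'(0)=0$, so $\phi(t)=\cos t$. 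By completeness every point lies on such a geodesic, so $u=\cos(\sfd_g(p,\plchldr))$ wherever $\sfd_g(p,\plchldr)\le\pi$, which is everywhere by Bonnet--Myers; in particular $u$ attains its minimum $-1$ at the unique point $q$ with $\sfd_g(p,q)=\pi$, the only critical points of $u$ are $p$ and $q$, and $\diam(M,g)=\pi$.

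\textbf{Step 3 (Reconstruction of the round sphere) --- the main obstacle.} It remains to promote the pointwise identity $\Hess u=-u\,g$ to an isometry $(M,g)\cong\mathbb S^N$. In geodesic polar coordinates $(t,\theta)$ around $p$ via $\exp_p$ one has $u=\cos t$, and writing $\nabla u=-\sin t\,\nabla t$ and $\Hess u=-\cos t\,\nabla t\otimes\nabla t-\sin t\,\Hess t$, the identity $\Hess u=-\cos t\,g$ forces the second fundamental form of the geodesic spheres $\{t=\text{const}\}$ to be $(\cot t)$ times the induced metric; solving the associated Riccati/Jacobi equation with the correct initial behaviour as $t\to 0$ then pins the metric to the warped form $g=\di t^2+\sin^2 t\,g_{\mathbb S^{N-1}}$ on $(0,\pi)\times\mathbb S^{N-1}$ and shows that $\exp_p$ is a diffeomorphism onto $\mathbb S^N\setminus\{q\}$, i.e.\ $(M,g)$ is the round sphere. (Alternatively, $\diam(M,g)=\pi$ together with $\Ric_g\ge(N-1)g$ permits a direct appeal to Cheng's maximal diameter theorem.) The delicate point is precisely this last step: one must integrate $\Hess u=-u\,g$ in the directions transverse to $\nabla u$ --- controlling the metric on the level sets $\{u=\cos t\}$ and ruling out any conjugate or cut point of $p$ strictly before distance $\pi$ --- whereas the Lichnerowicz half of the statement is, by contrast, a one-line integration of Bochner's formula.
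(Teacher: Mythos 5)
This statement is quoted in the paper as classical background, with citations to Lichnerowicz and Obata, and is never proved there; the paper's own machinery (localization along $L^1$-transport rays) would only recover it as the rigid case of the quantitative theorems, and in the non-smooth setting the equality case yields spherical suspensions rather than only $\mathbb S^N$. So your Bochner-formula argument is necessarily a different route, and it is the standard smooth one: Step 1 is correct as a one-line integration of Bochner's identity, and Step 2 correctly extracts $\Hess u=-u\,g$ from equality in the trace Cauchy--Schwarz inequality. Two points should be tightened. First, in Step 2 you assert that $u$ attains the value $-1$ at a point $q$ with $\sfd_g(p,q)=\pi$; as written, $u=\cos\sfd_g(p,\cdot)$ only gives $\min u=\cos\big(\max_x\sfd_g(p,x)\big)$, and you have not yet shown that the eccentricity of $p$ equals $\pi$. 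The standard fix is the first integral $E:=|\nabla u|^2+u^2$: since $\nabla E=2\Hess u(\nabla u,\cdot)^{\sharp}+2u\nabla u=-2u\nabla u+2u\nabla u=0$, one has $E\equiv E(p)=1$, so at any minimum point $q$ of $u$ (where $\nabla u=0$) necessarily $u(q)^2=1$, hence $u(q)=-1$ and $\sfd_g(p,q)=\pi$. Second, Step 3 is a sketch, but an honest one, and either completion you name is legitimate: the direct route must verify via the Jacobi/Riccati equation forced by $\Hess t=\cot t\,(g-\di t\otimes\di t)$ that $\exp_p$ has no conjugate or cut point before $t=\pi$, while the appeal to Cheng's maximal diameter theorem is immediate once $\diam(M,g)=\pi$ is established as above. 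The converse direction, that $\mathbb S^N$ actually realizes $\lambda_1=N$, is not addressed, but that is the explicit computation recorded in \autoref{rem:EigenfunctSN}. Overall the proposal is correct modulo these routine completions.
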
 
  
 \begin{remark}\label{rem:EigenfunctSN}
 On  $\mathbb S^{N}$, the first eigenvalue $\lambda_{1}=N$ has multiplicity $N+1$. The corresponding eigenspace is spanned by the restriction to $\mathbb S^{N}$ of affine functions of $\mathbb R^{N+1}$ (i.e. an $L^{2}$-orthogonal basis is composed by  the  standard coordinate functions $\{x^{1}, x^{2}, \ldots, x^{N+1}\}$ of $\mathbb R^{N+1}$). Equivalently, a function $u: {\mathbb S}^{N}\to \R$ is a first eigenfunction normalized as $\|u\|_{L^{2}({\mathbb S^{N}})}=1$ if and only if there exists $P\in  {\mathbb S}^{N}$  such that $u= \sqrt{N+1} \, \cos \sfd_{P}$, where we denoted by $\sfd_P$ the Riemannian distance from the point $P$.
 \end{remark} 
 
 Our main result is a \emph{quantitative spectral gap} involving the shape of the eigenfunctions (or, more generally, of functions with almost optimal Rayleigh quotient), when compared with the  eigenfunctions of the model space  ${\mathbb S}^{N}$ (as in \autoref{rem:EigenfunctSN}). In detail, we show that if $\Ric_{g}\geq (N-1) g$ and  $u:M\to \R$ is a first eigenfunction with $\|u\|_{L^{2}(M)}=1$, then there exists $P\in M$ such that
 \begin{equation}\label{eq:ucosW12}
 \|u- \sqrt{N+1} \cos\sfd_{P}\|_{L^{2}(M)} \leq C(N) (\lambda_{1}-N)^{{\rm O}(1/N)}.
 \end{equation}
 More generally, the same conclusion holds for every Lipschitz function $u:M\to \R$ with null mean value and $\|u\|_{L^{2}(M)}=1$, provided $\lambda_{1}$ on the right-hand-side is replaced by the Dirichlet energy $\int_{M} |\nabla u|^{2} d{\rm vol}_{g}$.
 \\

We will prove \eqref{eq:ucosW12} with tools of optimal transport tailored to study (possibly non-smooth) metric measure spaces satisfying Ricci curvature lower bounds and dimensional upper bounds in synthetic sense, the so-called $\CD(K,N)$ spaces introduced by   Sturm \cite{sturm:I, sturm:II} and Lott-Villani  \cite{lottvillani:metric}. For the sake of this introduction, a metric measure space (m.m.s. for short) is a triple $(X,\sfd, \mm)$ where $(X,\sfd)$ is a compact metric space and $\mm$ is a Borel probability measure, playing the role of reference volume measure. A $\CD(K,N)$ space should be roughly thought of as a possibly non-smooth metric measure space having Ricci curvature bounded below by $K\in \R$ and dimension bounded above by $N\in (1,\infty)$ in synthetic sense. The basic idea of Lott-Sturm-Villani synthetic approach is to analyse weighted convexity properties of suitable entropy functionals along geodesics in the space of probability measures endowed with the quadratic transportation (also known as Kantorovich-Wasserstein) distance. An important technical assumption throughout the paper is the essentially non-branching (``e.n.b.'' for short) property \cite{RS2014}, which roughly corresponds to requiring that the $L^{2}$-optimal transport between two absolutely continuous (with respect to the reference volume measure $\mm$) probability measures is performed along geodesics which do not branch (for the precise definitions see \autoref{s:W2}  and \autoref{Ss:geom}). 
Notable examples of spaces satisfying e.n.b. $\CD(K,N)$  include  (geodesically convex domains in) smooth Riemannian manifolds with Ricci bounded below by $K$ and dimension bounded above by $N$, their measured-Gromov-Hausdorff limits (i.e. the so-called ``Ricci limits'') and more generally $\RCD(K,N)$ spaces (i.e. $\CD(K,N)$ spaces with linear Laplacian, see \autoref{rem:RCD} for more details), finite dimensional Alexandrov spaces with curvature bounded below, Finsler manifolds endowed with a strongly convex norm. A standard example of a space failing to satisfy the essentially non-branching property is $\R^{2}$ endowed with the $L^{\infty}$ norm.   Later in the introduction, when discussing the main steps of the proof, we will mention how the essentially non-branching assumption is used in our arguments.
\\

We will establish  our results directly on the more general class of e.n.b. $\CD(N-1,N)$ metric measure spaces.  For a m.m.s. $(X,\sfd,\mm)$ we define the non-negative real number $\lambda^{1,2}_{(X,\sfd,\mm)}$ as follows
\begin{equation}\label{eq:defLambda12}
\lambda_{(X,\sfd,\mm)}^{1,2} 
: = \inf \left\{ \frac{\int_{X} |\nabla u |^{2} \, \mm}{\int_{X} |u|^{2}\,\mm} \colon u \in \Lip(X) \cap L^{2}(X,\mm), \ u \neq 0, \ \int_{X} u \, \mm  = 0\right\},
\end{equation}
where $|\nabla u|$ is the slope (also called local Lipshitz constant) of the Lipschitz function $u$ given by 
$$|\nabla u|(x)=\limsup_{y\to x} \frac{|u(x)-u(y)|}{\sfd(x,y)} \quad  \text{if $x$ is not isolated, $0$ otherwise}. $$
It is well known that, in case $(X,\sfd,\mm)$ is the m.m.s. corresponding to a smooth compact Riemannian manifold (possibly with boundary), then  
$\lambda^{1,2}_{(X,\sfd,\mm)}$ coincides with the first  eigenvalue of the problem $-\Delta u= \lambda u$ with Neumann boundary conditions.
\\

Considering the extension of   \eqref{eq:ucosW12} to e.n.b. $\CD(N-1,N)$ spaces  is natural: indeed a sequence $(M_{j}, g_{j})$ of Riemannian $N$-manifolds with $\Ric_{g_{j}}\geq (N-1) g_{j}$ where the right hand side of  \eqref{eq:ucosW12} converges to zero as $j\to \infty$  may develop singularities and admits a limit (up to subsequences) in the measured-Gromov-Hausdorff sense to a possibly non-smooth e.n.b. $\CD(N-1,N)$ space (actually the limit is, more strongly, $\RCD(N-1,N)$).
\\

In the enlarged class of e.n.b. $\CD(N-1,N)$ spaces (actually already for  $\RCD(N-1,N)$ spaces), Obata's rigidity Theorem must be modified: 
\begin{itemize}
\item First of all, $N\in (1,\infty)$ is a (possibly non integer) real number; 
\item Even in the case of integer $N$, the round sphere $\mathbb S^{N}$ is not anymore the only case of equality in the Lichnerowicz spectral gap as the spherical suspensions achieve equality as well \cite{Ketterer15}. 
\end{itemize}
A key geometric property of the spherical suspensions is that they have diameter $\pi$, thus saturating Bonnet-Myers diameter upper bound. The first part of our main result is a quantitative control of how close to $\pi$ the diameter must be,  in terms of the spectral gap deficit. The second part of the statement is an $L^{2}$-quantitative control of the shape of functions with almost optimal Rayleigh quotient. We can now state our main theorem.

\begin{theorem} [Quantitative Obata's Theorem for e.n.b. $\CD(N-1,N)$-spaces]\label{thm:quantobataIntro}
For every real number  $N>1$ there exists  a real constant  $C(N)>0$  with the following properties:
if $(X,\dist,\meas)$ is an essentially non branching metric measure space satisfying the  $\CD(N-1,N)$ condition and $\mm(X)=1$ with $\supp(\mm)=X$, then
\begin{equation}\label{eq:diamXpiIntro}
\pi-\diam(X) \leq C(N) \big(\lambda_{(X,\sfd,\mm)}^{1,2} -N \big)^{1/N}.
\end{equation}
Moreover, for any Lipschitz function $u:X\to \R$ with $\int_X u \, \mm=0$ and $\int_X u^2 \,\mm=1$, there exists a distinguished point $P\in X$ such that
\begin{equation}\label{eq:ucosdpL2}
\begin{split}
\norm{u-\sqrt{N+1}\cos \dist_{P}}_{L^2(X,\meas)} &\le C(N) \left(\int_X\abs{\nabla u}^2\mm-N\right)^{\eta}  \\
\eta&=\frac{1}{6N+4}
\end{split}.
\end{equation}
\end{theorem}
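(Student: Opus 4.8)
The diameter bound \eqref{eq:diamXpiIntro} will come out as a by-product: running the analysis below on a minimizing sequence for \eqref{eq:defLambda12} produces transport rays of length arbitrarily close to $\pi$, and $\diam(X)$ dominates the length of any ray, the exponent $1/N$ being the one furnished by the one-dimensional density estimates. So fix a Lipschitz $u$ with $\int_X u\,\mm=0$, $\int_X u^2\,\mm=1$, and set $\delta:=\int_X|\nabla u|^2\,\mm-N$ (we re-derive $\delta\ge 0$ below). The plan is to apply the localization/needle decomposition adapted to $u$: since $u$ has zero mean one obtains a set $\mathcal T\subseteq X$ carrying $\mm$, with $u\equiv 0$ $\mm$-a.e.\ on $X\setminus\mathcal T$, a partition of $\mathcal T$ into transport rays $X_\alpha$, and a disintegration $\mm\restr\mathcal T=\int_Q\mm_\alpha\,\qq(d\alpha)$ where $\mm_\alpha=h_\alpha\haus^1\restr X_\alpha$, each $X_\alpha$ a geodesic of length $\ell_\alpha\le\pi$, each $h_\alpha$ satisfying the one-dimensional $\CD(N-1,N)$ inequality, $\int_{X_\alpha}u\,\mm_\alpha=0$, and --- this is where essential non-branching enters, together with the fact that the decomposition follows the monotonicity rays of $u$ --- $\int_X|\nabla u|^2\,\mm=\int_Q\big(\int_{X_\alpha}|u'|^2\,\mm_\alpha\big)\qq(d\alpha)$ and $\int_X u^2\,\mm=\int_Q\big(\int_{X_\alpha}u^2\,\mm_\alpha\big)\qq(d\alpha)$, with $u'$ the derivative of $u$ along the ray.

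\emph{One-dimensional quantitative Obata.} The technical core is an effective Payne--Weinberger/Klartag inequality: there are $\sigma_0(N)>0$ and $C(N)$ so that if $h$ is a $\CD(N-1,N)$ probability density on an interval $I$ of length $\ell$ and $v\in\Lip(I)$ has $\int_I v\,h=0$, $\int_I v^2\,h=1$, $\int_I(v')^2\,h\le N+\sigma$ with $\sigma\le\sigma_0$, then $\pi-\ell\le C(N)\sigma^{1/N}$, one of the endpoints $x_0$ of $I$ satisfies $\|v-\sqrt{N+1}\cos\dist(\plchldr,x_0)\|_{L^2(h)}^2\le C(N)\sigma^{1/(2N+1)}$, and $h$ is close to the model density $c_N\sin^{N-1}$ in the corresponding quantitative sense. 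Its proof is one-dimensional and self-contained: the sharp bound $\int(v')^2h\ge N\int v^2h$ (equality only on $[0,\pi]$, model density, $v\propto\cos$) is classical; one quantifies the two independent ways it can be nearly saturated --- $\ell<\pi$, and $(h,v)$ far from the model on an interval of length $\approx\pi$ --- by Sturm/Riccati comparison for the density combined with the eigenfunction ODE, keeping track of error terms. The exponent losses ($1/N$ from the volume-type comparison at the degenerate endpoints, a further halving in passing from pointwise ODE control to an $L^2$ bound) produce the $2N+1$.

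\emph{Slicing.} Put $m_\alpha:=\int_{X_\alpha}u^2\,\mm_\alpha$ and $\delta_\alpha:=\int_{X_\alpha}|u'|^2\,\mm_\alpha-Nm_\alpha$; the sharp one-dimensional bound gives $\delta_\alpha\ge 0$, and integration yields $\int_Q m_\alpha\,\qq=1$ and $\int_Q\delta_\alpha\,\qq=\delta$, so $\delta\ge 0$ (Lichnerowicz). Declare a ray \emph{good} when $\sigma_\alpha:=\delta_\alpha/m_\alpha\le\sigma_0(N)$; by Chebyshev the $u^2$-mass on bad rays is $\le\delta/\sigma_0$, so good rays carry almost all of the $L^2$ norm when $\delta$ is small. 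Rescaling $u|_{X_\alpha}$ by $m_\alpha^{-1/2}$ and applying the lemma, each good ray has an endpoint $P_\alpha$ and an amplitude $c_\alpha$ with $c_\alpha^2=(N+1)m_\alpha$, $\|u-c_\alpha\cos\dist_{P_\alpha}\|_{L^2(\mm_\alpha)}^2\le C(N)\,m_\alpha\,\sigma_\alpha^{1/(2N+1)}$ and $\ell_\alpha\ge\pi-C(N)\sigma_\alpha^{1/N}$.

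\emph{Globalization --- the main obstacle.} It remains to upgrade this ray-by-ray picture to a single point $P$ and the single amplitude $\sqrt{N+1}$; this is the crux. One must show: (i) on good rays, $c_\alpha$ (equivalently $m_\alpha$) is uniformly close to $\sqrt{N+1}$ (resp.\ to $1$) --- here $\int_Q m_\alpha\,\qq=1$ combines with the fact that the rays tile $\mathcal T$ (disjointness is exactly essential non-branching) and with the density/length comparison on good rays, which makes $\int_{X_\alpha}\cos^2\dist_{P_\alpha}\,\mm_\alpha$ uniformly close to $\tfrac1{N+1}$, to force $m_\alpha\to 1$, and in particular $\|u\|_{L^\infty}\le\sqrt{N+1}+o(1)$; (ii) the base points $P_\alpha$, being near-maximum points of $u$, all lie in the superlevel set $\{u\ge\sqrt{N+1}-\eps\}$, which has $\mm$-measure $O(\eps^{N/2})$ by the almost-cosine shape, and which --- using the diameter bound already at our disposal together with Bishop--Gromov and connectedness in the $\CD(N-1,N)$ space --- must have diameter $O(\eps^{1/2})$, pinning all $P_\alpha$ to one ball $B_\rho(P)$ with $\rho$ a power of $\delta$; (iii) the remaining rays carry a negligible fraction of $\mm$ as well, a consequence of the diameter estimate and the volume comparison (the good rays, emanating from near $P$, already fill up almost all of the mass). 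Inserting all of this into $\int_X(u-\sqrt{N+1}\cos\dist_P)^2\,\mm=\int_Q\int_{X_\alpha}(u-\sqrt{N+1}\cos\dist_P)^2\,\mm_\alpha\,\qq$, using $|\cos\dist_P-\cos\dist_{P_\alpha}|\le\dist(P,P_\alpha)\le\rho$ pointwise on good rays and the crude bound $(u-\sqrt{N+1}\cos\dist_P)^2\le 2u^2+2(N+1)$ on the rest, and then optimizing the thresholds and exponents --- two Cauchy--Schwarz steps each roughly halving $1/(2N+1)$, which is how $8N+4$ emerges --- yields \eqref{eq:ucosdpL2}. The one-dimensional lemma, though the technical workhorse, follows a familiar pattern; I expect the globalization, especially the uniform control of $m_\alpha$ and the confinement of the base points, to be the genuinely delicate part.
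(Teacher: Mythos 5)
Your skeleton (localization into $\CD(N-1,N)$ needles with localized zero-mean constraint, a one-dimensional quantitative Obata lemma, Chebyshev to isolate good/long rays) is the paper's skeleton, and you correctly flag the globalization as the crux. But the mechanisms you propose for the three globalization steps do not work, and the actual key idea of the paper is missing. For (i): the constraint $\int_Q m_\alpha\,\qq=1$ controls only the \emph{average} of $m_\alpha$, and the observation that $\int_{X_\alpha}\cos^2\dist_{P_\alpha}\,\mm_\alpha\approx\frac{1}{N+1}$ is amplitude-independent — it is consistent with $c_\alpha$ taking wildly different values on different rays as long as the average of $c_\alpha^2$ is $1$. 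Nothing you write rules this out; indeed the paper states explicitly that "at this stage the only given information is that $\int_{Q_\ell}c_q^2\,\qq\simeq 1$" and that forcing $q\mapsto c_q$ to be almost constant is the main technical difficulty. The paper's mechanism (\autoref{prop:varcqlongrays}) is to apply the weak local $(2,2)$-Poincar\'e inequality on a small ball $B_r(P_N)$ centred at the common north pole with $r=\delta^{\gamma/N}$: there $u\approx c_q\sqrt{N+1}\cos\approx c_q\sqrt{N+1}$ on the ray $X_q$, so $\int_{B_r(P_N)}|u-\fint u|^2\,\mm$ dominates $\mm_N([0,r])\cdot\mathrm{Var}(c_q)$, while $\int_{B_{10r}(P_N)}|\nabla u|^2\,\mm\lesssim r^N(r^2+\delta^{\mathrm{power}})$ because $|\nabla u|$ is quantitatively aligned with $|u_q'|\approx c_q\sqrt{N+1}\sin\lesssim r$ near the pole (this uses \eqref{eq:nablau2-uq'2delta}); balancing the two sides kills the variance. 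This is the idea your proposal lacks, and without it neither (i) nor (iii) can be closed: $\int_{Q_\ell}c_q^2\,\qq\approx 1$ does not give $\qq(Q_\ell)\approx 1$ unless you already know $c_q$ is essentially constant.

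Step (ii) is also unsound as written. The $L^2(\mm_\alpha)$-closeness of $u_\alpha$ to $c_\alpha\sqrt{N+1}\cos$ gives no pointwise information at the endpoint $P_\alpha$, where the density $h_\alpha\sim t^{N-1}$ degenerates, so you cannot place $P_\alpha$ in a superlevel set of $u$; moreover a subset of small $\mm$-measure in a $\CD(N-1,N)$ space need not have small diameter (Bishop--Gromov bounds ball volumes from below, but a superlevel set of a Lipschitz function with uncontrolled Lipschitz constant need not contain balls of definite radius). The paper pins the endpoints by a purely transport-theoretic argument (\autoref{prop:PNPS}): the $\sfd$-cyclical monotonicity of the transport set applied to two long rays gives $\pi-\dist(a(X_q),P_S)+\pi-\dist(b(X_q),P_N)\le 2C_N\delta^{\beta/N}$, and a triangle-comparison lemma from \cite{CMM} then forces all starting points within $C(N)\delta^{\beta/N}$ of a single $P_N$. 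Finally, a minor point: the diameter bound \eqref{eq:diamXpiIntro} does not require any of this machinery — it follows directly from localization plus the one-dimensional spectral gap improvement $\lambda^{1,2}\ge NC_{N,D}^2$ applied needle by needle (\autoref{thm:quantitativeObata}), not from "running the analysis on a minimizing sequence".
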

 
 \begin{remark}
Although \autoref{thm:quantobataIntro} is formulated for e.n.b. $\CD(N-1,N)$ spaces, a statement for  e.n.b. $\CD(K,N)$ spaces with $K>0$ is easily obtained by scaling. Indeed, $(X,\sfd,\mm)$ satisfies $\CD(K,N)$ if and only if, for any $\alpha, \beta\in (0,\infty)$, the scaled metric measure space $(X,\alpha \sfd, \beta \mm)$ satisfies $\CD(\alpha^{-2} K, N)$; see \cite[Proposition 1.4]{sturm:II}.
 \end{remark}
 
Let us compare \autoref{thm:quantobataIntro} with related results in the literature. Under the standing assumption that $(M,g)$ is a \emph{smooth} Riemannian $N$-manifold \emph{without boundary} and with $\Ric_{g}\geq (N-1)g$:
 \begin{enumerate}
\item  It follows from Cheng's Comparison Theorem \cite{Cheng75} that if $\lambda^{1,2}_{(M,g)}$ is close to $N$ then the diameter of $M$ must be close to $\pi$. Conversely, Croke \cite{Croke82} proved that if the diameter is close to $\pi$ then $\lambda^{1,2}_{(M,g)}$ must be close to $N$. B\'erard-Besson-Gallot \cite{BerardBessonGallot85}  sharpened the diameter estimate of Cheng by proving an estimate very similar to \eqref{eq:diamXpiIntro}. 
\item Bertrand \cite{bertrand07} established the following stability result for eigenfunctions (see also the prior work of Petersen \cite{Pet99}): for every $\varepsilon>0$ there exists $\delta>0$ such that if $\lambda_{1}\leq N+\delta$ and $u$ is an eigenfunction relative to $\lambda_{1}$ normalized so that $\int_{M} u^{2} d{\rm vol}_{g}= {\rm vol}_{g}(M)$, then there exists a point $P\in M$ such that $\|u-\sqrt{N+1}\cos \sfd_{P} \|_{L^{\infty}(X,\mm)} \leq \varepsilon$. 
\end{enumerate}
\autoref{thm:quantobataIntro} sharpens and extends the above results in various ways:
\begin{itemize}
\item The estimate \eqref{eq:diamXpiIntro} extends  \cite{BerardBessonGallot85}  to e.n.b. $\CD(N-1,N)$ spaces. These spaces are non-smooth a priori and may have (convex) boundary.
 Actually, as the reader will realize, the claim \eqref{eq:diamXpiIntro} will be set in \autoref{S:Obatadiam}
along the way of proving the much harder \eqref{eq:ucosdpL2}, to which the entire \autoref{sec:QuantObataFuct} is devoted.
\item The estimate \eqref{eq:ucosdpL2} extends Bertrand's \cite{bertrand07} stability to the more general  class of e.n.b. $\CD(N-1,N)$ spaces and to arbitrary functions (a priori not eigenfunctions) with Rayleigh quotient close to $N$. The fact that $u$ is an eigenfunction was key in  \cite{bertrand07} in order to apply maximum principle and gradient estimates in the spirit of Li-Yau \cite{LiYau}. Let us stress that our methods are completely different and work for an arbitrary Lipschitz function satisfying a \emph{small energy} condition but \emph{no PDE} a-priori.
\end{itemize}

Inequality  \eqref{eq:ucosdpL2} naturally fits in the framework of quantitative functional/geometric inequalities. A basic result in this context is the quantitative euclidean  isoperimetric inequality  proved by Fusco-Maggi-Pratelli \cite{FuMaPr} (see also  \cite{FiMaPr,CiLe} for different proofs),  stating that for every Borel set $E\subset\R^n$ of positive and finite volume  there exists $\bar{x}\in\R^n$ such that
\begin{equation}
  \label{eq:EII}
  \frac{|E\Delta B_{r_E}(\bar{x})|}{|E|}\le C(N)\,\Big(\frac{\PP(E)}{\PP(B_{r_E}(\bar{x}))}-1\Big)^{1/2}
\end{equation}
where $r_E$ is such that $|B_{r_E}(\bar{x})|=|E|$. Quantitative results involving the spectrum of the Laplacian have been proved for domains in $\R^{n}$,  among others, by Hansen-Nadirashvili \cite{HaNa} in dimension 2,  Melas \cite{Mel} for convex bodies, Fusco-Maggi-Pratelli \cite{FuMaPr2}, Brasco-De Philippis-Velichkov \cite{BrDepVel} regarding quantitative forms of the Faber-Krahn inequality and by Brasco-Pratelli \cite{BrPr} regarding quantitative versions of the Krahn-Szego and Szego-Weinberger inequalities. More recently, a quantitative version of the L\'evy-Gromov isoperimetric inequality has been proved for essentially non branching $\CD(N-1,N)$ metric measure spaces in \cite{CMM}, and a quantitative isoperimetric inequality in the setting of smooth Riemannian manifolds has been considered in \cite{Chodosh19}.

Taking variations in the broad context of metric measure spaces makes the prediction on the sharp exponent $\eta$ in \eqref{eq:ucosdpL2} a hard task. Even formulating a conjecture is  a challenging question and 
it could actually be that $\eta={\rm O}(1/N)$ as $N\to\infty$ is already sharp. In the direction of this guess, we notice that the exponent $1/N$ in \eqref{eq:diamXpiIntro} is indeed optimal in the class of metric measure spaces, as a direct computation on the model 1-dimensional space $([0,D], |\cdot|, \sin^{N-1}(\cdot) \L^{1})$ shows.

Before discussing the main steps in the proof of \autoref{thm:quantobataIntro}, it  is worth recalling remarkable examples of spaces fitting in the assumptions of the result. Let us stress that our main theorem seems new  in all of them.
The class of essentially non branching  $\CD(N-1,N)$ spaces includes many notable families of spaces, among them:
\begin{itemize}
\item \emph{Geodesically convex domains} in (resp. weighted) Riemannian $N$-manifolds satisfying ${\Ric}_g\ge (N-1)g$   (resp. $N$-Bakry-\'Emery Ricci curvature bounded below by $N-1$).  
\item \emph{Measured Gromov Hausdorff limits of Riemannian $N$-manifolds  satisfying  ${\Ric}_g\ge (N-1)g$} (so called ``Ricci limits'') and more generally the class of $\RCD(N-1,N)$ spaces.
Indeed Ricci limits  are examples of $\RCD(N-1,N)$ spaces (see for instance \cite{GMS2013}) and $\RCD(N-1,N)$ spaces are essentially non-branching $\CD(N-1,N)$ (see \cite{RS2014}).
\item \emph{Alexandrov spaces with curvature $\geq 1$}.
Petrunin \cite{PLSV} proved that the synthetic curvature lower bound in the sense of comparison triangles is compatible with the optimal transport lower bound on the Ricci curvature of Lott-Sturm-Villani (see also \cite{zhangzhu}).  Moreover  geodesics in Alexandrov spaces with curvature bounded below do not branch. It follows that  Alexandrov spaces with curvature bounded from below by $1$ and Hausdorff dimension at most $N$ are non-branching  $\CD(N-1,N)$ spaces.
\item \emph{Finsler manifolds with strongly convex norm, and satisfying Ricci curvature lower bounds.} More precisely we consider a $C^{\infty}$-manifold  $M$, endowed with a function $F:TM\to[0,\infty]$ such that $F|_{TM\setminus \{0\}}$ is $C^{\infty}$ and  for each $x \in M$ it holds that $F_x:=T_{x}M\to [0,\infty]$ is a  strongly-convex norm, i.e.
$$\qquad \quad \; g^x_{ij}(v):=\frac{\partial^2 (F_x^2)}{\partial v^i \partial v^j}(v) \quad \text{is a positive definite matrix at every } v \in T_xM\setminus\{0\}. $$
Under these conditions, it is known that one can write the  geodesic equations and geodesics do not branch: in other words these spaces are non-branching.
We also assume $(M,F)$ to be geodesically complete and endowed with a $C^{\infty}$ probability measure $\mm$ in  such a way that the associated m.m.s. $(X,F,\mm)$ satisfies the $\CD(N-1,N)$ condition. This class of spaces has been investigated by Ohta \cite{Ohta} who established the equivalence between the Curvature Dimension condition and a Finsler-version of Bakry-Emery $N$-Ricci tensor bounded from below.
\end{itemize}

\subsection*{An overview of the proof}
The starting point of the proof of \autoref{thm:quantobataIntro} is the metric measured version of the classical {\it localization technique}. First introduced by Payne-Weinberger \cite{PW} for establishing a sharp Poincar\'e-Wirtinger inequality for convex domains in $\R^{n}$, the localization technique has been developed into a general dimension reduction tool for geometric inequalities in symmetric spaces by Gromov-Milman \cite{GrMi}, Lov\'asz-Simonovits \cite{LoSi} and Kannan-Lov\'asz-Simonovits \cite{KaLoSi}. More recently, Klartag  \cite{klartag} used optimal transportation tools in order to extend the range of applicability of the techique to general Riemannian manifolds. The extension to the metric setting was finally obtained in \cite{CavallettiMondino17a}, see \autoref{Ss:localization}.

Given a function $u\in L^{1}(X,\mm)$ with $\int_{X} u \, \mm=0$, the localization theorem (\autoref{T:localize}) gives a decomposition of $X$ into a family of one-dimensional sets  $\{X_{q}\}_{q \in Q}$ formed by the transport rays of a Kantorovich potential associated to the optimal transport from the positive part of $u$ (i.e. $\mu_{0}:=\max\{u,0\} \, \mm$) to the negative part of $u$  (i.e. $\mu_{1}:=\max\{-u,0\}\, \mm$); each $X_{q}$ 
is in particular isometric to a real interval.  A first key property of such a decomposition is that each ray $X_{q}$ carries a  natural measure $\mm_{q}$ (given by the the Disintegration Theorem) in such a way that
\begin{equation}\label{eq:localizedGeomConstr}
 \text{$(X_{q}, \sfd, \mm_{q})$ is a $\CD(N-1,N)$ space and $\int_{X_{q}} u \, \mm_{q}=0$}, 
 \end{equation}
 so that both the geometry of the space and the null mean value constraint  are \emph{localized} into a family of one-dimensional spaces. 
An important ingredient used in the proof of such a decomposition is the essentially non-branching property  which, coupled with $\CD(N-1,N)$ (actually the weaker measure contraction would suffice here), guarantees that the rays form a partition of $X$ (up to an $\mm$-negligible set).   

In order to exploit \eqref{eq:localizedGeomConstr}, as a first step, in \autoref{S:1d} we prove the one dimensional counterparts of \autoref{thm:quantobataIntro}. More precisely, given a  $1$-dimensional $\CD(N-1,N)$ space $(I=[0,D], |\cdot|, \mm)$ we show that (\autoref{thm:improvedspectralNeumann1D})
\begin{equation}\label{eq:almMaxDiam1D}
\pi-D\le  C(N) (\lambda^{1,2}_{(I,|\cdot|,\meas)}-N)^{1/N}, 
\end{equation}
 and that, if  $u\in\Lip(I)$ satisfies $\int u \,\mm=0$ and $\int u^2 \,\mm=1$, then  (\autoref{thm:mainthm1d})
\begin{equation}\label{eq:thm1dIntro}
\min\left\lbrace\norm{u-\sqrt{N+1}\cos(\cdot)}_{L^{2}(\meas)},\norm{u+\sqrt{N+1}\cos(\cdot)}_{L^{2}(\meas)} \right\rbrace\le C\left(\int \abs{u'}^2\mm-N  \right)^{\min\left\lbrace \frac{1}{2}, \frac{1}{N}\right\rbrace }.
\end{equation}
Combining \eqref{eq:localizedGeomConstr} and  \eqref{eq:almMaxDiam1D} it is not hard to prove (see  \autoref{thm:improvedspectralNeumann}) the first claim \eqref{eq:diamXpiIntro} of \autoref{thm:quantobataIntro}. Actually,  calling $Q_{\ell}$  (for ``$Q$ long'') the set of indeces for which  $|X_{q}|\simeq \pi$, we aim to show that $\qq(Q_{\ell})\simeq 1$  (i.e. ``most rays are long'').  As we will discuss in a few lines, this is far from being trivial (in particular, it needs new ideas when compared with \cite{CMM}).

 A second crucial property of the decomposition $\{X_{q}\}_{q \in Q}$, inherited by the variational nature of the construction, is the so-called cyclical monotonicity. This was key in \cite{CMM} for showing that, for $q\in Q_{\ell}$,  the transport ray $X_{q}$ has its starting point close to a fixed ``south pole'' $P_{S}$, and ends-up nearby a fixed ``north pole'' $P_{N}$ (in particular, the distance between $P_{S}$ and $P_{N}$ is close to $\pi$) (\autoref{prop:PNPS}).

 Then  we  observe that \eqref{eq:thm1dIntro}  forces, for  $q\in Q_{\ell}$, the fiber  $u_{q}:=u\llcorner X_{q}$ (that is the restriction of $u$ to the corresponding one dimensional  element of the partition) to be  $L^{2}$ close to a multiple of the cosine of the arclength parametrization along the ray $X_{q}$, i.e. 
\begin{equation} \label{eq:uqcqcos}
u_{q}(\cdot)  \simeq c_{q} \sqrt{N+1}\cos(\cdot)\; \text{along $X_{q}$, where  } c_{q}=\|u_{q}\|_{L^{2}(\mm_{q}),}\;  \text{ for } q\in Q_{\ell} \;\text{ (see \eqref{eq:1Duqcosdq}).}
\end{equation}
The difficulties in order to conclude the proof are mainly two, and are strictly linked:
\begin{enumerate}
\item  Show that $Q_{\ell}\ni q\mapsto c_{q}$ is almost constant;
\item Show that   $\qq(Q_{\ell}) \simeq 1$.
\end{enumerate}
Let us stress that at this stage the only given information is that $\int_{Q_{\ell}} c^{2}_{q}\, \qq \simeq 1$.  The intuition why 1. and 2. should hold  is that an oscillation of $c_{q}$ would correspond to an oscillation of $u$ ``orthogonal to the transport rays'', which would be expensive in terms of Dirichlet energy of $u$. The proofs of the two claims are the most technical part of the work and correspond respectively to  \autoref{prop:varcqlongrays} and \autoref{prop:massoflongraysbound}.
\\Let us mention that the two difficulties 1. and 2. were not present in the proof of the quantitative L\'evy-Gromov inequality in \cite{CMM}, where it was sufficient to work with characteristic functions (which have a fixed scale, i.e. they are either $0$ or $1$).

\medskip

\noindent{\bf Acknowledgement:} 
Part of the work was developed when D.S. was visiting F.C. at SISSA and  A.M. at the Mathematics Institute of the University of Warwick; he wishes to thank both institutes for the excellent working conditions and the stimulating atmosphere.
\\ A.M. is  supported by the EPSRC First Grant EP/R004730/1 ``Optimal transport and geometric analysis'' and by the ERC Starting Grant  802689 ``CURVATURE''.\\
The authors are grateful to the anonymous reviewers, for their suggestions that helped to improve a previous version of the paper.

\section{Background material}\label{S:back}
The goal of this section is to fix the notation and to recall the basic notions/constructions used throughout the paper: in \autoref{s:W2} we review geodesics in the Wasserstein distance, in \autoref{Ss:geom} curvature-dimension conditions, in \autoref{Ss:1DimCD(K,N)} some basics of $\CD(K,N)$ densities on segments of the Real line, and in \autoref{Ss:localization}  the decomposition of the space into transport rays (localization).


\subsection{Geodesics in the $L^2$-Kantorovich-Wasserstein distance}
\label{s:W2} 

Let $(X,\sfd)$ be a compact metric space and $\mm$ a Borel probability measure over $X$. The triple $(X,\sfd, \mm)$ is called metric measure space, m.m.s. for short.
\\ The space of all Borel probability measures over $X$ will be denoted by $\mathcal{P}(X)$.
We define the $L^{2}$-Kantorovich-Wasserstein distance $W_{2}$ between two measures  $\mu_0,\mu_1 \in \mathcal{P}(X)$ as
\begin{equation}\label{eq:Wdef}
  W_2(\mu_0,\mu_1)^2 = \inf_{\pi} \int_{X\times X} \sfd^2(x,y) \, \pi(dxdy),
\end{equation}
where the infimum is taken over all $\pi \in \mathcal{P}(X \times X)$ with $\mu_0$ and $\mu_1$ as the first and the second marginal, i.e. $(P_{1})_{\sharp} \pi= \mu_{0},  (P_{2})_{\sharp} \pi= \mu_{1}$. Of course $P_{i}, i=1,2$ denotes the projection on the first (resp. second) factor and $(P_{i})_{\sharp}$ is the corresponding push-forward map on measures.  As $(X,\sfd)$ is complete, also $(\mathcal{P}(X), W_{2})$ is complete.

The space of geodesics of $(X,\sfd)$ is denoted by
$$
\Geo(X) : = \big\{ \gamma \in C([0,1], X):  \sfd(\gamma_{s},\gamma_{t}) = |s-t| \sfd(\gamma_{0},\gamma_{1}), \text{ for every } s,t \in [0,1] \big\}.
$$
A metric space $(X,\sfd)$ is said to be a \emph{geodesic space} if and only if for each $x,y \in X$ there exists $\gamma \in \Geo(X)$ such that $\gamma_{0} =x, \gamma_{1} = y$. A basic fact of $W_{2}$ geometry, is that if $(X,\sfd)$ is geodesic then $(\mathcal{P}(X), W_2)$ is geodesic as well.
For any $t\in [0,1]$, let ${\rm e}_{t}$ denote the evaluation map:
$$
  {\rm e}_{t} : \Geo(X) \to X, \qquad {\rm e}_{t}(\gamma) : = \gamma_{t}.
$$
Any geodesic $(\mu_t)_{t \in [0,1]}$ in $(\mathcal{P}(X), W_2)$  can be lifted to a measure $\nu \in {\mathcal {P}}(\Geo(X))$, called \emph{dynamical optimal plan},
such that $({\rm e}_t)_\sharp \, \nu = \mu_t$ for all $t \in [0,1]$.
Given $\mu_{0},\mu_{1} \in \mathcal{P}(X)$, we denote by
$\Opt(\mu_{0},\mu_{1})$ the space of all $\nu \in \mathcal{P}(\Geo(X))$ for which $({\rm e}_0,{\rm e}_1)_\sharp\, \nu$
realizes the minimum in \eqref{eq:Wdef}. Here as usual $\sharp$ indicates the push-forward operation.
If $(X,\sfd)$ is geodesic, then the set  $\Opt(\mu_{0},\mu_{1})$ is non-empty for any $\mu_0,\mu_1\in \mathcal{P}(X)$.

A set $F \subset \Geo(X)$ is a \emph{set of non-branching geodesics} if and only if for any $\gamma^{1},\gamma^{2} \in F$, it holds:
$$
\exists \;  \bar t\in (0,1) \text{ such that } \ \forall t \in [0, \bar t\,] \quad  \gamma_{ t}^{1} = \gamma_{t}^{2}
\quad
\Longrightarrow
\quad
\gamma^{1}_{s} = \gamma^{2}_{s}, \quad \forall s \in [0,1].
$$
A measure $\mu$ on a measurable space $(\Omega,\mathcal{F})$ is said to be \emph{concentrated}
on $F \subset \Omega$ if $\exists E \subset F$ with $E \in \mathcal{F}$ so that $\mu(\Omega \setminus E) = 0$. With this terminology, we next recall the definition of \emph{essentially non-branching space}  from \cite{RS2014}. 
\begin{definition}\label{D:essnonbranch}
A metric measure space $(X,\sfd, \mm)$ is \emph{essentially non-branching} if and only if for any $\mu_{0},\mu_{1} \in \mathcal{P}(X)$,
with $\mu_{0},\mu_{1}$ absolutely continuous with respect to $\mm$, any element of $\Opt(\mu_{0},\mu_{1})$ is concentrated on a set of non-branching geodesics.
\end{definition}

\subsection{Curvature-dimension conditions for metric measure spaces}\label{Ss:geom}
 The $L^2$-transport structure described in \autoref{s:W2}  allows to formulate a generalized notion of Ricci curvature lower bound coupled with a dimension upper bound in the context of possibly non-smooth metric measure spaces. This corresponds to the $\CD(K,N)$ condition introduced in the seminal works of Sturm \cite{sturm:I, sturm:II} and Lott--Villani \cite{lottvillani:metric}, which here is reviewed only for a compact m.m.s. $(X,\sfd,\mm)$ with $\mm \in \mathcal{P}(X)$ and in case $K > 0, 1 <N<\infty$ (the basic setting of the present paper).

For $N \in (1,\infty)$, the {\it $N$-R\'enyi relative-entropy functional}
$\mathcal{E}_N : \mathcal{P}(X) \rightarrow [0,1]$ is defined as
\[
\mathcal{E}_N(\mu) := \int \rho^{1 - \frac{1}{N}} d\mm \,,
\]
where $\mu = \rho \mm + \mu^{sing}$ is the Lebesgue decomposition of $\mu$ with $\mu^{sing} \perp \mm$.
\\Given $K\in(0,\infty)$, $N \in(1,\infty)$, and $t \in [0,1]$, define $\sigma^{(t)}_{K,N}:[0,\infty)\to[0,\infty]$ as follows
\begin{equation} \label{def:sigma}
\begin{cases}
\sigma^{(t)}_{K,N}(0) := t \\
\sigma^{(t)}_{K,N}(\theta) := \frac{\sin\left(t \theta \sqrt{\frac{K}{N}}\right)}{\sin\left(\theta \sqrt{\frac{K}{N}}\right)}\quad \text{ if } 0 < \theta < \frac{\pi}{\sqrt{K/N}}\\
\sigma^{(t)}_{K,N}(\theta) := +\infty \quad \text{ otherwise }.
\end{cases}
\end{equation}
Set  also
\begin{equation}\label{eq:deftau}
\tau_{K,N}^{(t)}(\theta) := t^{\frac{1}{N}} \sigma_{K,N-1}^{(t)}(\theta)^{1 - \frac{1}{N}}\,.
\end{equation}

\begin{definition}[$\CD(K,N)$] \label{def:CDKN}
A m.m.s. $(X,\sfd,\mm)$ is said to satisfy $\CD(K,N)$ if for all $\mu_0,\mu_1 \in \mathcal{P}(X)$ absolutely continuous with respect to $\mm$
there exists $\nu \in \Opt(\mu_0,\mu_1)$ so that for all $t\in[0,1]$ it holds $\mu_t := ({\rm e}_{t})_{\#} \nu \ll \mm$ and 
\begin{equation} \label{eq:CDKN-def}
\mathcal{E}_{N'}(\mu_t) \geq \int_{X \times X} \left( \tau^{(1-t)}_{K,N'}(\sfd(x_0,x_1)) \rho_0^{-1/N'}(x_0) + \tau^{(t)}_{K,N'}(\sfd(x_0,x_1)) \rho_1^{-1/N'}(x_1) \right) \pi(dx_0,dx_1) , 
\end{equation}
for all $N'\geq N$, where $\pi = ({\rm e}_0,{\rm e}_1)_{\sharp}(\nu)$ and $\mu_i = \rho_i \mm$, $i=0,1$.
\end{definition}

If $(X,\sfd,\mm)$ verifies the $\CD(K,N)$ condition then the same is valid for $(\supp(\mm),\sfd,\mm)$; hence we directly assume $X = \supp(\mm)$.

For the general definition of $\CD(K,N)$ see \cite{lottvillani:metric, sturm:I, sturm:II}.

\begin{remark}[Case of a smooth Riemannian manifold]
 It is worth recalling that if $(M,g)$ is a Riemannian manifold of dimension $n$ and
$h \in C^{2}(M)$ with $h > 0$ then, denoting with $\sfd_{g}$ and $vol_{g}$ the Riemannian distance and volume measure,  the m.m.s.  $(M,\sfd_{g},h \, vol_{g})$  verifies $\CD(K,N)$ with $N\geq n$ if and only if  (see \cite[Theorem 1.7]{sturm:II})
$$
Ric_{g,h,N} \geq  K g, \qquad Ric_{g,h,N} : =  Ric_{g} - (N-n) \frac{\nabla_{g}^{2} h^{\frac{1}{N-n}}}{h^{\frac{1}{N-n}}},
$$
 in other words if and only if the weighted Riemannian manifold $(M,g, h \, vol_{g})$ has $N$-Bakry-\'Emery Ricci tensor bounded below by $K$.
Note that if $N = n$ the  Bakry-\'Emery  Ricci tensor $Ric_{g,h,N}= Ric_{g}$ makes sense only if $h$ is constant.
\hfill$\Box$
\end{remark}
\medskip

\begin{remark}[$\CD^{*}(K,N)$, $\RCD^{*}(K,N)$ and $\RCD(K,N)$]\label{rem:RCD}
The lack of the local-to-global property of the $\CD(K,N)$ condition (for $K/N \neq 0$)
led in 2010 Bacher and Sturm to introduce in \cite{BS10} the reduced curvature-dimension condition, denoted by $\CD^{*}(K,N)$.
The $\CD^{*}(K,N)$ condition asks for the same inequality \eqref{eq:CDKN-def} of $\CD(K,N)$ to hold but
the coefficients $\tau_{K,N}^{(s)}(\sfd(\gamma_{0},\gamma_{1}))$ are replaced by the slightly smaller $\sigma_{K,N}^{(s)}(\sfd(\gamma_{0},\gamma_{1}))$. Let us explicitly notice that, in general, $\CD^{*}(K,N)$  is weaker than $\CD(K,N)$.
A subsequent breakthrough in the theory was obtained with the introduction of the Riemannian curvature dimension condition $\RCD(K,N)$:
in the infinite dimensional case $N = \infty$ this condition was introduced in \cite{AGS11b} (for finite measures $\mm$, and in \cite{AGMR12} for $\sigma$-finite ones).
The finite dimensional refinements $\RCD(K,N)/\RCD^{*}(K,N)$ with $N<\infty$ were subsequently studied in \cite{gigli:laplacian,EKS,AMS}. We refer to these articles as well as to the survey papers \cite{AmbrosioICM,VilB} for a general account
on the synthetic formulation of  Ricci curvature lower bounds, in particular of the latter Riemannian-type.
Here we only briefly recall that it is a stable \cite{GMS2013} strengthening of the (resp. reduced) curvature-dimension condition:
a m.m.s. verifies $\RCD(K,N)$ (resp.  $\RCD^{*}(K,N)$) if and only if it satisfies $\CD(K,N)$ (resp. $\CD^{*}(K,N)$) and the Sobolev space $W^{1,2}(X,\mm)$ is a Hilbert space (with the Hilbert structure induced by the Cheeger energy).

To conclude we recall also that recently, the first named author together with E. Milman  \cite{CMi} proved the equivalence
of $\CD(K,N)$ and $\CD^{*}(K,N)$,
together with the local-to-global property for $\CD(K,N)$, in the framework of  essentially non-branching m.m.s. having $\mm(X) < \infty$.
As we will always assume the aforementioned properties to be satisfied by our ambient m.m.s. $(X,\sfd,\mm)$, we will use both formulations with no distinction.
It is worth also mentioning that a m.m.s. verifying $\RCD^{*}(K,N)$ is essentially non-branching (see \cite[Corollary 1.2]{RS2014})
implying also the equivalence of $\RCD^{*}(K,N)$ and  $\RCD(K,N)$ (see \cite{CMi} for details). \hfill$\Box$
\end{remark}

\medskip

We shall always assume that the m.m.s. $(X,\sfd,\mm)$ is essentially non-branching and satisfies $\CD(K,N)$ for some $K>0, N\in (1,\infty)$
with $\supp(\mm) = X$. It follows that $(X,\sfd)$ is a geodesic and compact metric space. More precisely: note we assumed from the beginning  $(X,\sfd)$ to be compact for sake of simplicity, however such an assumption could have been replaced by completeness and separability throughout \autoref{s:W2} and \autoref{Ss:geom}; however compactness would have been now a consequence of  $\CD(K,N)$ for some $K>0, N\in (1,\infty)$.

A useful property of essentially non-branching $\CD(K,N)$ spaces is the validity of a weak local Poincar\'e inequality. 

\begin{proposition}[Weak local Poincar\'e inequality]
Let $(X,\sfd,\mm)$ be an essentially non-branching $\CD(K,N)$ space for some $K\geq 0, N>1$.
For every $u\in \Lip(X)$ it holds
\begin{equation}\label{eq:LocPoinc11}
 \fint_{B_{r}(x)}\Big|u - \fint_{B_{r}(x)} u\Big| \mm\\
\leq  2^{N+2} r \fint_{B_{2r}(x)} |\nabla u| \mm.
\end{equation}
More generally, for every $p\geq 1$ there exists $C_{p,N}$ such that
\begin{equation}\label{eq:LocPoincqq}
 \fint_{B_{r}(x)}\Big|u - \fint_{B_{r}(x)} u\Big|^{p} \mm\\
\leq  C_{p,N} \, r^p \,\fint_{B_{10r}(x)} |\nabla u|^{p} \mm.
\end{equation}
\end{proposition}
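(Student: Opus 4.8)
The plan is to establish the weak local Poincaré inequality by first proving a one-dimensional version along transport rays and then integrating over the disintegration. First I would fix $u \in \Lip(X)$ and a ball $B_r(x)$, and apply the localization theorem (\autoref{T:localize}) to the function $v := u - \fint_{B_r(x)} u$ restricted appropriately — more precisely, I would localize with respect to $v \cdot \mathbf{1}_{B_{2r}(x)}$ (suitably modified to have zero mean). This produces a partition $\{X_q\}_{q\in Q}$ into transport rays, each equipped with a conditional measure $\mm_q$ such that $(X_q,\sfd,\mm_q)$ is a $\CD(K,N)$ space (in fact a $1$-dimensional $\CD(K,N)$ density on an interval, as recalled in \autoref{Ss:1DimCD(K,N)}). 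The essentially non-branching hypothesis is exactly what guarantees this disintegration is well-defined and that the rays cover $\mm$-almost all of $X$.

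The heart of the argument is the one-dimensional estimate: on an interval $I=[0,D]$ carrying a $\CD(K,N)$ density $h$ with $K\geq 0$, one has a Poincaré-type bound $\fint_J |f - \fint_J f|\,h \leq C(N)\,\mathrm{diam}(J)\fint_{J}|f'|\,h$ for suitable subintervals $J$, with the constant coming from the bounded-oscillation (doubling-type) property of $\CD(K,N)$ densities: for $K\geq 0$ the density $h$ satisfies $h(t)/h(s) \leq$ (a dimensional constant depending on the ratio of the relevant subinterval lengths), which is what produces the factor $2^{N+2}$ in \eqref{eq:LocPoinc11} and $C_{p,N}$ in \eqref{eq:LocPoincqq}. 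I would derive this from the concavity of $h^{1/(N-1)}$ (the defining property of $\CD(N-1,N)$ densities, or its $K\geq 0$ analogue), which immediately gives comparability of $h$ on nested intervals of comparable length. The $L^p$ version \eqref{eq:LocPoincqq} follows the same way, using Jensen/Hölder to pass from the $L^1$ bound to the $L^p$ bound at the cost of enlarging the concentric ball from $B_{2r}$ to $B_{10r}$ and absorbing powers into $C_{p,N}$.

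I would then reassemble: writing $\fint_{B_r(x)}|u - \fint_{B_r(x)} u|\,\mm \leq 2\fint_{B_r(x)}|u - c|\,\mm$ for any constant $c$, I choose $c$ ray-by-ray (the ray average over $X_q \cap B_{2r}(x)$) and integrate the one-dimensional estimate against the quotient measure $\qq$ on $Q$, using that $B_r(x)\subset B_{2r}(x)$ and that each ray segment inside $B_{2r}(x)$ has length at most $4r$ (or $20r$ in the $L^p$ case), so the diameter factor is controlled by $r$. A technical point is handling the portion of $\mm$ not covered by nondegenerate rays, but by the localization theorem this set is $\mm$-negligible (or consists of points where $v$ vanishes, contributing nothing).

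The main obstacle I anticipate is the bookkeeping at the boundary: a transport ray $X_q$ need not be contained in $B_{2r}(x)$, and $X_q\cap B_r(x)$ may be disconnected or may sit near the endpoints of $X_q$, where the $\CD(K,N)$ density can degenerate. Controlling the ray average over $X_q\cap B_r(x)$ by data on $X_q\cap B_{2r}(x)$ therefore requires the density-comparison estimate to be robust near ray endpoints — this is precisely why one passes to a larger ball ($B_{2r}$, resp. $B_{10r}$) and why the constant carries the exponential-in-$N$ growth. Getting these geometric inclusions and the resulting constant clean is the delicate part; the rest is a routine application of \autoref{T:localize} together with the one-dimensional comparison recalled in \autoref{Ss:1DimCD(K,N)}.
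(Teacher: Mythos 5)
Your one-dimensional ingredient is fine, but the reassembly step contains a genuine gap, and it is the essential one. The inequality $\fint_{B_{r}(x)}|u-\fint_{B_{r}(x)}u|\,\mm\le 2\fint_{B_{r}(x)}|u-c|\,\mm$ is valid only for a \emph{single} constant $c$; once you let $c=c_{q}$ vary with the ray you must additionally estimate $\int_{Q}\int_{X_{q}\cap B_{r}(x)}|c_{q}-\fint_{B_{r}(x)}u|\,\mm_{q}\,\qq(dq)$, i.e.\ the oscillation of the ray averages $q\mapsto c_{q}$ \emph{transverse} to the needles, and the one-dimensional Poincar\'e inequalities along the rays carry no information about this quantity. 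This is not bookkeeping: controlling transverse oscillation is precisely what a local Poincar\'e inequality is for, and the paper \emph{uses} this very proposition (in \eqref{E:poincare1} and in Step 3 of the proof of \autoref{prop:massoflongraysbound}) exactly to bound the variance of the ray averages; deriving the proposition from the needle decomposition would therefore beg the hardest question of the whole paper. There is also a second, more technical obstruction: to apply \autoref{T:localize} locally you must cut off, say $f=(u-\fint_{B_{2r}(x)}u)\chi_{B_{2r}(x)}$, but then $f_{q}=f|_{X_{q}}$ jumps where a ray crosses $\partial B_{2r}(x)$, so its derivative has a singular part of size $|u-\fint_{B_{2r}(x)}u|$ that is not dominated by $|\nabla u|$ (and a Lipschitz cutoff instead produces the error term $|u-c|/r$ on an annulus, which reintroduces the quantity being estimated); moreover the balancing condition $\int_{X_{q}}f\,\mm_{q}=0$ holds over the whole ray, which need not be contained in any fixed enlargement of $B_{2r}(x)$, and not over $X_{q}\cap B_{r}(x)$.

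The paper's proof is entirely different and avoids needles altogether: essential non-branching together with $\CD(K,N)$ (in fact ${\rm MCP}(K,N)$ suffices) gives uniqueness of the $W_{2}$-geodesic between absolutely continuous measures, which places $(X,\sfd,\mm)$ in the framework of Rajala's local Poincar\'e inequality \cite[Corollary 1]{RajalaLocPoinc} --- an argument based on $W_{2}$-interpolation between normalized restrictions of $\mm$ to balls --- yielding \eqref{eq:LocPoinc11}; the $L^{p}$ version \eqref{eq:LocPoincqq} then follows from the Bishop--Gromov doubling property and the self-improvement theorem of Haj\l asz--Koskela \cite[Theorem 5.1]{HaKo}.
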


\begin{proof}
It is well known that, in essentially  non-branching $\CD(K,N)$ spaces, the $W_{2}$ geodesic connecting two absolutely continuous probability measures is unique (indeed, it holds more generally for essentially non-branching ${\rm MCP}(K,N)$ spaces, \cite[Theorem 1.1]{CavallettiMondino17c}). Thus, $(X,\sfd,\mm)$ as in the assumptions enters the framework of \cite[Corollary 1]{RajalaLocPoinc} and  \eqref{eq:LocPoinc11} follows.

Recalling that by Bishop-Gromov Inequality \cite[Theorem 2.3]{sturm:II} it holds $\frac{\mm(B_{\rho}(x_{0}))}{\mm(B_{1}(x_{0}))}\geq C_{N} \rho^{N}$ for every $\rho\in [0,1], x_{0}\in X$, the second claim \eqref{eq:LocPoincqq} is a consequence of \eqref{eq:LocPoinc11} and \cite[Theorem 5.1]{HaKo}.
\end{proof}


\subsection{$\CD(K,N)$ densities on segments of the real line}
\label{Ss:1DimCD(K,N)}
We will use several times the following terminology:
recalling the coefficients $\sigma$ from \eqref{def:sigma}, a non-negative function $h$ defined on an interval $I \subset \R$ is called \emph{a $\CD(K,N)$ density on $I$}, for $K \in \R$ and $N \in (1,\infty)$, if for all $x_0,x_1 \in I$ and $t \in [0,1]$:
\begin{equation}\label{E:1dCD}
 h(t x_1 + (1-t) x_0)^{\frac{1}{N-1}} \geq  \sigma^{(t)}_{K,N-1}(| x_1-x_0|) h(x_1)^{\frac{1}{N-1}} + \sigma^{(1-t)}_{K,N-1}(|x_1-x_0|) h(x_0)^{\frac{1}{N-1}} .
\end{equation}

The link with the definition of $\CD(K,N)$ for m.m.s. can be summarized as follows (see for instance \cite[Theorem A.2]{CMi}):
if $h$ is a $\CD(K,N)$ density on an interval $I \subset \R$ then the m.m.s. $(I,|\cdot |,h(t) dt)$ verifies $\CD(K,N)$; conversely, if the m.m.s. $(\R,|\cdot |,\mu)$
verifies $\CD(K,N)$ and $I = \supp(\mu)$ is not a point, then $\mu \ll \L^1$ and there exists a representative of the density $h = d\mu / d\L^1$ which is a $\CD(K,N)$ density on $I$.

A $\CD(K,N)$ density  $h$ defined on an interval $I \subset \R$ satisfies the following properties:
\begin{itemize}
\item $h$ is lower semi-continuous on $I$ and locally Lipschitz continuous in its interior (this is easily reduced to the corresponding statement for concave functions on $I$).
\item $h$ is strictly positive in the interior of $I$ whenever it does not vanish identically (this follows directly from the definition \eqref{E:1dCD}).
\item $h$ is locally semi-concave in the interior of $I$, i.e. for all $x_{0}$ in the interior of $I$, there exists $C_{x_{0}}\in \R$ so that $x\mapsto h(x)-C_{x_{0}} (x-x_{0})^{2}$ is concave in a neighbourhood of $x_{0}$. In particular, $h$ is twice differentiable in $I$ with at most countably many exceptions.
\end{itemize}

As proven in  \cite[Lemma A.5]{CMi}, if $h$ is a $\CD(K,N)$ density on an interval $I$ then at any point $x$ in the interior where it is twice differentiable (thus up to  at most countably many exceptions) it holds
\begin{equation}\label{eq:logh''-K}
(\log h)''(x)+\frac{1}{N-1}\left((\log h)'(x) \right)^{2} = (N-1) \frac{(h^{\frac{1}{N-1}})''  (x)}{h^{\frac{1}{N-1}}(x)}\leq -K.
\end{equation}
Also the converse implication holds, see  \cite[Lemma A.6]{CMi} for the proof and the precise statement. 

We next recall some estimates on $\CD(N-1,N)$ densities, which will turn up to be useful in the paper. 
Let $h_N$ be the model density for the $\CD(N-1,N)$ condition given by
\begin{equation}\label{eq:densityhN}
h_N(t):=\frac{1}{\omega_N}\sin^{N-1}(t)\quad\text{for $t\in[0,\pi]$},
\end{equation}
where $\omega_N:=\int_0^{\pi}\sin^{N-1}(t)\di t$. Let $\epsilon:=\pi-D$ and $\lambda_D:=\int_0^Dh_N(t)\di t$, for any $D\in [0,\pi]$.
\\For a proof of the next proposition see for instance \cite[Proposition A.3]{CMM}.
\begin{proposition}\label{prop:hCDN-1N}
Let $h:[0,D]\to[0,+\infty)$ be a $\CD(N-1,N)$ density which integrates to $1$ on $[0,D]$. Then, for any $t\in(0,D)$, it holds
\begin{equation}\label{eq:distdens1}
\left(\frac{\omega_N}{\omega_N\lambda_D+\epsilon}\right)\min\left\lbrace h_N(t),h_N(t+\epsilon)\right\rbrace \le h(t)\le \left(\frac{\omega_N}{\omega_N-\epsilon}\right)\max\left\lbrace h_N(t),h_N(t+\epsilon)\right\rbrace.
\end{equation}
\end{proposition}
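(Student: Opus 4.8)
The plan is to deduce everything directly from the defining inequality \eqref{E:1dCD} of a $\CD(N-1,N)$ density together with the normalization $\int_0^D h=1$, working with $H:=h^{1/(N-1)}$. Since $\sigma^{(s)}_{N-1,N-1}(\theta)=\sin(s\theta)/\sin\theta$ for $0<\theta<\pi$, \eqref{E:1dCD} says precisely that $H$ is \emph{$\sin$-concave} on $[0,D]$: for all $x_0<x_1$ in $[0,D]$ (so $x_1-x_0\le D\le\pi$) and $s\in[0,1]$,
\[
H\big((1-s)x_0+sx_1\big)\ \ge\ \frac{\sin\!\big((1-s)(x_1-x_0)\big)}{\sin(x_1-x_0)}\,H(x_0)+\frac{\sin\!\big(s(x_1-x_0)\big)}{\sin(x_1-x_0)}\,H(x_1).
\]
Fix $t_0\in(0,D)$ and put $v:=H(t_0)=h(t_0)^{1/(N-1)}$. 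Applying this inequality to the triple $\{0,t_0,D\}$ — once with $t_0$ as interior point, once with an arbitrary $t$ as interior point — and discarding the nonnegative terms $H(0),H(D)$, one obtains the pointwise two–sided bounds (valid for a.e.\ $t$, using the canonical lower–semicontinuous, locally Lipschitz representative of $h$)
\[
v\,\frac{\sin t}{\sin t_0}\ \le\ H(t)\ \le\ v\,\frac{\sin(D-t)}{\sin(D-t_0)}\ \text{ on }(0,t_0],\qquad
v\,\frac{\sin(D-t)}{\sin(D-t_0)}\ \le\ H(t)\ \le\ v\,\frac{\sin t}{\sin t_0}\ \text{ on }[t_0,D).
\]

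Next I would raise these to the power $N-1$, integrate over $[0,D]$, and use $\int_0^D H^{N-1}\di t=\int_0^D h\,\di t=1$; after the change of variables $s\mapsto D-s$ in the integrals coming from the interval on the far side of $t_0$, this yields the explicit inequalities
\[
\Big(\tfrac{\int_{D-t_0}^{D}\sin^{N-1}}{\sin^{N-1}(D-t_0)}+\tfrac{\int_{t_0}^{D}\sin^{N-1}}{\sin^{N-1}t_0}\Big)^{-1}\ \le\ h(t_0)\ \le\ \Big(\tfrac{\int_{0}^{t_0}\sin^{N-1}}{\sin^{N-1}t_0}+\tfrac{\int_{0}^{D-t_0}\sin^{N-1}}{\sin^{N-1}(D-t_0)}\Big)^{-1},
\]
where $\int_a^b\sin^{N-1}$ abbreviates $\int_a^b\sin^{N-1}(r)\,\di r$. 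It then remains to simplify these two explicit quantities. The elementary ingredients are: the identity $\sin(D-t_0)=\sin(t_0+\epsilon)$ (since $D=\pi-\epsilon$); the reflection $s\mapsto\pi-s$, which turns $\int_{D-t_0}^{D}\sin^{N-1}$ into $\int_{\epsilon}^{\epsilon+t_0}\sin^{N-1}$ and $\int_0^{D-t_0}\sin^{N-1}$ into $\omega_N-\int_0^{t_0+\epsilon}\sin^{N-1}$; the normalizations $\int_0^D\sin^{N-1}=\omega_N\lambda_D$ and $\int_0^\pi\sin^{N-1}=\omega_N$; and the crude estimate $\int_{t_0}^{t_0+\epsilon}\sin^{N-1}\le\epsilon$. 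One then splits according to whether $\sin t_0\le\sin(t_0+\epsilon)$ or $\sin t_0\ge\sin(t_0+\epsilon)$ (equivalently $t_0\le D/2$ or $t_0\ge D/2$, which identifies which of $h_N(t_0),h_N(t_0+\epsilon)$ is the minimum and which the maximum), and uses the monotonicity of $\sin^{N-1}$ on the relevant range to bound the first fraction in each bracket by a fraction with a common denominator. In the case $t_0\le D/2$ the upper bound collapses to $h(t_0)\le\sin^{N-1}(t_0+\epsilon)/(\omega_N-\epsilon)=\tfrac{\omega_N}{\omega_N-\epsilon}\max\{h_N(t_0),h_N(t_0+\epsilon)\}$ and the lower bound to $h(t_0)\ge\sin^{N-1}t_0/(\omega_N\lambda_D+\epsilon)=\tfrac{\omega_N}{\omega_N\lambda_D+\epsilon}\min\{h_N(t_0),h_N(t_0+\epsilon)\}$, which is exactly \eqref{eq:distdens1}; the case $t_0\ge D/2$ follows from this one by the reflection $t\mapsto D-t$, which preserves all hypotheses and swaps $h_N(t_0)$ with $h_N(t_0+\epsilon)$.

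Once the strategy of \emph{integrating the $\sin$-concavity estimates against the mass constraint} is adopted, the argument is elementary one–variable calculus and there is no serious conceptual obstacle; the parts requiring attention are purely of bookkeeping nature. The first is making sure the pointwise bounds are read for the good representative of $h$ and only degrade harmlessly at the two endpoints $t\in\{0,D\}$ (a null set, so integration is unaffected even when $D=\pi$, where the coefficient $\sin(x_1-x_0)$ in \eqref{E:1dCD} vanishes at the extreme triple). The second, and the most error–prone, is organizing the two symmetric cases $t_0\lessgtr D/2$ so as not to lose the sharp constants $\tfrac{\omega_N}{\omega_N-\epsilon}$ and $\tfrac{\omega_N}{\omega_N\lambda_D+\epsilon}$. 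Finally, the degenerate case $\epsilon=0$ (i.e.\ $D=\pi$) is covered by the same computation verbatim, all $\epsilon$–error terms vanishing and \eqref{eq:distdens1} reducing to the rigidity $h\equiv h_N$.
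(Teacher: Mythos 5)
Your proof is correct. The paper does not prove \autoref{prop:hCDN-1N} itself but defers to \cite[Proposition A.3]{CMM}, and your argument — reading \eqref{E:1dCD} as $\sin$-concavity of $h^{1/(N-1)}$, deriving the two-sided pointwise comparison on either side of a fixed $t_0$, integrating against the normalization $\int_0^D h\,\di t=1$, and simplifying via $\sin(D-t_0)=\sin(t_0+\epsilon)$ together with $\int_{t_0}^{t_0+\epsilon}\sin^{N-1}\le\epsilon$ — is essentially the argument given there; the bookkeeping steps you defer do go through as described.
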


\begin{corollary}\label{cor:estdens}
Under the assumptions of \autoref{prop:hCDN-1N}, there exist a constant $C=C(N)>0$ and $\epsilon_0>0$ with the following property: if $\epsilon\in [0,\epsilon_0]$ then for any $t\in (0,D)$ it holds
\begin{equation}\label{eq:distancedensities}
\abs{h(t)-h_N(t)}\le C\epsilon.
\end{equation} 
Moreover, for $r\in (0, 1/10)$ and $\epsilon\in (0, r/10)$ the following improved estimate holds:
\begin{equation}\label{eq:distancedensitiesImproved}
\abs{h(t)-h_N(t)}\le C r^{N-2}\epsilon, \quad \text{for all } t\in ([0, r]\cup[\pi-r, D]).
\end{equation} 
\end{corollary}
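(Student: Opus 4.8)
\textbf{Proof proposal for \autoref{cor:estdens}.}

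The plan is to extract \eqref{eq:distancedensities} and \eqref{eq:distancedensitiesImproved} from the two-sided bound \eqref{eq:distdens1} by controlling the difference between $h_N(t)$ and its shifted copy $h_N(t\pm\epsilon)$, and by controlling the multiplicative prefactors $\omega_N/(\omega_N\lambda_D+\epsilon)$ and $\omega_N/(\omega_N-\epsilon)$ near $1$. First I would observe that both prefactors equal $1+\mathrm{O}(\epsilon)$: since $\lambda_D=\int_0^D h_N$ and $\omega_N-\omega_N\lambda_D=\int_D^\pi h_N(t)\di t=\int_{\pi-\epsilon}^\pi\frac{1}{\omega_N}\sin^{N-1}\le C\epsilon^{N}\le C\epsilon$ (using $\sin t\le \pi-t$ on the relevant range and $N>1$), we get $\omega_N\lambda_D+\epsilon = \omega_N + \mathrm{O}(\epsilon)$, hence $\omega_N/(\omega_N\lambda_D+\epsilon)=1+\mathrm{O}(\epsilon)$; similarly for the upper prefactor once $\epsilon_0<\omega_N$. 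Then I would bound $|h_N(t+\epsilon)-h_N(t)|$: the function $h_N$ is Lipschitz on $[0,\pi]$ with $\|h_N'\|_\infty\le C(N)$ (its derivative $\frac{N-1}{\omega_N}\sin^{N-2}t\cos t$ is bounded when $N\ge 2$; for $1<N<2$ it blows up near the endpoints but is still integrable, so one uses instead that $h_N$ is $(N-1)$-H\"older, which suffices since $\epsilon^{N-1}\le\epsilon$ for small $\epsilon$ — or more simply one notes $|h_N(t+\epsilon)-h_N(t)|\le C\epsilon$ holds for $N\ge 2$ and the general case is reduced to this by the elementary inequality $|a^{N-1}-b^{N-1}|$ estimates). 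Combining, $\min\{h_N(t),h_N(t+\epsilon)\}$ and $\max\{h_N(t),h_N(t+\epsilon)\}$ both equal $h_N(t)+\mathrm{O}(\epsilon)$, and multiplying by $1+\mathrm{O}(\epsilon)$ while using the uniform bound $h_N\le C(N)$ yields $h(t)=h_N(t)+\mathrm{O}(\epsilon)$, which is \eqref{eq:distancedensities}.

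For the improved estimate \eqref{eq:distancedensitiesImproved} near the endpoints, the point is that on $[0,r]$ (and symmetrically on $[\pi-r,D]$) one has $h_N(t)\le \frac{1}{\omega_N}r^{N-1}$ and, more importantly, $h_N'(t)=\frac{N-1}{\omega_N}\sin^{N-2}t\cos t$ satisfies $|h_N'(t)|\le C(N) t^{N-2}\le C(N) r^{N-2}$ when $N\ge 2$; when $1<N<2$ one uses the H\"older-type bound $|h_N(t+\epsilon)-h_N(t)|\le C(N)\,|\,(t+\epsilon)^{N-1}-t^{N-1}|\le C(N) r^{N-2}\epsilon$ on $[0,r]$ (valid since $\epsilon<r/10$ keeps $t+\epsilon$ comparable to $r$), so in all cases $|h_N(t\pm\epsilon)-h_N(t)|\le C(N) r^{N-2}\epsilon$ on the endpoint regions. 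Since on these regions $h_N(t)\le C(N) r^{N-1}\le C(N) r^{N-2}$ as well (as $r<1/10$), multiplying by the prefactors $1+\mathrm{O}(\epsilon)$ and tracking the dominant term $r^{N-2}\epsilon$ gives $|h(t)-h_N(t)|\le C(N) r^{N-2}\epsilon$ throughout $[0,r]\cup[\pi-r,D]$, which is \eqref{eq:distancedensitiesImproved}. One must also check that for $t\in[\pi-r,D]$ the shifted argument $t+\epsilon$ does not exceed $\pi$ in a way that matters; since $D\le\pi$ and we only need $h_N$ evaluated where \eqref{eq:distdens1} places it, and $h_N$ extends by $0$ past $\pi$, the bound is unaffected.

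The main obstacle — really the only subtlety — is the non-Lipschitz behaviour of $h_N$ near the endpoints when $1<N<2$, where the naive mean-value estimate $|h_N(t+\epsilon)-h_N(t)|\le\|h_N'\|_\infty\epsilon$ fails because $h_N'$ is unbounded. The resolution is routine: replace it by the elementary inequality for power functions, namely that $t\mapsto t^{N-1}$ has modulus of continuity controlled either by $C\epsilon^{N-1}$ globally (enough for \eqref{eq:distancedensities}) or by $C r^{N-2}\epsilon$ on intervals bounded away from $0$ by a fixed multiple of $r$ (enough for \eqref{eq:distancedensitiesImproved}, using $\epsilon<r/10$). Everything else is a direct bookkeeping of $\mathrm{O}(\epsilon)$ terms against the uniform bounds on $h_N$ and $\lambda_D$, with $\epsilon_0=\epsilon_0(N)$ chosen small enough that all denominators stay bounded below.
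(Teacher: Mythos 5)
Your argument is the same as the paper's, which is equally terse: it cites the asymptotic expansions of the two prefactors, the Lipschitz continuity of $h_N$, and the mean value theorem, exactly the three ingredients you spell out. For $N\ge 2$ everything you write is correct and complete. The problem is the range $1<N<2$, which you rightly identify as the only subtlety but then patch with two inequalities that are false. First, for \eqref{eq:distancedensities} you pass to the $(N-1)$-H\"older continuity of $h_N$ and claim ``$\epsilon^{N-1}\le\epsilon$ for small $\epsilon$''; since $N-1\in(0,1)$ this is backwards ($\epsilon^{N-1}\ge\epsilon$ for $\epsilon\le 1$), and near $t=0$ one genuinely has $h_N(t+\epsilon)-h_N(t)\sim\epsilon^{N-1}/\omega_N\gg\epsilon$, so \eqref{eq:distdens1} only yields $|h(t)-h_N(t)|\le C\epsilon^{N-1}$ there; the alternative you sketch via ``$|a^{N-1}-b^{N-1}|$ estimates'' cannot repair this, as the case $a=\epsilon$, $b=0$ shows. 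Second, for \eqref{eq:distancedensitiesImproved} you assert that $\epsilon<r/10$ keeps $t+\epsilon$ comparable to $r$ on $[0,r]$; it does not when $t\ll r$, and at $t=0$ your claimed bound reads $\epsilon^{N-1}\le Cr^{N-2}\epsilon$, i.e.\ $(r/\epsilon)^{2-N}\le C$, which fails as $\epsilon\to 0$ with $r$ fixed.

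To be fair, the paper's own one-line proof has the same lacuna for $1<N<2$ (where $h_N$ is not Lipschitz at the endpoints of $[0,\pi]$), and in that range the ``improved'' bound $Cr^{N-2}\epsilon\ge C\epsilon$ is anyway no stronger than \eqref{eq:distancedensities}, so the real content of \eqref{eq:distancedensitiesImproved} is the case $N>2$, where your mean-value argument works. Note also that the integrated consequences actually used downstream (e.g.\ \eqref{eq:estintegralsimproved}) do survive for all $N>1$, since $\int_0^r\abs{h_N(t+\epsilon)-h_N(t)}\di t\le Cr^{N-1}\epsilon$ even when the pointwise difference is only $O(\epsilon^{N-1})$. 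But as a pointwise statement, your derivation (like the paper's) establishes the corollary only for $N\ge 2$; for $1<N<2$ the behaviour of $h_N$ near the endpoints forces either a restriction on $t$ away from $\{0,D\}$ or the weaker exponent $\epsilon^{N-1}$, and this should be stated rather than absorbed into a false elementary inequality.
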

\begin{proof}
The validity of \eqref{eq:distancedensities} follows from \eqref{eq:distdens1} taking into account the Lipschitz continuity of $h_N$ and the asymptotic expansions of 
\begin{equation*}
\frac{\omega_N}{\omega_N\lambda_D+\epsilon}\quad\text{and}\quad\frac{\omega_N}{\omega_N-\epsilon},
\end{equation*}
as $\epsilon\to 0$.  The improved estimate \eqref{eq:distancedensitiesImproved} on  $([0, r]\cup[\pi-r, D])$ follows analogously from  \eqref{eq:distdens1} and the mean value theorem.
\end{proof}

Armed with \autoref{cor:estdens} we can prove that, if $D\in (0,\pi)$ is close to $\pi$, then the integrals of the functions $\sin$ and $\cos$ (and of any bounded function, more in general) with respect to a $\CD(N-1,N)$ density $h$ defined on $[0,D]$ do not differ much from the value of the corresponding integrals computed with respect to the model density  $h_N$.

\begin{corollary}\label{cor:estintegrals}
Let $f:[0,\pi]\to[-1,1]$ be Borel measurable. Denote $\meas(dt):=h(t) \L^{1}(dt)$ and $\meas_N(dt):=h_{N}(t) \L^{1}(dt)$. Under the assumptions of Proposition \ref{prop:hCDN-1N}, there exist a constant $C=C(N)>0$ and $\epsilon_0>0$ with the following property:  if $\epsilon\in [0,\epsilon_0]$  then 
\begin{equation}\label{eq:estintegrals}
\abs{\int_0^Df(t)\,\meas(dt)-\int_0^{\pi}f(t) \,\meas_N(dt)} \le C\epsilon .
\end{equation}
 Moreover,  for and any  $r\in (0, 1/10)$ and $\epsilon\in (0, r/10)$ the following improved estimate holds
\begin{equation} \label{eq:estintegralsimproved}
\abs{\int_0^r f(t)\,\meas(dt)-\int_0^{r}f(t) \,\meas_N(dt)}+  \abs{\int_{\pi-r} ^{D} f(t)\,\meas(dt)-\int_{\pi-r}^{\pi}f(t) \,\meas_N(dt)} \le C\epsilon r^{N-1} .
\end{equation}

\end{corollary}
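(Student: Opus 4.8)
\textbf{Proof plan for \autoref{cor:estintegrals}.}
The strategy is to reduce the integral estimate \eqref{eq:estintegrals} to the pointwise density estimate \eqref{eq:distancedensities} of \autoref{cor:estdens}, and the localized estimate \eqref{eq:estintegralsimproved} to the improved pointwise bound \eqref{eq:distancedensitiesImproved}. First I would fix $\epsilon_0$ to be the minimum of the threshold provided by \autoref{cor:estdens} and, say, $1$, so that $D=\pi-\epsilon$ is bounded away from $0$; this guarantees that all the density comparisons below are available on $(0,D)$.

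For \eqref{eq:estintegrals}, I would split
\begin{equation*}
\int_0^D f(t)\,\meas(dt)-\int_0^{\pi} f(t)\,\meas_N(dt)
= \int_0^D f(t)\big(h(t)-h_N(t)\big)\di t \; - \int_D^{\pi} f(t) h_N(t)\di t.
\end{equation*}
Since $\abs{f}\le 1$, the first term is bounded in absolute value by $\int_0^D \abs{h(t)-h_N(t)}\di t \le C\epsilon\, D \le C\pi\epsilon$ using \eqref{eq:distancedensities}, while the second term is bounded by $\int_D^{\pi} h_N(t)\di t = \int_{\pi-\epsilon}^{\pi} h_N(t)\di t$, which is $O(\epsilon^{N})$ (indeed $h_N$ is bounded near $\pi$, in fact $h_N(t)\le C(\pi-t)^{N-1}$ there), hence certainly $\le C\epsilon$ after possibly enlarging $C$. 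Combining the two bounds gives \eqref{eq:estintegrals}.

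For the improved local estimate \eqref{eq:estintegralsimproved} I would argue identically on each of the two intervals. On $[0,r]$ there is no boundary-gap term, so
\begin{equation*}
\abs{\int_0^r f(t)\,\meas(dt)-\int_0^{r} f(t)\,\meas_N(dt)} \le \int_0^r \abs{h(t)-h_N(t)}\di t \le C r^{N-2}\epsilon\cdot r = C r^{N-1}\epsilon,
\end{equation*}
using \eqref{eq:distancedensitiesImproved}. On $[\pi-r,D]$ the same reasoning applies to $\int_{\pi-r}^D \abs{h-h_N}\le Cr^{N-2}\epsilon\cdot r$, and the leftover piece $\int_D^{\pi} \abs{f} h_N \le \int_{\pi-\epsilon}^{\pi} h_N(t)\di t \le C\epsilon^{N}$, which is $\le C r^{N-1}\epsilon$ because $\epsilon< r/10<1$ forces $\epsilon^{N-1}\le r^{N-1}$ (here $N>1$). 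Summing the two contributions yields \eqref{eq:estintegralsimproved}.

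The only mildly delicate point, and the one I would be most careful about, is the control of the boundary-gap term $\int_D^{\pi} h_N(t)\di t$: one must use the explicit decay $\sin^{N-1}(t)\sim (\pi-t)^{N-1}$ as $t\to\pi^-$ to see it is negligible at the claimed order, rather than merely $O(\epsilon)$ from a crude sup bound — though even the crude bound suffices for \eqref{eq:estintegrals}. Everything else is a one-line application of $\abs{f}\le 1$ together with the corollary on densities; no essentially non-branching or $\CD$ machinery beyond \autoref{cor:estdens} is needed.
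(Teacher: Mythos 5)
Your proof is correct and follows essentially the same route as the paper's (one-line) argument: integrate the pointwise density bounds of \autoref{cor:estdens} over the relevant intervals and absorb the boundary-gap term $\int_D^{\pi} f\,\meas_N = O(\epsilon^{N})$, which is dominated by $C\epsilon$ in the first estimate and by $C\epsilon r^{N-1}$ in the second since $\epsilon < r/10$. The only difference is that you spell out the details the paper leaves implicit.
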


\begin{proof}
The conclusion follows from \autoref{cor:estdens} just by integrating on $[0,D]$ and taking into account that $|\int_{D}^{\pi} f \, \meas_{N} |\leq   C\epsilon^{N}$
\end{proof}

\subsection{Localization and $L^{1}$-optimal transportation}\label{Ss:localization}

The localization technique has its roots in a work of  Payne-Weinberger \cite{PW} and has been developed by Gromov-Milman \cite{GrMi}, Lov\'asz-Simonovits \cite{LoSi} and Kannan-Lov\'asz-Simonovits \cite{KaLoSi} in the setting of Euclidean spaces, spheres and Hilbert spaces. The basic idea is to  reduce an $n$-dimensional problem, via tools of convex geometry,
to lower dimensional problems which are easier to handle.  In the aforementioned papers, the symmetries of the spaces were heavily used to obtain such a dimensional reduction, typically via iterative bisections.
In the recent paper \cite{klartag}, Klartag found a bridge  between $L^1$-optimal transportation problems and the localization techinque yielding the localization theorem in the framework of smooth Riemannian manifolds.
Inspired by this approach, the first and the second author in \cite{CavallettiMondino17a}
proved a localization theorem for essentially non-branching metric measure spaces verifying the $\CD(K,N)$ condition. Before stating the result it is worth recalling some basics about the disintegration of a measure associated to a partition (for a comprehensive treatment see the monograph by Fremlin \cite{Fre:measuretheory4}; for a discussion closer to the spirit of this paper see \cite{biacar:cmono};  for a one-page summary see \cite[Appendix B]{CMM}).
\\

Given a measure space $(X,\mathscr{X},\mm)$, suppose
a \emph{partition} of $X$ is given into \emph{disjoint} sets $\{ X_{q}\}_{q \in Q}$ so that $X = \cup_{q \in Q} X_q$.
Here $Q$ is the set of indices and $\QQ : X \to Q$ is the quotient map, i.e.
$$
q = \QQ(x) \iff x \in X_{q}.
$$
We endow $Q$ with the \emph{push forward $\sigma$-algebra} $\mathcal{Q}$ of $\mathscr{X}$:
$$
C \in \mathcal{Q} \quad \Longleftrightarrow \quad \QQ^{-1}(C) \in \mathscr{X},
$$
i.e. the biggest $\sigma$-algebra on $Q$ such that $\QQ$ is measurable. Moreover, the push forward measure  $\qq := \QQ_\sharp \, \mm$ defines a natural measure $\qq$ on $(Q,\mathcal{Q})$. The triple  $(Q, \mathcal{Q}, \qq)$ is called  \emph{the quotient measure space}.

\begin{definition}[Consistent and Strongly Consistent Disintegration]
\label{defi:dis}
A \emph{disintegration} of $\mm$ \emph{consistent with the partition} is a map
$$
Q \ni q \longmapsto \mm_{q} \in \mathcal{P}(X,\mathscr{X})
$$
such that the following requirements hold:
\begin{enumerate}
\item  for all $B \in \mathscr{X}$, the map $q \mapsto \mm_{q}(B)$ is $\qq$-measurable;
\item for all $B \in \mathscr{X}$ and $C \in \mathcal{Q}$, the following consistency condition holds:
$$
\mm \left(B \cap \QQ^{-1}(C) \right) = \int_{C} \mm_{q}(B)\, \qq(dq).
$$
\end{enumerate}
A disintegration of $\mm$ is called \emph{strongly consistent} if in addition:
\begin{enumerate}
\item[3.] for $\qq$-a.e. $q \in Q$, $\mm_q$ is concentrated on $X_{q} = \QQ^{-1}(q)$.
\end{enumerate}
\end{definition}

In the next theorem, for $\qq$-a.e. $q\in Q$, the equivalence class $X_{q}$ is a geodesic in $X$. With a slight abuse of notation  $X_{q}$ denotes also the the arc-length parametrization on a real interval of the corresponding geodesic, i.e. it is  a map from a real interval  with image $X_{q}$.  We will use the following terminology: $q \mapsto \mm_{q}$ is a $\CD(K,N)$ disintegration if
for $\qq$-a.e. $q \in Q$, $\mm_{q} = h_{q} \mathcal{H}^{1}\llcorner_{X_{q}}$, where $\mathcal{H}^{1}$ denotes the one-dimensional Hausdorff measure
and $h_{q} \circ X_{q}$  is a $\CD(K,N)$ density, in the sense of \eqref{E:1dCD}.

\begin{theorem}[\cite{CavallettiMondino17a}]\label{T:localize}
Let $(X,\sfd, \mm)$ be an essentially non-branching metric measure space verifying the $\CD(K,N)$ condition for some $K\in \R$ and $N\in [1,\infty)$.
Let $f : X \to \R$ be $\mm$-integrable such that $\int_{X} f\, \mm = 0$ and assume the existence of $x_{0} \in X$ such that $\int_{X} | f(x) |\,  \sfd(x,x_{0})\, \mm(dx)< \infty$.
\medskip

Then the space $X$ admits a partition 
$\{ X_{q} \}_{q \in Q}$ and a corresponding (strongly consistent) disintegration of 
$\mm$, $\{\mm_{q} \}_{q \in Q}$ such that:

\begin{itemize}
\item For any $\mm$-measurable set $B \subset \mathcal{T}$ it holds
$$
\mm(B) = \int_{Q} \mm_{q}(B) \, \qq(dq),
$$
where $\qq$ is a probability measure over $Q$ defined on the quotient $\sigma$-algebra $\mathcal{Q}$.
\medskip
\item For $\qq$-almost every $q \in Q$, the set $X_{q}$ is a geodesic (possibly of zero length) and $\mm_{q}$ is supported on it.
Moreover $q \mapsto \mm_{q}$ is a $\CD(K,N)$ disintegration.
\medskip
\item For $\qq$-almost every $q \in Q$, it holds $\int_{X_{q}} f \, \mm_{q} = 0$. 
\end{itemize}
\end{theorem}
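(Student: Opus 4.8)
The plan is to follow the optimal-transport route to localization: encode the zero-mean constraint in an $L^1$-transport problem, split $X$ along the induced transport rays, and push the curvature-dimension bound down to the one-dimensional conditional measures.

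First I would set up the $L^1$-optimal transport problem. Writing $f=f^+-f^-$ and $m:=\int_X f^+\,\mm=\int_X f^-\,\mm$ (if $m=0$ then $f=0$ $\mm$-a.e. and the statement is trivial), put $\mu_0:=\frac1m f^+\mm$ and $\mu_1:=\frac1m f^-\mm$; the moment hypothesis makes the cost $\int\sfd\,d\pi$ finite. Kantorovich duality for the cost $\sfd$ yields a $1$-Lipschitz potential $\varphi\colon X\to\R$, and one introduces the transport relation $\Gamma:=\{(x,y):\varphi(x)-\varphi(y)=\sfd(x,y)\}$, the transport set $\mathcal{T}$ of points interior to a nontrivial segment along which $\varphi$ decreases at unit speed, and the maximal such segments, the \emph{transport rays}. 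A one-dimensional reduction (monotonicity of $\varphi$ in arclength along a ray) shows each ray is the isometric image of a real interval; on $X\setminus\overline{\mathcal{T}}$ one checks $f=0$ $\mm$-a.e., so those points are assigned to zero-length rays on which every assertion is vacuous.

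Next I would turn $\Gamma$ into an equivalence relation up to an $\mm$-null set. Distinct rays can only meet at branching points, and this is precisely where essential non-branching combined with $\CD(K,N)$ (even $\mathrm{MCP}(K,N)$ suffices: it controls the measure of the set of endpoints and of branching points) is used to show that the union of these bad points is $\mm$-negligible. On the complement the rays form a bona fide partition $\{X_q\}_{q\in Q}$, and the Disintegration Theorem gives a strongly consistent disintegration $\mm\llcorner_{\mathcal{T}}=\int_Q\mm_q\,\qq(dq)$ with $\mm_q$ concentrated on $X_q$; together with the trivial part this is the first bullet. The balance condition $\int_{X_q}f\,\mm_q=0$ then follows from the structure of the optimal plan: restricted to a ray the transport is order-preserving, so a Fubini argument across the disintegration forces the $f^+$- and $f^-$-masses to agree on $\qq$-a.e. ray.

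The main obstacle is the last bullet: showing $q\mapsto\mm_q$ is a $\CD(K,N)$ disintegration, i.e. $\mm_q=h_q\,\mathcal{H}^1\llcorner_{X_q}$ with $h_q$ a $\CD(K,N)$ density in the sense of \eqref{E:1dCD}. The strategy is to localize the defining inequality \eqref{eq:CDKN-def}: given two small balls centered on a fixed ray, build absolutely continuous measures $\mu_0,\mu_1$ supported in thin tubes around the ray near those balls and transported essentially along it, apply the $\CD(K,N)$ inequality in $X$, and let the tube radius shrink; in the limit the $N$-R\'enyi entropy estimate collapses to \eqref{E:1dCD} for $h_q$, for $\qq$-a.e. $q$. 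Making this rigorous requires identifying the disintegration through a measurable ``ray map'' $g\colon\mathcal{D}\subset Q\times[0,\infty)\to\mathcal{T}$, controlling its regularity and Jacobian so that the one-dimensional densities are well defined, and justifying passage to the limit of the entropy functionals — again exploiting essential non-branching to obtain uniqueness of geodesics and the required measurability. One may freely use the equivalence of $\CD(K,N)$ and $\CD^*(K,N)$ for essentially non-branching spaces of finite mass (cf. Remark \ref{rem:RCD}) to simplify the coefficients.
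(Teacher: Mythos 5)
The paper does not actually prove this theorem: it is imported verbatim from \cite{CavallettiMondino17a}, and the paper's own discussion following the statement sketches exactly the $L^1$-optimal-transport construction you describe (Kantorovich potential $\f$, the $\sfd$-cyclically monotone set $\Gamma$ and its maximal chains, negligibility of branching points via essential non-branching together with the measure contraction property, the Disintegration Theorem, and localization of both the curvature-dimension bound and the zero-mean constraint), so your proposal follows essentially the same route. The one caveat is in your final step: in \cite{CavallettiMondino17a} the $\CD(K,N)$ property of the conditional densities is obtained by translating positive-$\qq$-measure families of ray segments along the ray parametrization inside the transport set and using $\sfd$-cyclical monotonicity to force the $W_2$-geodesic between the corresponding normalized restrictions to move along the rays, rather than by shrinking tubes around an individual ray --- a single ray has $\mm$-measure zero, and an optimal transport between measures supported on tubes need not follow the ray, so the tube-limit argument as stated would not by itself identify $\mm_{q}$ for $\qq$-a.e.\ $q$.
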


In \autoref{T:localize} we can also distinguish the set of 
$X_{\alpha}$ having positive length, whose union forms the so-called \emph{transport set} denoted by 
$\mathcal{T}$, 
from the ones having zero length, i.e. points, whose union we usually denote with $Z$,  
so to have a decomposition of $X$ into $\mathcal{T}$ and $Z$. 
The last point of \autoref{T:localize} implies then that $\mm$-a.e. $f \equiv 0$ on $Z$.

Following  the approach of \cite{klartag}, \autoref{T:localize} has been proven in \cite{CavallettiMondino17a} studying the following optimal transportation problem. Let
$\mu_{0} : = f^{+} \mm$ and $\mu_{1} : = f^{-}\mm$, where $f^{\pm}$ denote the positive and the negative part of $f$ respectively, and study the
$L^{1}$-optimal transport problem associated with it
\begin{equation}\label{eq:defL1OT}
\inf \left\{ \int_{X\times X} \sfd(x,y) \, \pi(dxdy) \colon \pi \in \mathcal{P}(X\times X), \ (P_{1})_{\sharp}\pi = \mu_{0}, (P_{2})_{\sharp}\pi = \mu_{1} \right\}.
\end{equation}
Then the relevant object to study is given by the dual formulation of the previous minimization problem.
By the summability properties of $f$ (see the hypotheses of \autoref{T:localize}), there exists a $1$-Lipschitz function $\f : X \to \R$  such that
$\pi$ is a minimizer in \eqref{eq:defL1OT} if and only if $\pi(\Gamma) = 1$, where
$$
\Gamma : = \{ (x,y) \in X \times X  \colon \f(x) - \f(y) = \sfd(x,y)\}
$$
is the naturally associated $\sfd$-cyclically monotone set, i.e. for any $(x_{1},y_{1}), \dots, (x_{n},y_{n}) \in \Gamma$ it holds
$$
\sum_{i = 1}^{n} \sfd(x_{i},y_{i}) \leq \sum_{i = 1}^{n} \sfd(x_{i},y_{i+1}),  \qquad y_{n+1} = y_{1},
$$
for any $n \in \N$.
The set $\Gamma$ induces a partial order relation whose maximal chains produce a partition (up to an $\mm$-negligible subset)  of  the set  $\mathcal{T}\subset X$ appearing in the statement of \autoref{T:localize}, made of one dimensional subsets. For a summary of the constructions see \cite[Section 2.5]{CMM}, for more details see \cite{CavallettiMondino17a,CMi}.


\section{One dimensional estimates}\label{S:1d}
The goal of this section is to give a self-contained presentation of the 1-dimensional estimates we will use throughout the paper. 

\subsection{Berard-Besson-Gallot explicit lower bound on the  model Isoperimetric profile}

For $N>1$, let 
\begin{equation}\label{def:mmN}
\omega_N:=\int_0^{\pi}\left(\sin t\right)^{N-1}\di t  \quad \text{and} \quad \mm_{N}:=\frac{1}{\omega_{N}} (\sin t)^{N-1} \L^{1}(dt)\llcorner[0,\pi].
\end{equation}
From now on fix $D\in (0,\pi)$. For $b\in [0,\pi-D]$ and $v\in [0,1]$,  let $R(b,v)\in [b,\pi]$ be uniquely defined by the equation
\begin{equation}\label{eq:defRbv}
\int_b^{R(b,v)}\left(\sin t\right)^{N-1}\di t=v\int_b^{b+D}\left(\sin t\right)^{N-1}\di t.
\end{equation}
Set
\begin{equation}\label{eq:exprlower}
\mathcal{I}_{N,D}(v):=\inf\left\lbrace g(b,v):\, b\in[0,\pi-D] \right\rbrace, 
\end{equation}
where 
\begin{equation}
g(b,v):=\frac{\left[\sin\left(R(b,v) \right)  \right]^{N-1}}{\int_b^{b+D}\left(\sin t\right)^{N-1}\di t}.
\end{equation}
To keep notation short, we also set $\mathcal{I}_{N}:=\mathcal{I}_{N,\pi}$.  Notice that  $\mathcal{I}_{N}$ is the isoperimetric profile of  ${\mathbb S}^{N}$,  for integer $N$. We refer to \autoref{S:Obatadiam} for a brief discussion about the isoperimetric profile;   note also that $\mathcal{I}_{N,D}$ is  the model isoperimetric profile in the L\'evy-Gromov isoperimetric comparison Theorem for spaces with Ricci $\geq N-1$, dimension $\leq N$ and diameter $\leq D$, see \cite[Appendix C]{Gro}, \cite{BerardBessonGallot85, Milman15, CavallettiMondino17a}.

The proof of the next lemma is inspired by, but somewhat different from,  \cite[Appendix 1]{BerardBessonGallot85} and the statement generalises  to arbitrary real $N>1$ the result stated in the reference for integer $N\geq 2$.

\begin{lemma}[Berard-Besson-Gallot explicit isoperimetric lower bound]\label{thm:improvedLevyGromov1D}
Fix $N>1$ and $D\in [0,\pi]$. and let  $\mathcal{I}_{N,D}:[0,1]\to [0,\infty)$ be defined in \eqref{eq:exprlower}. 
Then
\begin{equation}\label{eq:IND/INpigeq}
\frac{\mathcal{I}_{N,D}(v)}{\mathcal{I}_{N}(v)}\ge   \left(\frac{\int_0^{\frac{\pi}{2}}\left(\cos t\right)^{N-1}\di t}{\int_0^{\frac{D}{2}}\left(\cos t\right)^{N-1}\di t}\right)^{\frac{1}{N}}=:C_{N,D}\ge 1, \quad \forall v\in (0,1)
\end{equation}
\end{lemma}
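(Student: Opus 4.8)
The plan is to prove the inequality \eqref{eq:IND/INpigeq} by reducing it to a monotonicity/comparison statement for the one-parameter families of integrals that define $\mathcal{I}_{N,D}$ and $\mathcal{I}_N$. First I would unwind the definitions: fix $v \in (0,1)$, pick $b \in [0,\pi-D]$ realizing (or nearly realizing) the infimum defining $\mathcal{I}_{N,D}(v)$, and compare $g(b,v)$ with $\mathcal{I}_N(v) = g(0,\pi,v)$, i.e. with the value obtained by taking the full interval $[0,\pi]$ and the distinguished base point $0$. The key observation is that for the full sphere the infimum in \eqref{eq:exprlower} is attained at $b=0$ (geodesic balls around the pole are isoperimetric on $\mathbb{S}^N$), so $\mathcal{I}_N(v) = \dfrac{[\sin R(0,\pi,v)]^{N-1}}{\omega_N}$ where $\int_0^{R(0,\pi,v)}(\sin t)^{N-1}\,dt = v\,\omega_N$.

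The core of the argument will be a pointwise comparison: I want to show that for every admissible $b$,
\begin{equation*}
\frac{[\sin R(b,v)]^{N-1}}{\int_b^{b+D}(\sin t)^{N-1}\,dt} \;\ge\; C_{N,D}\cdot\frac{\mathcal{I}_N(v)}{1} = C_{N,D}\,\frac{[\sin R(0,\pi,v)]^{N-1}}{\omega_N}.
\end{equation*}
The natural route is a change of variables that maps the ``truncated'' problem on $[b,b+D]$ (with the weight $(\sin t)^{N-1}$) onto the model problem on $[0,\pi]$ in a curvature-monotone way. Concretely, I would introduce the increasing bijection $\psi_b\colon [0,\pi] \to [b,b+D]$ determined by equating the normalized cumulative masses, $\int_0^{\psi_b(s)/\ldots}$ — more cleanly, define $\psi_b$ by $\int_b^{\psi_b(s)}(\sin t)^{N-1}\,dt = \big(\int_b^{b+D}(\sin t)^{N-1}\,dt\big)\cdot \frac{1}{\omega_N}\int_0^s(\sin\sigma)^{N-1}\,d\sigma$, so that $\psi_b$ pushes $\mm_N$ forward to the normalized restricted measure and $R(b,v) = \psi_b(R(0,\pi,v))$. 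Then the desired inequality becomes a statement comparing $[\sin \psi_b(s)]^{N-1}/\big(\int_b^{b+D}(\sin t)^{N-1}\big)$ with $[\sin s]^{N-1}/\omega_N$, which via the chain rule reduces to a bound on $\psi_b'$, hence to a differential inequality for the ratio of the densities $(\sin \psi_b(s))^{N-1}$ and $(\sin s)^{N-1}$. The constant $C_{N,D}$ with the cosine integrals should emerge by evaluating this comparison at the ``worst'' endpoint, which by concavity/log-concavity considerations is $s = \pi/2$ (equivalently $v = 1/2$, the equatorial section), and the identity $\int_0^{\pi/2}(\cos t)^{N-1}\,dt$ vs $\int_0^{D/2}(\cos t)^{N-1}\,dt$ comes from the substitution $t \mapsto \pi/2 - t$ applied to $(\sin t)^{N-1}$ near the midpoint.

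The step I expect to be the main obstacle is establishing the monotonicity that pins down $b=0$ (resp.\ $b = \pi - D$, by symmetry) as the extremizer and controls the direction of the inequality after the change of variables: one must show that shrinking the interval from $[0,\pi]$ to a sub-interval $[b,b+D]$ can only \emph{increase} the normalized boundary-to-volume ratio $g$, and quantify the gain. This is essentially a calculus lemma about the function $b \mapsto g(b,v)$, and handling it for arbitrary real $N>1$ (rather than integer $N$, as in \cite{BerardBessonGallot85}) requires care since one cannot appeal to the spherical geometry directly; instead I would differentiate $\log g(b,v)$ in $b$, using $\partial_b R(b,v)$ computed from implicit differentiation of \eqref{eq:defRbv}, and reduce the sign of the derivative to an inequality of the form $(N-1)\cot R(b,v)\,\partial_b R(b,v) \le \partial_b \log\!\big(\int_b^{b+D}(\sin t)^{N-1}\big)$, which in turn follows from the log-concavity of $(\sin t)^{N-1}$ on $[0,\pi]$ together with the mass-matching constraint. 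Once the derivative is shown to have a sign forcing the infimum to the boundary, evaluating at $b=0$ and isolating the equatorial worst case yields exactly $C_{N,D}$, and $C_{N,D}\ge 1$ is immediate since $D \le \pi$ makes the denominator integral no larger than the numerator.
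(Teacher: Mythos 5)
Your reduction to the pointwise bound $g(b,v)\ge C_{N,D}\,\mathcal I_N(v)$ is the right target, and the mass-matching map $\psi_b$ is a legitimate reformulation (the desired inequality is equivalent to $\sup_s\psi_b'(s)\le C_{N,D}^{-1}$). However, the argument you build on top of it rests on a false claim: the infimum of $b\mapsto g(b,v)$ is \emph{not} attained at the boundary $b=0$ (or $b=\pi-D$), so the monotonicity you plan to extract by differentiating $\log g(b,v)$ in $b$ cannot hold. For example, with $N=2$, $D=\pi/2$, $v=1/2$: at $b=0$ one finds $R(0,1/2)=\pi/3$ and $g(0,1/2)=\sin(\pi/3)/\int_0^{\pi/2}\sin t\,\di t=\sqrt3/2\approx0.866$, while at the centred position $b=\pi/4$ one has $R=\pi/2$ and $g(\pi/4,1/2)=1/\int_{\pi/4}^{3\pi/4}\sin t\,\di t=1/\sqrt2\approx0.707$. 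Thus $g(\cdot,1/2)$ is minimised in the interior, at the centred interval --- which is also where the constant $C_{N,D}$ actually comes from: $2\int_0^{D/2}(\cos t)^{N-1}\di t=\int_{\pi/2-D/2}^{\pi/2+D/2}(\sin t)^{N-1}\di t$ is the \emph{maximum} over $b$ of $\int_b^{b+D}(\sin t)^{N-1}\di t$, not a quantity attached to $b=0$.

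The second gap is that you never supply the analytic ingredient that makes the bound quantitative. The paper's proof does not locate the minimising $b$ at all; it proves, uniformly in $b$ and $v$, that $g(b,v)/\mathcal I_N(v)\ge\omega_N^{1/N}\bigl(\int_b^{b+D}(\sin t)^{N-1}\di t\bigr)^{-1/N}$, and only then maximises the denominator over $b$. The engine for that uniform bound is the concavity of $\mathcal I_N^{N/(N-1)}$ (from the ODE $(\mathcal I_N^{N/(N-1)})''\,\mathcal I_N^{(N-2)/(N-1)}=-N$), which yields $\mathcal I_N(v')/\mathcal I_N(v)\ge(v'/v)^{1-1/N}$ for the auxiliary volume $v':=\omega_N^{-1}\int_0^{R(b,v)}(\sin t)^{N-1}\di t$ in the case $v'\le v$, combined with the symmetry $f(b,v)=f(\pi-b-D,1-v)$ to dispose of the case $v'>v$. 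In your scheme this corresponds exactly to the unproved claim $\psi_b'\le C_{N,D}^{-1}$, which is essentially a restatement of the lemma rather than a reduction of it; without something playing the role of the concavity of $\mathcal I_N^{N/(N-1)}$, the proof does not close.
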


\begin{proof}
Let $v'\in (0,1)$ and $f:[0,\pi-D]\times(0,1)\to[0,+\infty)$ be defined by
\begin{equation}\label{eq:defv'f}
v':=\frac{1}{\omega_N}\int_0^{R(b,v)}\left(\sin t\right)^{N-1}\di t\quad \text{and}\quad f(b,v):=\frac{g(b,v)}{\mathcal{I}_N(v)}.
\end{equation}
In particular 
\begin{equation}
\mathcal{I}_N(v')=\frac{1}{\omega_N}\left[\sin R(b,v)\right]^{N-1}
\end{equation}
and therefore
\begin{equation}\label{eq:exprf}
f(b,v)=\omega_N\left(\int_b^{b+D}\left(\sin t\right)^{N-1}\di t\right)^{-1}\frac{\mathcal{I}_N(v')}{\mathcal{I}_N(v)}.
\end{equation}
Thanks to the explicit expression of the isoperimetric profile $\mathcal{I}_N$ it is possible to compute
\begin{equation}\label{eq:diffequmodel}
\left(\mathcal{I}_N^{\frac{N}{N-1}}\right)''\mathcal{I}_{N}^{\frac{N-2}{N-1}}=-N.
\end{equation} 
In particular it follows from \eqref{eq:diffequmodel} that $\mathcal{I}_N^{\frac{N}{N-1}}$ is concave on $(0,1)$.
\\We now distinguish two cases:  $v'\le v$ and  $v'> v$.

\textbf{Case 1}: $v'\le v$.
\\First observe that
\begin{equation}\label{eq:int1}
\omega_Nv'= \int_0^{R(b,v)}\left(\sin t\right)^{N-1}\di t \ge\int_b^{R(b,v)}\left(\sin t\right)^{N-1}\di t=v\int_b^{b+D}\left(\sin t\right)^{N-1}\di t.
\end{equation}
The concavity observed above together with \eqref{eq:int1} give that
\begin{equation*}
\frac{\mathcal{I}_N(v')}{\mathcal{I}_N(v)}\ge \left(\frac{v'}{v}\right)^{1-\frac{1}{N}}\ge \left(\omega_N^{-1}\int_b^{b+D}\left(\sin t\right)^{N-1}\di t\right)^{1-\frac{1}{N}}.
\end{equation*}
Hence, taking into account \eqref{eq:exprf}, we obtain
\begin{equation}\label{eq:firstcase}
f(b,v)\ge \omega_N^{\frac{1}{N}}\left(\int_b^{b+D}\left(\sin t\right)^{N-1}\di t\right)^{-\frac{1}{N}}.
\end{equation}

\textbf{Case 2}: $v'> v$.
\\A change of variables in the definition of $R$ easily yields
\begin{equation*}
R(\pi-b-D,1-v)=\pi-R(b,v)
\end{equation*}
and therefore
\begin{equation}\label{eq:simm}
f(b,v)=f(\pi-b-D,1-v).
\end{equation}
Moreover
\begin{equation*}
\int_0^{R(\pi-b-D,1-v)}\left(\sin t\right)^{N-1}\di t=\int_{R(b,v)}^{\pi}\left(\sin t\right)^{N-1}\di t=(1-v')\omega_N,
\end{equation*}
hence 
\begin{equation}\label{eq:int2}
f(\pi-b-D,1-v)=\omega_N\left(\int_b^{b+D}\left(\sin t\right)^{N-1}\right)^{-1}\frac{\mathcal{I}_N(1-v')}{\mathcal{I}_N(1-v)}.
\end{equation}
Next we observe that, as in the previous case, the concavity of $\mathcal{I}_N^{\frac{N}{N-1}}$ yields
\begin{equation}\label{eq:consconc2}
\frac{\mathcal{I}_N(1-v')}{\mathcal{I}_N(1-v)}\ge \left(\frac{1-v'}{1-v}\right)^{1-\frac{1}{N}}.
\end{equation}
Moreover, it holds
\begin{equation}\label{eq:int3}
\omega_N(1-v')=  \int_{R(b,v)}^{\pi}\left(\sin t\right)^{N-1}\di t  \ge \int_{R(b,v)}^{b+D}\left(\sin t\right)^{N-1}\di t=\left(1-v\right)\int_b^{b+D}\left(\sin t\right)^{N-1}\di t.
\end{equation}
Combining \eqref{eq:int2}, \eqref{eq:consconc2} and \eqref{eq:int3} and taking into account \eqref{eq:simm}, we get
\begin{equation}\label{eq:case2}
f(b,v)\ge \omega_N^{\frac{1}{N}}\left(\int_b^{b+D}\left(\sin t\right)^{N-1}\di t\right)^{-\frac{1}{N}}.
\end{equation}

It is now sufficient to observe that the function $x\mapsto\int_x^{x+D}\left(\sin t\right)^{N-1}\di t$ attains its maximum at $x=\frac{\pi}{2}-\frac{D}{2}$ in order to obtain from \eqref{eq:firstcase}, \eqref{eq:case2}, \eqref{eq:defv'f} and \eqref{eq:exprlower} that
\begin{equation*}
\frac{\mathcal{I}_{N,D}(v)}{\mathcal{I}_N(v)}\ge \left(\frac{\omega_N}{\int_{\frac{\pi}{2}-\frac{D}{2}}^{\frac{\pi}{2}+\frac{D}{2}}(\sin t)^{N-1}\di t}\right)^{\frac{1}{N}}=C_{N,D}, \quad \forall v\in(0,1).
\end{equation*}
Above, the last identity follows from the expression for $C_{N,D}$ introduced in \eqref{eq:IND/INpigeq} thanks to the identity $\cos(\pi/2-x)=\sin(x)$ and a change of variables.
\end{proof}

Let us study the behaviour of $C_{N,D}$ in the asymptotic $D\to \pi$.

\begin{lemma}\label{prop:asymptoticC}
It holds that
\begin{equation}\label{CND2Dtopi}
\lim_{D\to \pi} \frac{(\pi-D)^{N}}{C_{N,D}^{2}-1}=  2^{N-1}N^{2} \int_0^{\frac{\pi}{2}} \left(\cos t\right)^{N-1}\di t.
\end{equation}
Hence there exist $\bar{C}=\bar{C}(N)>0$ and $\bar{D}=\bar{D}(N)<\pi$ such that
\begin{equation}\label{eq:estimateC}
C_{N,D}^{2}-1\ge \bar{C}(\pi-D)^{N}, \quad \forall D\in [\bar{D}, \pi].
\end{equation}
\end{lemma}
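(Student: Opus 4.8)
The plan is to treat \eqref{CND2Dtopi} as a Taylor-expansion computation in the variable $\eps := \pi - D \to 0^+$, and then deduce \eqref{eq:estimateC} immediately from it. Recall from \autoref{thm:improvedLevyGromov1D} that
\[
C_{N,D} = \left(\frac{\int_0^{\pi/2}(\cos t)^{N-1}\di t}{\int_0^{D/2}(\cos t)^{N-1}\di t}\right)^{1/N},
\]
so that, writing $A := \int_0^{\pi/2}(\cos t)^{N-1}\di t$ and $\Phi(\eps) := \int_{D/2}^{\pi/2}(\cos t)^{N-1}\di t = \int_{(\pi-\eps)/2}^{\pi/2}(\cos t)^{N-1}\di t$, we have $\int_0^{D/2}(\cos t)^{N-1}\di t = A - \Phi(\eps)$ and hence
\[
C_{N,D}^N = \frac{A}{A - \Phi(\eps)} = \frac{1}{1 - \Phi(\eps)/A}.
\]
The key estimate is the asymptotics of $\Phi(\eps)$: substituting $t = \pi/2 - s$, $\Phi(\eps) = \int_0^{\eps/2}(\sin s)^{N-1}\di s$, and since $\sin s = s + O(s^3)$ near $0$, one gets $\Phi(\eps) = \frac{1}{N}(\eps/2)^N + o(\eps^N) = \frac{\eps^N}{N\,2^N} + o(\eps^N)$ as $\eps \to 0$.

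Next I would expand $C_{N,D}^2$. From $C_{N,D}^N = (1 - \Phi/A)^{-1} = 1 + \Phi/A + o(\Phi)$ we get $C_{N,D} = (1 + \Phi/A + o(\Phi))^{1/N} = 1 + \frac{1}{N}\frac{\Phi}{A} + o(\Phi)$, and therefore
\[
C_{N,D}^2 = 1 + \frac{2}{N}\frac{\Phi(\eps)}{A} + o(\Phi(\eps)), \qquad \text{so}\qquad C_{N,D}^2 - 1 = \frac{2}{N A}\,\Phi(\eps) + o(\eps^N).
\]
Plugging in $\Phi(\eps) \sim \frac{\eps^N}{N 2^N}$ gives $C_{N,D}^2 - 1 \sim \frac{2}{N A}\cdot\frac{\eps^N}{N 2^N} = \frac{\eps^N}{N^2 A\, 2^{N-1}}$, which rearranges to
\[
\frac{\eps^N}{C_{N,D}^2 - 1} \longrightarrow N^2\,2^{N-1}\,A = 2^{N-1}N^2\int_0^{\pi/2}(\cos t)^{N-1}\di t,
\]
which is exactly \eqref{CND2Dtopi}. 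Finally, \eqref{eq:estimateC} follows: the limit in \eqref{CND2Dtopi} is a finite positive constant, so there is $\bar D(N) < \pi$ such that for all $D \in [\bar D, \pi]$ one has $\frac{(\pi-D)^N}{C_{N,D}^2-1} \le 2\cdot 2^{N-1}N^2 A$, i.e. $C_{N,D}^2 - 1 \ge \bar C (\pi-D)^N$ with $\bar C := (2^N N^2 A)^{-1}$ (and for $D = \pi$ both sides are $0$, or one simply restricts to $D < \pi$).

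The computation is entirely routine; the only point requiring a little care is making the error terms rigorous rather than merely formal. I would phrase this by noting that $t \mapsto (\cos t)^{N-1}$ is continuous and positive on $[0,\pi/2)$ with value $1$ at $0$, so $\Phi(\eps) = \int_0^{\eps/2}(\sin s)^{N-1}\di s$ and, since $(\sin s)^{N-1}/s^{N-1} \to 1$ as $s \to 0$, a direct application of L'Hôpital (or dominated convergence after rescaling $s = (\eps/2)\sigma$) gives $\Phi(\eps) \big/ (\eps/2)^N \to 1/N$. Likewise the single use of L'Hôpital on the ratio $\big(C_{N,D}^2 - 1\big)\big/\eps^N$ — differentiating numerator and denominator in $D$ — bypasses the expansions above and delivers \eqref{CND2Dtopi} in one stroke; this is probably the cleanest way to write it up. There is no genuine obstacle here: the lemma is a purely one-variable asymptotic fact, included to quantify the gain $C_{N,D} > 1$ from \autoref{thm:improvedLevyGromov1D} in terms of the diameter deficit.
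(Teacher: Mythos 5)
Your proposal is correct and follows essentially the same route as the paper: both reduce to the asymptotics $\int_{D/2}^{\pi/2}(\cos t)^{N-1}\di t=\int_0^{(\pi-D)/2}(\sin s)^{N-1}\di s\sim\frac{1}{N}\left(\frac{\pi-D}{2}\right)^{N}$ and the first-order expansion $(1+x)^{2/N}-1\sim\frac{2}{N}x$, yielding the stated limit and then \eqref{eq:estimateC} as an immediate consequence. The constants check out, so nothing further is needed.
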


\begin{proof}
Recalling the expression of $C_{N,D}$ from \eqref{eq:IND/INpigeq}, we have
\begin{align*}
C_{N,D}^{2}-1=&\left(\frac{\int_0^{\frac{\pi}{2}}\left(\cos t\right)^{N-1}\di t}{\int_0^{\frac{D}{2}}\left(\cos t\right)^{N-1}\di t}\right)^{\frac{2}{N}}-1
=\left(1+\frac{\int_{\frac{D}{2}}^{\frac{\pi}{2}}\left(\cos t\right)^{N-1}\di t}{\int_0^{\frac{D}{2}}\left(\cos t\right)^{N-1}\di t}\right)^{\frac{2}{N}}-1.
\end{align*}
Now, as $D\to \pi$, we have the expansion 
\begin{equation*}
\int_{\frac{D}{2}}^{\frac{\pi}{2}}\left(\cos t\right)^{N-1}\di t=\int_{0}^{\frac{\pi}{2}-\frac{D}{2}}\left(\sin t\right)^{N-1}\di t\sim \int_0^{\frac{\pi}{2}-\frac{D}{2}}s^{N-1}\di s\sim \frac{1}{N} \left(\frac{\pi}{2}-\frac{D}{2} \right)^{N}.
\end{equation*}
Taking into account the asymptotic $(1+x)^{\beta}-1\sim\beta x$, we obtain \eqref{CND2Dtopi}.
\\The second conclusion in the statement easily follows from the first one.
\end{proof}


\subsection{Spectral gap and diameter}

Building on top of the lower bound of the  isoperimetric  profile obtained in \autoref{thm:improvedLevyGromov1D}, we next obtain  a quantitative spectral gap inequality for Neumann boundary conditions in terms diameters.

The analogous result in the case of smooth Riemannian manifolds was established in \cite[Theorem B]{Croke82} building upon a quantitative improvement of the Lévy-Gromov inequality and on \cite{BerardMeyer82} (see also \cite[Corollary 17]{BerardBessonGallot85}).
The usual strategy to show the improved Neumann spectral gap inequality is based on the observation that a Neumann first eigenfunction of the Laplacian $f$ is a Dirichlet first eigenfunction of the Laplacian on the domains $\set{f>0}$ and $\set{f<0}$ (cf. \cite[Lemma 3.2]{Matei00}). The improved Dirichlet spectral gap inequality is then obtained by rearrangement starting from the isoperimetric inequality.

\begin{proposition}[1-Dimensional Quantitative Obata's Theorem on the diameter]\label{thm:improvedspectralNeumann1D}
Let $(I,\dist_{eucl},\meas)$ be a one dimensional $\CD(N-1,N)$ m.m.s. with $\diam(I)\leq D$. Then
\begin{equation}
\frac{ \lambda^{1,2}_{(I,\dist_{eucl},\meas)}}{N}\ge C_{N,D}^2
= \left(\frac{\int_0^{\frac{\pi}{2}}\left(\cos t\right)^{N-1}\di t}{\int_0^{\frac{D}{2}}\left(\cos t\right)^{N-1}\di t}\right)^{\frac{2}{N}},
\end{equation}
where $C_{N,D}$ was defined in \eqref{eq:IND/INpigeq}.
\\In particular, there exists a constant $C_{N}>0$ (more precisely one can choose $C_{N}=\bar{C} N$ where  $\bar{C}$ was defined in  \autoref{prop:asymptoticC}) such that
\begin{equation}\label{eq:1DQuantObata}
C_{N}(\pi-\diam(I))^N\le  \lambda^{1,2}_{(I,\dist_{eucl},\meas)}-N.
\end{equation} 
\end{proposition}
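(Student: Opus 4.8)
The statement packages two things: a sharp comparison $\lambda^{1,2}_{(I,\dist_{eucl},\meas)} \ge N\,C_{N,D}^2$, and the polynomial deficit bound \eqref{eq:1DQuantObata}, which is an immediate consequence of the first inequality and the asymptotic estimate \eqref{eq:estimateC} of \autoref{prop:asymptoticC} (taking $D = \diam(I)$). So the work is entirely in proving the comparison $\lambda^{1,2} \ge N\,C_{N,D}^2$. I would follow the rearrangement strategy recalled in the paragraph preceding the statement: reduce the Neumann eigenvalue problem on $(I,\meas)$ to a pair of Dirichlet problems, then use the isoperimetric lower bound \autoref{thm:improvedLevyGromov1D} to control the Dirichlet eigenvalues from below by the corresponding model quantities.

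\textbf{Step 1: from Neumann to Dirichlet.} Let $u\in\Lip(I)$ achieve (or nearly achieve) the infimum in the definition \eqref{eq:defLambda12} of $\lambda^{1,2}$, so $\int_I u\,\meas = 0$ and $\int_I |\nabla u|^2\meas = \lambda^{1,2}\int_I u^2\meas$. Since $u$ has zero mean it changes sign; split $I$ into the (relatively open) sets $I_+ = \{u>0\}$ and $I_- = \{u<0\}$. On each of these, $u$ (resp. $-u$) is an admissible test function for the \emph{Dirichlet} Rayleigh quotient, because it vanishes on the boundary of $I_\pm$ inside $I$. A standard argument (cf. \cite[Lemma 3.2]{Matei00}) shows
\[
\lambda^{1,2}_{(I,\dist_{eucl},\meas)} \;\ge\; \min\left\{ \lambda^{D}_{(I_+,\meas\restr I_+)},\ \lambda^{D}_{(I_-,\meas\restr I_-)}\right\},
\]
where $\lambda^D_{(\Omega,\meas)} := \inf\{\int_\Omega |\nabla v|^2\meas / \int_\Omega v^2\meas : v\in\Lip_c(\Omega),\ v\neq 0\}$ is the first Dirichlet eigenvalue. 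So it suffices to bound each one-sided Dirichlet eigenvalue below by $N\,C_{N,D}^2$.

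\textbf{Step 2: Dirichlet eigenvalue bound via isoperimetry.} Here I would use the classical Faber–Krahn / P\'olya–Szeg\H{o} symmetrization argument, now in the one-dimensional $\CD(N-1,N)$ setting. Given the nonnegative test function $v = u\restr I_+$ (extended by zero), consider its level sets $\{v > s\}$. The one-dimensional model isoperimetric profile relevant here is $\mathcal{I}_{N,D}$ from \eqref{eq:exprlower}, and by the localization-type isoperimetric comparison (which holds because $(I,\meas)$ is $\CD(N-1,N)$ with $\diam(I)\le D$) one has $\meas^+(\partial^*\{v>s\}) \ge \mathcal{I}_{N,D}(\meas(\{v>s\}))$ for a.e. $s$. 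Rearranging $v$ onto the model space $([0,\cdot],\meas_N)$-type profile and invoking the coarea formula together with the Cauchy–Schwarz / Pólya–Szegő inequality reduces the Rayleigh quotient of $v$ to that of a monotone rearrangement $v^*$ on a model one-dimensional space whose density realizes $\mathcal{I}_{N,D}$. The key quantitative input is \autoref{thm:improvedLevyGromov1D}: $\mathcal{I}_{N,D}(\cdot) \ge C_{N,D}\,\mathcal{I}_N(\cdot)$ pointwise. Since the model Dirichlet eigenvalue associated with $\mathcal{I}_N$ equals $N$ (this is the Obata model computation, traceable to \eqref{eq:diffequmodel}), scaling the profile by the constant factor $C_{N,D}$ scales the Rayleigh quotient lower bound by $C_{N,D}^2$, giving $\lambda^D \ge N\,C_{N,D}^2$. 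Combining with Step 1 and then with \eqref{eq:estimateC} (with $\bar C, \bar D$ from \autoref{prop:asymptoticC}) produces \eqref{eq:1DQuantObata} with $C_N = \bar C N$, after noting that for $\diam(I)\le\bar D$ the left side of \eqref{eq:1DQuantObata} is controlled trivially since $\lambda^{1,2}\ge N$ always and $\pi-\diam(I)$ is bounded.

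\textbf{Main obstacle.} The delicate point is Step 2: making the rearrangement/symmetrization argument rigorous in the possibly non-smooth one-dimensional $\CD(N-1,N)$ setting — in particular, justifying the coarea formula, the isoperimetric inequality at the level of the profile $\mathcal{I}_{N,D}$, and the Pólya–Szegő inequality with the correct constant, for merely Lipschitz $v$ and a density $h$ that is only locally semiconcave. The reduction in Step 1 is routine, and the passage from the profile comparison to the eigenvalue comparison is a clean scaling once the symmetrization is in place; but keeping track of the exact constant $C_{N,D}^2$ (rather than some $N$-dependent loss) through the rearrangement is what requires care, and is precisely where the sharp form \eqref{eq:IND/INpigeq} of the Berard–Besson–Gallot bound gets used.
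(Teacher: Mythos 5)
Your reduction of \eqref{eq:1DQuantObata} to the main inequality via \autoref{prop:asymptoticC} is exactly what the paper does, and the isoperimetric input (\autoref{thm:improvedLevyGromov1D}, the coarea/rearrangement comparison, and the identity $\lambda^{1,2,\mathcal{D}}_{N,\pi}(1/2)=N$) is also the right one. But there is a genuine gap in your Step 1, and it propagates into Step 2. The inequality $\lambda^{1,2}\ge\min\{\lambda^{D}(I_+),\lambda^{D}(I_-)\}$ is the version that holds for an arbitrary test function (it is just the mediant inequality), but it is too weak: you have no control on $\meas(I_\pm)$, and since $\meas(I_+)+\meas(I_-)\le 1$ one of the two nodal domains may have measure arbitrarily close to $1$. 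For such a domain the Faber--Krahn comparison only yields $\lambda^{D}\ge C_{N,D}^2\,\lambda^{1,2,\mathcal{D}}_{N,\pi}(v)$ with $v>1/2$, and $\lambda^{1,2,\mathcal{D}}_{N,\pi}(v)<N$ for $v>1/2$ (indeed it tends to $0$ as $v\to 1$, since the endpoint of the model density $\sin^{N-1}$ has vanishing capacity). So your claim that ``the model Dirichlet eigenvalue associated with $\mathcal{I}_N$ equals $N$'' is only true for the half-measure cap, and the minimum in your Step 1 may well be attained by the large nodal domain. To repair this you would need the \emph{max} version --- that the restriction of a genuine first Neumann eigenfunction to \emph{each} nodal domain realizes the Dirichlet eigenvalue there, so that you may pick the domain of measure $\le 1/2$ --- and that requires existence and enough regularity of an eigenfunction on a possibly non-smooth one-dimensional $\CD(N-1,N)$ space, which you have not addressed.

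The paper sidesteps all of this by first invoking the Bakry--Qian comparison to get $\lambda^{1,2}_{(I,\dist_{eucl},\meas)}\ge\lambda^{1,2}_{N,D}$, where $\lambda^{1,2}_{N,D}$ is the first Neumann eigenvalue of the \emph{explicit} model $\big([-D/2,D/2],\dist_{eucl},\Lambda_{N,D}\cos^{N-1}(t)\L^1\big)$. On that model the first eigenfunction is odd by symmetry, so its nodal domains are $[-D/2,0]$ and $[0,D/2]$, each of measure exactly $1/2$; this yields $\lambda^{1,2}_{N,D}\ge\lambda^{1,2,\mathcal{D}}_{N,D}(1/2)$ with the half-measure normalization obtained for free, and only then is the coarea argument with \autoref{thm:improvedLevyGromov1D} applied, giving $\lambda^{1,2,\mathcal{D}}_{N,D}(1/2)\ge C_{N,D}^2\,\lambda^{1,2,\mathcal{D}}_{N,\pi}(1/2)=N\,C_{N,D}^2$. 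In short: the missing ingredient in your argument is the Bakry--Qian reduction (or some substitute providing the volume normalization $1/2$ of the relevant nodal domain), without which the nodal-domain/Faber--Krahn chain does not close.
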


\begin{proof}
From  \cite{BakryQian00} (see also  \cite[Section 4.1]{CavallettiMondino17b} for the regularization procedure) we know that  $\lambda^{1,2}_{(I,\dist_{eucl},\meas)}\geq  \lambda^{1,2}_{N,D}$ where $\lambda^{1,2}_{N,D}$ is the first solution $\lambda>0$ of the eigenvalue problem
\begin{equation}\label{eq:eigenvalueproblem}
\ddot{w}+(N-1)\tan (t)\dot{w}+\lambda w=0,
\end{equation}
on $[-D/2,D/2]$ with Neumann boundary conditions.
The eigenfunction associated to the first eigenvalue in \eqref{eq:eigenvalueproblem} is unique, up to a multiplicative constant. Therefore, denoting it by $w_{N,D}:[-D/2,D/2]\to (-\infty,+\infty)$, it holds $w_{N,D}(-x)=-w_{N,D}(x)$ for any $x\in[-D/2,D/2]$ as a consequence of the symmetry of \eqref{eq:eigenvalueproblem}. In particular $w_{N,D}(0)=0$. 
Let
$$\meas_{N,D}:=\Lambda_{N,D}\left(\cos t\right)^{N-1}\L^1\res[-D/2,D/2],$$ 
with $\Lambda_{N,D}$ such that $\meas_{N,D}$ is a probability measure. Note that $\left([-D/2,D/2],\dist_{eucl},\meas_{N,D}\right)$ is a $\CD(N-1,N)$ m.m.s. with diameter equal to $D$ and $\meas_{N,D}([-D/2,0])=\meas_{N,D}([0, D/2])=1/2$. Hence
\begin{equation*}
\lambda^{1,2}_{N,D}=\frac{\int_{-D/2}^{D/2}\abs{w_{N,D}'}^2\mm_{N,D}}{\int_{-D/2}^{D/2}\abs{w_{N,D}}^2\mm_{N,D}}=\frac{\int_{0}^{D/2}\abs{w_{N,D}'}^2\mm_{N,D}}{\int_{0}^{D/2}\abs{w_{N,D}}^2\mm_{N,D}}\geq \lambda^{1,2, \mathcal{D}}_{N,D}(1/2),
\end{equation*}  
where $\lambda^{1,2, \mathcal{D}}_{N,D}(1/2)$ is the least first eigenvalue of the Laplacian with Dirichlet boundary conditions on one extremum for intervals of volume $1/2$ in  $\left([-D/2,D/2],\dist_{eucl},\meas_{N,D}\right)$.
\\Moreover a co-area argument (see for instance \cite[Corollary 17]{BerardBessonGallot85} or \cite[Proposition 3.13]{MondinoSemola18}) using \autoref{thm:improvedLevyGromov1D}  gives
\begin{equation*}
\lambda^{1,2, \mathcal{D}}_{N,D}(1/2)\ge C_{N,D}^2 \, \lambda^{1,2, \mathcal{D}}_{N,\pi}(1/2).
\end{equation*}
Recalling that $ \lambda^{1,2, \mathcal{D}}_{N,\pi}(1/2)=\lambda^{1,2}_{N, \pi}=N$ (see for instance  \cite{BakryQian00}), we conclude that 
\begin{equation}\label{eq:la12NDC2ND}
\lambda^{1,2}_{(I,\dist_{eucl},\meas)}\geq \lambda^{1,2}_{N,D} \geq \lambda^{1,2, \mathcal{D}}_{N,D}(1/2)  \geq N\,C_{N,D}^2.
\end{equation}
The second part of the statement follows by choosing $D=\diam(I)$ and applying  \autoref{prop:asymptoticC}.
\end{proof}

A converse of the inequality proved in \autoref{thm:improvedspectralNeumann1D} 
can be obtained as follows.
 
\begin{lemma}\label{lem:BigDLapi}
For any $N>1$ there exists $C=C(N)>0$  such that if $([0,D],\sfd_{eucl}, \mm)$ is a one dimensional $\CD(N-1,N)$ m.m.s. with $D \geq \pi-\epsilon$ then
\begin{equation*}
\left|\lambda^{1,2}_{([0,D],\sfd_{eucl}, \mm)}-N \right|  \leq  C \epsilon.
\end{equation*} 
\end{lemma}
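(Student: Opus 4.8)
The plan is to pair the lower bound $\lambda^{1,2}_{([0,D],\sfd_{eucl},\mm)}\ge N$ — which is already contained in \autoref{thm:improvedspectralNeumann1D}, since $C_{N,D}\ge 1$, and in particular shows that $|\lambda^{1,2}-N|=\lambda^{1,2}-N$ — with a matching \emph{upper} bound obtained by inserting an explicit competitor into the Rayleigh quotient \eqref{eq:defLambda12}. So it suffices to prove $\lambda^{1,2}_{([0,D],\sfd_{eucl},\mm)}\le N+C\epsilon$, and since this estimate rests on \autoref{cor:estintegrals} (and is of interest only in that regime) we may assume $\epsilon\le\epsilon_0(N)$. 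Writing $\mm=h\,\L^1$ with $h$ a $\CD(N-1,N)$ density on $[0,D]$ integrating to $1$, I would take as test function
\[
u(t):=\cos(t)-\bar c,\qquad \bar c:=\int_0^D\cos(t)\,\mm(dt),
\]
which is Lipschitz on $[0,D]$, has $\int_0^D u\,\mm=0$, and is not $\mm$-a.e.\ constant because $h>0$ on $(0,D)$ and $\cos$ is strictly monotone there. Since the slope of a $C^1$ function on an interval is the modulus of its derivative, $|\nabla u|^2(t)=\sin^2 t$, and the Rayleigh quotient gives
\[
\lambda^{1,2}_{([0,D],\sfd_{eucl},\mm)}\ \le\ \frac{\int_0^D\sin^2(t)\,\mm(dt)}{\int_0^D(\cos t-\bar c)^2\,\mm(dt)}\ =\ \frac{\int_0^D\sin^2(t)\,\mm(dt)}{\int_0^D\cos^2(t)\,\mm(dt)-\bar c^{\,2}}.
\]

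Next I would estimate numerator and denominator by comparison with the model density $h_N$, applying \autoref{cor:estintegrals} to the bounded Borel functions $\cos,\cos^2,\sin^2\colon[0,\pi]\to[-1,1]$. Since $\int_0^\pi\cos\,\mm_N=0$ by symmetry, this yields $|\bar c|\le C(N)\epsilon$, hence $\bar c^{\,2}\le C(N)\epsilon$ for $\epsilon\le 1$; likewise $\int_0^D\sin^2\,\mm=\int_0^\pi\sin^2\,\mm_N+{\rm O}(\epsilon)$ and $\int_0^D\cos^2\,\mm=\int_0^\pi\cos^2\,\mm_N+{\rm O}(\epsilon)$. The key algebraic input is that on the model space $\cos$ is the first eigenfunction with eigenvalue $N$: integrating by parts the identity $\cos''+(N-1)\cot t\,\cos'=-N\cos$ against $\cos\,h_N$ and using $h_N(0)=h_N(\pi)=0$ gives $\int_0^\pi\sin^2\,\mm_N=N\int_0^\pi\cos^2\,\mm_N=:NB$ with $B>0$. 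Substituting, the right-hand side above is at most $\dfrac{NB+C\epsilon}{B-C\epsilon}$, and once $\epsilon\le B/(2C)$ (absorbed into $\epsilon_0$) this is $\le N+C'(N)\,\epsilon$. Together with $\lambda^{1,2}_{([0,D],\sfd_{eucl},\mm)}\ge N$ this proves $|\lambda^{1,2}_{([0,D],\sfd_{eucl},\mm)}-N|\le C'(N)\epsilon$.

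There is no real obstacle in this argument: the only points that need a line of care are checking that $u$ is admissible in \eqref{eq:defLambda12} — which follows once one notes $\int_0^D u^2\,\mm=B-\bar c^{\,2}+{\rm O}(\epsilon)\ge B/2>0$ for $\epsilon$ small, so in particular the denominator in the quotient is positive and $u\not\equiv 0$ — the bookkeeping of the ${\rm O}(\epsilon)$ errors coming from \autoref{cor:estintegrals}, and the model identity $\int_0^\pi\sin^2\,\mm_N=N\int_0^\pi\cos^2\,\mm_N$. The remaining step is the elementary estimate of $\dfrac{NB+C\epsilon}{B-C\epsilon}$ by $N+C'\epsilon$.
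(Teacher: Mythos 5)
Your proposal is correct and follows essentially the same route as the paper: the lower bound $\lambda^{1,2}\ge N$ from the (quantitative) Lichnerowicz inequality, and the upper bound by testing the Rayleigh quotient with the cosine shifted to have zero mean, with all integrals controlled via \autoref{cor:estintegrals}. The only cosmetic difference is that the paper normalizes the competitor to unit $L^2$-norm while you keep the quotient explicit; the estimates are identical.
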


\begin{proof}
By Lichnerowicz spectral gap we already know that $\lambda^{1,2}_{([0,D],\sfd_{eucl}, \mm)}\geq N$. It is therefore enough to prove the existence of $u\in \Lip([0,D])$ such that 
\begin{equation}\label{eq:claimClosela12}
\|u\|_{L^{2}([0,D],\mm)}=1, \quad \int_{[0,D]} u \, \mm =0, \quad \int_{[0,D]}  |u'|^{2} \mm \leq N+ C_{N} \epsilon.
\end{equation}
Setting $u^{*}_{N}(t):= \sqrt{N+1} \cos(t)$ and using \autoref{cor:estintegrals} we get 
\begin{equation}\label{eq:u*Nmm}
\left|\int_{[0,D]} u^{*}_{N} \, \mm \right| \leq C_{N} \epsilon, \quad  \left|1-\int_{[0,D]} |u^{*}_{N}|^{2} \, \mm \right| \leq C_{N} \epsilon,  \quad  \int_{[0,D]} |(u^{*}_{N})'|^{2} \, \mm  \leq  N +C_{N} \epsilon.
\end{equation}
Let $v=u^{*}_{N}-\int_{[0,D]} u^{*}_{N}\, \mm$ and   $c_{v}:= \|v\|_{L^{2}([0,D], \mm)}$. 
Using the estimates \eqref{eq:u*Nmm}, it is straightforward to check that $u= \frac{1}{c_{v}} v$ satisfies \eqref{eq:claimClosela12}.
\end{proof}


\subsection{Spectral gap and shape of eigenfunctions}
Next we establish some basic estimates on eigenfunctions which will be useful later. 

Given a one dimensional $\CD(K,N)$ space $(I, \sfd_{eucl}, \mm)$, we know that we can write $\mm(dt)=h \L^{1}(dt)$ for some $\CD(K,N)$ density $h$.
We start by recalling the definition and basic properties of the  Laplace operator $\Delta$.  
A function $u\in W^{1,2}(I,  \mm)$ is said to be in the domain of $\Delta$, and we write $u \in \dom(\Delta)$ if for every $\varphi\in C^{\infty}_{c}(I)$ it holds
$$
\left|\int_{I} u' \varphi' \, \mm \right| \leq C_{u}  \|\varphi\|_{L^{2}(I,\mm)},
$$
for some $C_{u}\geq 0$ depending on $u$. In this case, by Riesz Theorem, there exists a function $\Delta u\in L^{2}(I,\mm)$ such that
$$
- \int_{I} u' \varphi' \, \mm =  \int_{I}  \Delta u \, \varphi \, \mm.
$$
It is readily seen that the operator $\dom(\Delta)\ni u\mapsto \Delta u\in  L^{2}(I,\mm)$ is linear. 

Moreover, using the properties of $\CD(K,N)$ densities recalled at the beginning of the section, it holds that every $u\in \dom(\Delta)$ is twice differentiable $\L^{1}$-a.e. on $I$ and 
\begin{equation}\label{eq:formulaDelta}
\Delta u = u''+ (\log h)' u', \quad \L^{1}\text{-a.e. on $I$, }\forall  u\in \dom(\Delta).
\end{equation}

\begin{proposition}\label{cor:smallhessian1d} 
Let $(I,\dist_{eucl},\meas)$ be a one dimensional $\CD(N-1,N)$ m.m.s.. Then there exists a constant $C=C(N)>0$ such that, if $u$ is an eigenfunction of the Laplacian on $(I,\dist_{eucl},\meas)$ associated to an  eigenvalue  $\lambda\in [N, 2N]$ and with $\norm{u}_2=1$, then $u\in W^{2,2}_{\loc}\left(I,\dist_{eucl},\L^1\right)$ and
\begin{equation}\label{eq:estonedim}
\norm{u''+u}_{L^2(\meas)}\le C\left(\lambda-N\right)^{\frac{1}{2}}.
\end{equation}
\end{proposition}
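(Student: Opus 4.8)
The plan is to exploit the one-dimensional Bochner/integration-by-parts identity attached to the operator $\Delta u = u'' + (\log h)' u'$, which encodes the curvature information $(\log h)'' + \frac{1}{N-1}((\log h)')^2 \le -(N-1)$ from \eqref{eq:logh''-K}. Writing $v = \log h$, the eigenfunction equation reads $u'' + v' u' = -\lambda u$. First I would establish the regularity claim $u \in W^{2,2}_{\loc}(I,\sfd_{eucl},\L^1)$: since $u \in \dom(\Delta)$ means $u'' = \Delta u - v' u' = -\lambda u - v' u'$ pointwise $\L^1$-a.e., and $u$ is Lipschitz on compact subsets of the interior while $v'$ is locally bounded there (local semiconcavity of $h$ plus strict positivity), the right-hand side is locally $L^2$, giving the $W^{2,2}_{\loc}$ bound away from the endpoints; near the endpoints one argues as in the standard theory for $\CD(N-1,N)$ eigenfunctions (or one simply works on $(\delta, D-\delta)$ and lets $\delta \to 0$, controlling boundary terms by the integrability of $h$).

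The core computation is to differentiate the equation and integrate against a suitable weight. Following the classical Li–Yau / Bakry–Émery scheme in one variable, set $P := (u')^2 + u^2$ (the natural ``energy'' quantity, since on the sphere model $u = \sqrt{N+1}\cos$ one has $(u')^2 + u^2 \equiv N+1$ constant). One computes $\frac{1}{2}(P)' = u' u'' + u u' = u'(u'' + u) = u'(-\lambda u - v' u' + u) = -(\lambda - 1) u u' - v'(u')^2$, so $P$ is monotone-ish up to curvature terms; more usefully, I would compute the second-order identity for $\int_I (u'' + u)^2\,\mm$ directly. Multiply the identity $u'' + u = (1 - \lambda) u - v' u'$... but this only gives $\|u''+u\|_{L^2(\mm)} \le |1-\lambda|\,\|u\|_{L^2(\mm)} + \|v' u'\|_{L^2(\mm)}$, and the term $\|v'u'\|_{L^2(\mm)}$ need not be small, so the naive route fails. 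Instead the right move is the integrated Bochner inequality: integrate $\frac{1}{2}\Delta(|u'|^2)$ — formally $(u')(u')'' + (u'')^2 + (\text{curvature})(u')^2$ — against $\mm$, use that $\int_I \Delta(\text{anything})\,\mm = 0$ (boundary terms vanish since $h$ vanishes at the endpoints), commute $\Delta$ with $\frac{d}{dt}$ picking up a $v''$ term, and insert the curvature bound $v'' + \frac{1}{N-1}(v')^2 \le -(N-1)$. This yields, after using the eigenvalue equation to replace $\Delta u$ by $-\lambda u$,
$$
\int_I (u'')^2\,\mm \;\le\; \lambda^2 \int_I u^2\,\mm \;-\; (\text{good curvature terms}),
$$
and more precisely an inequality of the shape $\int_I \big(\Delta u + N u\big)^2 \,\mm \le (\lambda - N)\cdot(\text{bounded})$ — i.e. the deficit $\lambda - N$ controls how far $\Delta u$ is from $-Nu$, hence (combining with $\Delta u = -\lambda u$) controls $(\lambda-N)^2\int u^2$, which is the wrong power; so the sharp statement must come from the refined Bochner inequality that keeps the term $\frac{1}{N-1}(\Delta u - N u)^2$ or the traceless-Hessian term. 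The clean one-dimensional version: from $(\log h)'' \le -(N-1) - \frac{1}{N-1}(v')^2$ and the Bochner identity for $w := u'$, one gets
$$
\int_I (w' )^2 \,\mm \;+\; (N-1)\int_I w^2\,\mm \;\le\; \int_I (\Delta u)'\, w \,\mm \;=\; \lambda \int_I w^2 \,\mm,
$$
whence $\int_I (u'')^2 \,\mm \le (\lambda - N + 1)\int_I (u')^2\,\mm$. Pairing this with the identity $\int_I (u')^2 \mm = \lambda \int_I u^2 \mm = \lambda$ and with $\int_I (u''+u)^2 \mm = \int (u'')^2 + 2\int u u'' + \int u^2 = \int(u'')^2 - 2\int(u')^2 + 1 \le (\lambda - N +1)\lambda - 2\lambda + 1$, a short algebraic manipulation (using $\lambda \le 2N$ to absorb constants) gives $\int_I (u''+u)^2\,\mm \le C(N)(\lambda - N)$, which is exactly \eqref{eq:estonedim} after taking square roots.

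The main obstacle I anticipate is making the integration-by-parts / Bochner manipulations rigorous given only the low regularity: $h$ is merely locally semiconcave (so $v' = (\log h)'$ has bounded variation, $v''$ is a measure, twice-differentiable only off a countable set) and $u$ is only $W^{2,2}_{\loc}$, so the pointwise identity for $(u')'' $ and the interchange of $\Delta$ with differentiation must be justified by an approximation argument — either mollifying $h$ while preserving the $\CD(N-1,N)$ inequality (as referenced via \cite{CavallettiMondino17b}, \cite{BakryQian00}), or integrating on $(\delta, D-\delta)$, controlling the boundary contributions $[h\, u' u'']_{\delta}^{D-\delta}$ and $[h\,(u')^2 v']$ using $h \to 0$ at the endpoints and the local bounds, then passing to the limit. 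A secondary technical point is that $v''$ enters as a signed measure but the curvature bound $v'' \le -(N-1) - \frac{1}{N-1}(v')^2$ holds as measures (the singular part of $v''$ being non-positive by concavity of $h^{1/(N-1)}$), so the inequality is preserved under integration against the non-negative test weight; this is standard for $\CD(K,N)$ densities but should be invoked carefully. Once the regularization is in place, the remaining steps are the routine algebra sketched above together with the uniform bound $\lambda \le 2N$ to keep all constants depending only on $N$.
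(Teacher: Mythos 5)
Your overall strategy (a one--dimensional Bochner/$\Gamma_2$ computation with the weight $h$) is the right one and is essentially what the paper does, but the execution contains a genuine error that invalidates the final algebra. The step $\int_I u\,u''\,\mm = -\int_I (u')^2\,\mm$ is false: since $\mm = h\,\L^1$ is a weighted measure, integration by parts produces the extra term $\int_I u\,u'\,(\log h)'\,\mm$, which is neither zero nor small. On the model $([0,\pi],|\cdot|,\mm_N)$ with $u=\sqrt{N+1}\cos$ one computes $\int u\,u'(\log h_N)'\,\mm_N = -(N-1)$, so your identity $\int_I(u''+u)^2\mm = \int(u'')^2 - 2\int(u')^2+1$ would evaluate to $2-2N<0$ for a quantity that is actually $0$. (The correct relation is $\int_I u\,u''\,\mm = \int_I u\,\Delta u\,\mm - \int_I u\,u'(\log h)'\,\mm = -\lambda - \int_I u\,u'(\log h)'\,\mm$.)

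Even after repairing the integration by parts, the inequality you ultimately use, $\int_I(u'')^2\mm \le (\lambda-N+1)\int_I(u')^2\mm$, is too lossy to close the argument: on the model at $\lambda=N$ it gives $\int(u'')^2\le N$, whereas the true value is $1$, so the slack does not vanish as $\lambda\to N$ and cannot be converted into a bound of order $\lambda-N$. You correctly remark that the ``refined Bochner inequality that keeps the term $\frac{1}{N-1}(\Delta u-u'')^2$'' is needed, but your ``clean one-dimensional version'' then drops exactly that term. The paper's proof retains it: from $\Gamma_2(u;1)=\int(\Delta u)^2\mm \ge \int[(u'')^2+(N-1)(u')^2+\frac{1}{N-1}(\Delta u-u'')^2]\mm$ and the algebraic identity \eqref{eq:IdentityN-1u'2} one isolates the quantity $\int_I(u''-\frac{1}{N}\Delta u)^2\mm \le \frac{N-1}{N}\lambda(\lambda-N)$, and since $\frac{1}{N}\Delta u=-\frac{\lambda}{N}u$ differs from $-u$ by $\frac{\lambda-N}{N}u$, the estimate \eqref{eq:estonedim} follows without ever needing to integrate $u\,u''$ against $\mm$. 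You should rework Steps 3--4 of your sketch along these lines; the regularity discussion in your first paragraph is fine.
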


\begin{proof}
\textbf{Step 1}.  
We claim that it holds 
\begin{equation}\label{eq:u''Deltau}
\int_{I} \left(u''- \frac{1}{N} \Delta u \right)^{2} \mm \leq  \int_{I} \left(  \frac{N-1}{N}  (\Delta u)^{2}- (N-1) (u')^{2} \right) \mm .
\end{equation}
Since by assumption $u\in W^{1,2}(I, \sfd_{eucl}, \mm)$  is an eigenfunction we have  $-\Delta u\in W^{1,2}(I, \sfd_{eucl}, \mm)$ as well. Thus we can  define the $\Gamma_{2}$ operator as
\begin{equation}\label{eq:defGamma2EF}
\Gamma_{2}(u; \varphi):= 
\int_{I}  \left(\frac{1}{2} (u')^{2} \Delta \varphi 
- (\Delta u)' \, u'    \varphi \right) \, \mm, 
\end{equation}
for all $\varphi\in L^{\infty}(I, \mm)$ with $\Delta \varphi\in L^{\infty}(I, \mm)$.
Using that  $h$ satisfies \eqref{eq:logh''-K}, a manipulation via integration by parts gives that for all $\varphi\geq 0$ as above it holds:
\begin{equation}\label{eq:proofGamma2u''}
\Gamma_{2}(u; \varphi)\geq \int_{I} \left[ (u'')^{2} + (N-1) (u')^{2} + \frac{1}{N-1} (\Delta u-u'')^{2} \right] \varphi \, \mm.
\end{equation}
By direct computations,  one can check  that 
\begin{align}\label{eq:IdentityN-1u'2}
(u'')^{2} +& (N-1) (u')^{2} + \frac{1}{N-1} (\Delta u-u'')^{2} \nonumber  
\\
= &~ (N-1) (u')^{2} + \left(  u''-\frac{1}{N} \Delta u\right)^{2} +\frac{1}{N} (\Delta u)^{2}
 +\frac{1}{N-1} \left( u''-\frac{1}{N} \Delta u \right)^{2},\quad \mm\text{-a.e.}.
\end{align}
Plugging \eqref{eq:IdentityN-1u'2} into \eqref{eq:proofGamma2u''} gives
$$
\Gamma_{2}(u;\varphi)\geq  \int_{I} \left[  (N-1) (u')^{2} + \left(  u''-\frac{1}{N} \Delta u\right)^{2}  +\frac{1}{N} (\Delta u)^{2} \right] \, \varphi \,  \mm.
$$
Choosing $\varphi \equiv1$ yields \eqref{eq:u''Deltau}.
\\

\textbf{Step 2}. \\
Inserting the eigenvalue relation  $\lambda u= - \Delta u$ into \eqref{eq:u''Deltau}, we obtain
\begin{equation}\label{eq:u''DeltauPfula}
\int_{I} \left(u''+ \frac{\lambda}{N} u \right)^{2} \mm \leq  \int_{I}  \left( \frac{N-1}{N}  (\lambda u)^{2}- (N-1) (u')^{2}  \right) \mm=   \frac{N-1}{N} \lambda(\lambda-N)   \int_{I}     u^{2} \mm.
\end{equation}
Eventually, 
\begin{align}
\int_{I} \left(u''+  u \right)^{2} \mm & \leq 2\int_{I} \left| u''+\frac{\lambda}{N} u \right|^{2} \mm +2  \int_{I} \left|\frac{\lambda-N}{N} u \right|^{2} \mm \nonumber \\
& \leq 2\left(  \frac{N-1}{N} \lambda(\lambda-N)  + \frac{(\lambda-N)^{2}}{N^{2}} \right) \int_{I} u^{2} \mm \nonumber \\
&\leq C(N)  (\lambda-N) \int_{I} u^{2} \mm, \nonumber
\end{align}
where, in the last estimate, we used the assumption $\lambda \leq 2N$.
\end{proof}

The aim of the remaining part of this section is to prove  \autoref{thm:mainthm1d} stating roughly  that, on any one dimensional $\CD(N-1,N)$ m.m.s. $(I,\sfd_{eucl}, \mm)$, a function $u:I\to \R$ whose $2$-Rayleigh quotient is close to $N$ (the optimal one on the model $(N-1,N)$-space) and with $L^2$-norm equal to one, is $W^{1,2}$-close  to the (normalized) cosine of the distance from one of the extrema of the interval, in quantitative terms.

The conclusion of \autoref{thm:mainthm1d} will be achieved through some intermediate steps. First we  estimate the  $W^{1,2}$-closeness of a first eigenfunction  $u^{*}$ for $(I,\sfd_{eucl}, \mm)$ 
with the cosine of the distance from one of the extremes of the segment, see \autoref{prop:estdisteigcos}. Then, we bound the $W^{1,2}$-closeness of the function $u$ from  $u^{*}$ (or $- u^{*}$), see  \autoref{prop:deficitestimatesdist1}.

Let us observe that
\begin{equation}\label{eq:defcN}
\norm{\cos(\cdot)}_{L^2(\meas_N)}=\frac{1}{\sqrt{N+1}},
\end{equation}
and, by symmetry,
\begin{equation}\label{eq:intcosmN0}
\int_{[0,\pi]}\cos(t)\,\mm_N(dt)=0.
\end{equation}

\begin{proposition}\label{prop:estdisteigcos}
For every $N>1$ there exist constants $C=C(N)>0$ and 
$\epsilon_0= \epsilon_0(N)>0$ such that for every one 
dimensional $\CD(N-1,N)$ m.m.s. $([0,D],\dist_{eucl},\meas)$ 
and every Neumann eigenfunction $u^{*}$, with $\|u^{*}\|_{L^{2}(\mm)}=1$, of eigenvalue $\lambda \in [N,2N]$
it holds
\begin{equation}
\label{eq:estonedimeigcos}
\min\left\lbrace\norm{u^{*}-\sqrt{N+1}\cos(\cdot)}_{L^2(\meas)},\norm{u^{*}+\sqrt{N+1}\cos(\cdot)}_{L^2(\meas)} \right\rbrace\le C\delta^{\min\left\lbrace 1/2,1/N\right\rbrace}, 
\end{equation}
where $\delta:= \int |\nabla u^{*}|^{2} \mm-N<\epsilon_0$.
Furthermore the conclusion can be improved to $W^{1,2}$-closeness:
\begin{equation}
\min\left\lbrace\norm{\left(u^{*}-\sqrt{N+1}\cos(\cdot)\right)'}_{L^2(\meas)},\norm{\left(u^{*}+\sqrt{N+1}\cos(\cdot)\right)'}_{L^2(\meas)} \right\rbrace\le C\delta^{\min\left\lbrace 1/2,1/N\right\rbrace}. 
\end{equation}

\end{proposition}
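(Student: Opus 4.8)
The plan is to reduce everything to the one-dimensional differential inequality already at hand. Let $h$ be the $\CD(N-1,N)$ density of $\mm$ on $[0,D]$ and let $u^{*}$ be a Neumann eigenfunction of eigenvalue $\lambda\in[N,2N]$ with $\|u^{*}\|_{L^{2}(\mm)}=1$; since $\delta=\int|\nabla u^{*}|^{2}\mm-N=\lambda-N$ is small, $\lambda$ is indeed in $[N,2N]$ once $\epsilon_{0}$ is small enough. First I would invoke \autoref{cor:smallhessian1d} to get $u^{*}\in W^{2,2}_{\loc}$ and the key estimate $\|(u^{*})''+u^{*}\|_{L^{2}(\mm)}\le C\delta^{1/2}$. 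The function $w(t):=\sqrt{N+1}\cos(t)$ solves $w''+w=0$ exactly, so $g:=u^{*}-w$ satisfies $g''+g=(u^{*})''+u^{*}$ with small $L^{2}(\mm)$-norm; the problem becomes showing that, after possibly replacing $w$ by $-w$, the pair $(g,g')$ is small in $L^{2}(\mm)$, i.e. a quantitative stability statement for the ODE $g''+g=f$ with $f$ small.

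The main point is the diameter input. Next I would apply \autoref{thm:improvedspectralNeumann1D}: since $\lambda^{1,2}_{(I,\dist_{eucl},\meas)}\le\lambda=N+\delta$, the quantitative Obata inequality \eqref{eq:1DQuantObata} forces $\pi-D\le C(N)\,\delta^{1/N}$, so $D\ge\pi-\epsilon$ with $\epsilon:=C(N)\delta^{1/N}$. This is where the exponent $\min\{1/2,1/N\}$ enters: the $L^{2}$-control coming from $\Gamma_{2}$ is of order $\delta^{1/2}$, while the geometric control of how much mass can escape near the endpoints (via \autoref{cor:estintegrals}/\autoref{cor:estdens}, comparing $\mm$ with $\mm_{N}$) is only of order $\epsilon=\delta^{1/N}$, and the worst of the two governs the final bound. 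With $D$ so close to $\pi$ I can compare all relevant integrals against the model measure $\mm_{N}$ on $[0,\pi]$ with error $O(\epsilon)$.

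Then comes the ODE-stability argument. Writing the second-order equation $g''+g=f$ as a first-order system, the solution on $[0,D]$ with the Neumann endpoint conditions (here $g'(0),g'(D)$ need not vanish, but $(u^{*})'$ does, and $w'(0)=0$, $w'(D)=-\sqrt{N+1}\sin D$ is $O(\epsilon)$ small) is controlled by variation of constants: $g(t)=a\cos t+b\sin t+\int_{0}^{t}\sin(t-s)f(s)\,ds$. The coefficients $a,b$ are fixed by the boundary data together with the normalization $\int u^{*}\,\mm=0$ and $\|u^{*}\|_{L^{2}(\mm)}=1$; using \eqref{eq:defcN}, \eqref{eq:intcosmN0} and \autoref{cor:estintegrals} one sees $\int w\,\mm=O(\epsilon)$ and $\|w\|_{L^{2}(\mm)}^{2}=1+O(\epsilon)$, so the orthogonality/normalization of $u^{*}$ pins down $(a,b)$ up to the sign ambiguity $w\leftrightarrow -w$ and up to an error $O(\delta^{1/2}+\epsilon)=O(\delta^{\min\{1/2,1/N\}})$. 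Feeding this back gives $\|g\|_{L^{2}(\mm)}\le C\delta^{\min\{1/2,1/N\}}$, which is \eqref{eq:estonedimeigcos}. For the $W^{1,2}$-improvement, differentiate: $g'(t)=-a\sin t+b\cos t+\int_{0}^{t}\cos(t-s)f(s)\,ds$, and the same bounds on $a,b$ and on $\|f\|_{L^{2}(\mm)}$, together with the endpoint control of $g'$, yield $\|g'\|_{L^{2}(\mm)}\le C\delta^{\min\{1/2,1/N\}}$. Alternatively one can get the gradient bound more softly: $\int|g'|^{2}\mm=\int\big((u^{*})'\big)^{2}\mm-2\int (u^{*})'w'\,\mm+\int (w')^{2}\mm$, and each term is computed from $\int|\nabla u^{*}|^{2}\mm=N+\delta$, the already-established $L^{2}$-closeness, integration by parts using the eigenvalue equation, and \autoref{cor:estintegrals}.

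I expect the main obstacle to be the bookkeeping in pinning down the constants $a,b$ (equivalently: showing the two endpoint "phases" of $g$ are compatible) while tracking that every comparison between $\mm$ and $\mm_{N}$ only costs $O(\epsilon)=O(\delta^{1/N})$, so that nothing worse than $\delta^{\min\{1/2,1/N\}}$ ever appears; the sign ambiguity in $\pm w$ must be resolved exactly once, at the stage where the boundary data of $g$ are read off, and then propagated consistently.
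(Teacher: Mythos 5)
Your overall strategy is the same as the paper's: the $\Gamma_2$ estimate $\|(u^{*})''+u^{*}\|_{L^{2}(\mm)}\le C\delta^{1/2}$ from \autoref{cor:smallhessian1d}, the diameter bound $\pi-D\le C\delta^{1/N}$ from \autoref{thm:improvedspectralNeumann1D}, a variation-of-constants representation, and the use of $\int u^{*}\,\mm=0$ and $\|u^{*}\|_{L^{2}(\mm)}=1$ to pin down the two free coefficients up to the sign ambiguity. However, there is a genuine gap in the way you set up the Duhamel formula. You base the integral at the endpoint, writing $g(t)=a\cos t+b\sin t+\int_{0}^{t}\sin(t-s)f(s)\,ds$ with $f=(u^{*})''+u^{*}$. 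But $f$ is only controlled in $L^{2}(h\,dt)$, and the $\CD(N-1,N)$ density $h$ degenerates like $t^{N-1}$ near $t=0$ (and near $t=D$, since $D$ is close to $\pi$). For $N>2$ a function in $L^{2}(h\,dt)$ need not be locally integrable at the endpoint, so your integral term may not even be defined; and even when it is, the weighted bound you need, namely $\bigl\|\int_{\cdot}\sin(\cdot-s)f(s)\,ds\bigr\|_{L^{2}(\mm)}\le C\|f\|_{L^{2}(\mm)}$, is not automatic. The paper resolves exactly this by basing the integral at the interior maximum $x_{0}$ of $h$ and exploiting that $h$ is increasing on $[0,x_{0}]$ and decreasing on $[x_{0},D]$ (a consequence of \cite[Lemma A.4]{CMM}); the Fubini argument in its Step 2 uses $\int_{0}^{s}h(t)\,dt\le\pi h(s)$ for $s\le x_{0}$ and the symmetric inequality on the other side to get $\|v_{0}\|_{L^{2}(\mm)}\le\pi\|f\|_{L^{2}(\mm)}$. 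Without this (or an equivalent weighted estimate) your error term is not under control.

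A second, related issue: you propose to read off $(a,b)$ partly from the endpoint values of $(u^{*})'$, asserting Neumann data $(u^{*})'(0)=(u^{*})'(D)=0$. In this degenerate-weight setting the eigenfunction is defined weakly, and the natural boundary behaviour is $h\,(u^{*})'\to 0$, not $(u^{*})'\to 0$; pointwise control of $(u^{*})'$ at the endpoints is neither available without further argument nor needed. The paper avoids boundary data entirely and determines $\alpha,\beta$ solely from $\int u^{*}\,\mm=0$ and the $L^{2}$ normalization (with a small bootstrap to upgrade the exponent on $|\sqrt{N+1}\mp\beta|$ from $\min\{1/4,1/(2N)\}$ to $\min\{1/2,1/N\}$, a step your sketch also glosses over). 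Both defects are repairable by following the paper's choice of base point and dropping the appeal to endpoint derivatives, but as written the argument does not close.
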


\begin{proof}
Let $h:[0,D]\to[0,+\infty)$ be the density of $\meas$ with respect to $\L^1$ and let $x_0\in(0,D)$ be a maximum point of $h$. In \cite[Lemma A.4]{CMM} it is proved that such a maximum point is unique and that $h$ is strictly increasing on $[0,x_0]$ and strictly decreasing on $[x_0,D]$. 

\smallskip
\textbf{Step 1.} \\ 
In this first step we prove that, given $z\in L^2([0,D],\meas)$, any solution of $v''+v=z$ can be written as
\begin{equation}\label{eq:generalexpression}
v(t)=\int_{x_0}^t\sin(t-s)z(s)\di s+\alpha\sin(t)+\beta\cos(t)
\end{equation}
for some $\alpha,\beta\in \setR$. To this aim, it suffices to prove that 
\begin{equation}\label{eq:defv0}
v_0(t):=\int_{x_0}^{t}\sin(t-s)z(s)\di s
\end{equation}
solves $v''+v=z$. First we observe that $v_0$ is well defined, since the assumption $z\in L^2((0,D),\meas)$ guarantees that $z\in L^1_{\loc}((0,D),\L^1)$ (due to the fact that $h$ is locally bounded from below by a strictly positive constant in the interior of $[0,D]$). The fact that it satisfies $v_0''+v_0=z$ follows from an elementary computation.

\smallskip
\textbf{Step 2.} \\ 
Next, we prove that the function $v_0$ defined  in \eqref{eq:defv0} satisfies
\begin{equation}\label{eq:estnormv0}
\norm{v_0}_{L^2(\meas)}\le \pi\norm{z}_{L^2(\meas)}.
\end{equation}
Indeed, taking into account that $\abs{\sin}\le 1$, applying the Cauchy-Schwarz inequality, Fubini's Theorem and recalling that $h$ is increasing on $[0,x_0]$ and decreasing on $[x_0,D]$, we can compute 
\begin{align*}
\norm{v_0}^2_{L^2(\meas)}=&\int_0^D\left(\int_{x_0}^t\sin(t-s)z(s)\di s\right)^2h(t)\di t\\
\le&\pi\int_0^Dh(t) \left|\int_{x_0}^tz^2(s)\di s \right|\di t\\
=&\pi\left(\int_0^{x_0}z^2(s)\int_0^sh(t)\di t\di s+\int_{x_0}^D z^2(s)\int_s^Dh(t)\di t\di s\right)\\
\le&\pi^2\left(\int_0^{x_0}z^2(s)h(s)\di s+\int_{x_0}^{D}z^2(s)h(s)\di s \right)=\pi^2\norm{z}^2_{L^2(\meas)}.   
\end{align*}
Let us remark that from \eqref{eq:estnormv0} it follows applying Cauchy-Schwartz inequality that $\norm{v_0}_{L^1(\meas)}\le\pi\norm{z}_{L^2(\meas)}$.

\smallskip
\textbf{Step 3.} \\
Recall from \autoref{thm:improvedspectralNeumann1D} the bound $\pi-D\le C\delta^{1/N}$. Furthermore we know from \eqref{eq:estonedim} that, if $u^{*}$ is as in the assumptions of the statement, then $(u^{*})''+u^{*}=z$ on $[0,D]$ for some function $z$ such that $\norm{z}_{L^2(\meas)}\le C\delta^{1/2}$.
Hence, as proved in \textbf{Step 1.}, $u^{*}$ can be written as
\begin{equation}\label{eq:intexpr}
u^{*}(t)=\int_{x_0}^t\sin(t-s)z(s)\di s+\alpha\sin(t)+\beta\cos(t),
\end{equation}
for some $\alpha,\beta\in\setR$. We want to show that there exists $C=C(N)>0$ such that $|\alpha|+|\beta|\leq C(N)$.\\
 Set $u_0(t):=\int_{x_0}^t\sin(t-s)z(s)\di s$ and recall that, from \textbf{Step 2.},  it holds $\norm{u_0}_{L^2(\meas)}\le C\delta^{1/2}$.
Since by assumption $u^{*}$ has null mean value, integrating  \eqref{eq:intexpr} over $[0,D]$ with respect to $\mm$ gives 
\begin{equation}\label{eq:u*int0}
0=\alpha\int_{[0,D]}\sin(t)\mm(dt)+\beta\int_{[0,D]}\cos(t)\mm(dt)+\int_{[0,D]} u_0(t)\mm(dt).
\end{equation}
From the last remark in {\bf Step 2.} and \autoref{cor:estintegrals}, it follows that
\begin{equation*}
\left(\int_{[0,\pi]} \sin^{N}(t) \di t + O(\delta^{1/N})\right)\alpha  +O(\delta^{1/N}) \beta+ O(\delta^{1/2}) =0,
\end{equation*}
giving that
\begin{equation}\label{eq:estforalphaLinfy}
\alpha= O(\delta^{1/N}) \beta+ O(\delta^{1/2}).
\end{equation}
In order to estimate $\beta$, we compute the $L^2(\meas)$-norm squared both at the left and at the right hand-side of \eqref{eq:intexpr} to obtain
\begin{align}
1=&\norm{u_0}^2_{L^2(\meas)}+\alpha^2\norm{\sin(\cdot)}_{L^2(\meas)}^2+\beta^2\norm{\cos(\cdot)}^2_{L^2(\meas)} \nonumber \\
&+2\alpha\int u_0(t)\sin(t)\mm(dt)+2\beta\int u_0(t)\cos(t)\mm(dt)+2\alpha\beta\int\sin(t)\cos(t)\mm(dt). \label{eq:L2normu*Linfty}
\end{align}
Plugging \eqref{eq:estforalphaLinfy} into \eqref{eq:L2normu*Linfty}, gives
\begin{equation} \label{eq:estforbetaLinfy}
 \left(1+O(\delta)\right) +  O(\delta^{1/N+1/2}) \ \beta+ \left( \int_{[0,\pi]} \cos^{2}(t) \sin^{N-1}(t) \di t + O(\delta^{1/N})\right) \beta^{2}=0,
\end{equation}
yielding $|\beta|\leq C(N)$ and thus, by  \eqref{eq:estforalphaLinfy}, also $|\alpha|\leq C(N)$.

\smallskip
\textbf{Step 4.} \\
Conclusion. In order to get \eqref{eq:estonedimeigcos}, we have to bound $\abs{\alpha}$ and $\min\left\lbrace\abs{\sqrt{N+1}-\beta},\abs{\sqrt{N+1}+\beta}\right\rbrace$ in terms of $\delta$.\\
From \eqref{eq:u*int0}, {\bf Step 3.}, the last remark in {\bf Step 2.} and \autoref{cor:estintegrals} it follows that
\begin{equation}\label{eq:estforalpha}
\abs{\alpha}\le C(\delta^{1/2}+\delta^{1/N})\le C\delta^{\min\left\lbrace 1/N,1/2\right\rbrace },
\end{equation}
up to increase the value of the constant $C$ in the second inequality. Plugging   \eqref{eq:estforalpha} into  \eqref{eq:L2normu*Linfty} gives
\begin{equation*}
1=O(\delta)+O(\delta^{\min\left\lbrace 1,2/N\right\rbrace })+O(\delta^{1/2})+O(\delta^{\min\left\lbrace1/2,1/N \right\rbrace })+\beta^2/(N+1)
\end{equation*}
and therefore
\begin{equation}\label{eq:estbeta}
\abs{1-\frac{\beta^2}{N+1}}=O(\delta^{\min\left\lbrace 1/2,1/N\right\rbrace }).
\end{equation}
From \eqref{eq:estbeta} we easily obtain that 
\begin{equation}\label{eq:estbetadef}
\min\left\lbrace\abs{\sqrt{N+1}-\beta},\abs{\sqrt{N+1}+\beta} \right\rbrace\le C\delta^{\min\left\lbrace 1/4,1/(2N)\right\rbrace }. 
\end{equation}
In case  
$$\abs{\sqrt{N+1}-\beta}=\min\left\lbrace\abs{\sqrt{N+1}-\beta},\abs{\sqrt{N+1}+\beta} \right\rbrace \leq  C\delta^{\min\left\lbrace 1/4,1/(2N)\right\rbrace }$$ 
(respectively $\abs{\sqrt{N+1}+\beta}=\min\left\lbrace\abs{\sqrt{N+1}-\beta},\abs{\sqrt{N+1}+\beta} \right\rbrace \leq C\delta^{\min\left\lbrace 1/4,1/(2N)\right\rbrace }$), it follows that  
\begin{equation}\label{eq:N+1+betageqN+1}
\abs{\sqrt{N+1}+\beta}\geq 2 \sqrt{N+1}- C\delta^{\min\left\lbrace 1/4,1/(2N)\right\rbrace } \geq \sqrt{N+1}, \quad \text{for } \delta\leq \delta_{0}(N).
\end{equation}
(resp. $\abs{\sqrt{N+1}-\beta}\geq \sqrt{N+1}$). Plugging  \eqref{eq:N+1+betageqN+1} back into \eqref{eq:estbeta} gives  $\abs{\sqrt{N+1}-\beta} \leq  C\delta^{\min\left\lbrace 1/2,1/N\right\rbrace }$ (resp. $\abs{\sqrt{N+1}+\beta} \leq  C\delta^{\min\left\lbrace 1/2,1/N\right\rbrace }$). In conclusion,  \eqref{eq:estbeta} and \eqref{eq:estbetadef} can be bootstrapped to give
\begin{equation}\label{eq:estbetadefImproved}
\min\left\lbrace\abs{\sqrt{N+1}-\beta},\abs{\sqrt{N+1}+\beta} \right\rbrace\le C\delta^{\min\left\lbrace 1/2,1/N\right\rbrace }. 
\end{equation}
Combining all these ingredients we can eventually estimate the $L^2(\meas)$-distance between the first Neumann eigenfunction and the normalized cosine. Indeed, assuming without loss of generality that $\abs{\sqrt{N+1}-\beta}\le\abs{\sqrt{N+1}+\beta}$ and taking into account \eqref{eq:estforalpha}, \eqref{eq:estbetadefImproved}, we obtain
\begin{align*}
\norm{u^{*}-\sqrt{N+1}\cos(\cdot)}_{L^2(\meas)}=&\norm{u_0+\alpha\sin(\cdot)+\beta\cos(\cdot)-\sqrt{N+1}\cos(\cdot)}_{L^2(\meas)}\\
\le&\abs{\alpha}\norm{\sin(\cdot)}_{L^2(\meas)}+\norm{u_0}_{L^2(\meas)}+\abs{\beta-\sqrt{N+1}}\norm{\cos(\cdot)}_{L^2(\meas)}\\
\le&  C\delta^{\min\left\lbrace 1/2,1/N\right\rbrace}.
\end{align*}
Finally, we improve the $L^2(\meas)$-closeness to $W^{1,2}(\meas)$-closeness. To this aim, differentiate \eqref{eq:intexpr} to obtain
\begin{equation}\label{eq:derivative}
(u^{*})'(t)=\int_{x_0}^t\cos(t-s)z(s)\di s+\alpha\cos(t)-\beta\sin(t).
\end{equation} 
With computations analogous to the ones used to obtain the bound $\norm{v_0}_2\le\pi\norm{z}_2$ in {\bf Step 2.}, one can prove that, letting $w_0(t):=\int_{x_0}^t\cos(t-s)\di s$, it holds $\norm{w_0}_2\le\pi\norm{z}_2$. The sought estimate for 
\begin{equation*}
\min\left\lbrace\norm{\left(u^{*}-\sqrt{N+1}\cos(\cdot)\right)'}_{L^2(\meas)},\norm{\left(u^{*}+\sqrt{N+1}\cos(\cdot)\right)'}_{L^2(\meas)} \right\rbrace
\end{equation*}
follows taking into account \eqref{eq:estforalpha} and \eqref{eq:estbetadef}.
\end{proof}

We isolate the following corollary which will be useful later in the paper.
\begin{corollary}\label{rm:impr0r1}
Under the assumptions of \autoref{prop:estdisteigcos}, setting $r=\delta^{\gamma/N}$ for some $\gamma\in (0,1)$, it holds
\begin{align}
&\min\left\lbrace \norm{u^*-\sqrt{N+1}\cos(\cdot)}_{W^{1,2}([0,r],\mm)},\norm{u^*+\sqrt{N+1}\cos(\cdot)}_{W^{1,2}([0,r],\mm)}\right\rbrace \nonumber \\
& \qquad \qquad \le C(N)\left(\delta^{1/2}+r^{N/2}\delta^{\min\{1/2,1/N\}} \right). \label{eq:impr0r1}
\end{align}
Moreover, for  $\eta\in (0, r/10)$, 
\begin{align}
&\min\left\lbrace \norm{u^*-\sqrt{N+1}\cos(\cdot)}_{W^{1,2}([r-\eta, r+\eta],\mm)},\norm{u^*+\sqrt{N+1}\cos(\cdot)}_{W^{1,2}([r-\eta, r+\eta],\mm)}\right\rbrace \nonumber \\
& \qquad \qquad \le C(N)\left(\delta^{1/2}+(r^{N-1} \eta)^{1/2}\delta^{\min\{1/2,1/N\}} \right). \label{eq:impr0r11}
\end{align}
\end{corollary}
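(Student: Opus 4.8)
\emph{Proof proposal.} The plan is to re-use, almost verbatim, the representation of $u^{*}$ obtained in the proof of \autoref{prop:estdisteigcos} and simply localise its three pieces to the small intervals $[0,r]$ and $[r-\eta,r+\eta]$. Recall that there (in \textbf{Step 1.}) one writes
\begin{equation*}
u^{*}(t)=u_{0}(t)+\alpha\sin(t)+\beta\cos(t),\qquad u_{0}(t):=\int_{x_{0}}^{t}\sin(t-s)\,z(s)\di s,
\end{equation*}
with $(u^{*})''+u^{*}=z$, $\norm{z}_{L^{2}(\meas)}\le C\delta^{1/2}$, and $x_{0}$ the maximum point of the density $h$ of $\meas$; moreover $u_{0}'(t)=w_{0}(t):=\int_{x_{0}}^{t}\cos(t-s)\,z(s)\di s$, and the computation of \textbf{Step 2.} gives the \emph{global} bounds $\norm{u_{0}}_{L^{2}(\meas)},\norm{w_{0}}_{L^{2}(\meas)}\le\pi\norm{z}_{L^{2}(\meas)}\le C\delta^{1/2}$, which of course restrict to any subinterval. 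Assuming as usual $\abs{\sqrt{N+1}-\beta}\le\abs{\sqrt{N+1}+\beta}$, \textbf{Step 3.} together with \eqref{eq:estforalpha} and \eqref{eq:estbetadefImproved} yield $\abs{\alpha}\le C\delta^{\min\{1/2,1/N\}}$ and $\abs{\beta-\sqrt{N+1}}\le C\delta^{\min\{1/2,1/N\}}$. Writing $u^{*}-\sqrt{N+1}\cos(\cdot)=u_{0}+\alpha\sin(\cdot)+(\beta-\sqrt{N+1})\cos(\cdot)$ and differentiating this, the whole problem reduces to bounding $\meas(J)$ for $J=[0,r]$ and $J=[r-\eta,r+\eta]$, since on such $J$ one has $\norm{\sin(\cdot)}_{L^{2}(J,\meas)},\norm{\cos(\cdot)}_{L^{2}(J,\meas)}\le\meas(J)^{1/2}$.

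For $J=[0,r]$ with $r=\delta^{\gamma/N}$, I would first note that $\epsilon:=\pi-\diam(I)\le C\delta^{1/N}$ by \autoref{thm:improvedspectralNeumann1D}, so that $\epsilon/r\le C\delta^{(1-\gamma)/N}\to0$; in particular, for $\delta$ small the hypotheses of the sharpened estimates \eqref{eq:distancedensitiesImproved}--\eqref{eq:estintegralsimproved} are met. Applying \eqref{eq:estintegralsimproved} with $f\equiv1$ and comparing with $\meas_{N}([0,r])\le\frac{1}{\omega_{N}}\int_{0}^{r}t^{N-1}\di t$ gives $\meas([0,r])\le C r^{N}+C\epsilon r^{N-1}\le Cr^{N}$, whence $\norm{\sin(\cdot)}_{L^{2}([0,r],\meas)},\norm{\cos(\cdot)}_{L^{2}([0,r],\meas)}\le Cr^{N/2}$; combined with the global bounds on $u_{0},w_{0}$ this is exactly \eqref{eq:impr0r1} (for the $W^{1,2}$ version one argues identically on $(u^{*})'=w_{0}+\alpha\cos(\cdot)-\beta\sin(\cdot)$).

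For $J=[r-\eta,r+\eta]$ with $\eta\in(0,r/10)$, the one genuinely delicate point is the pointwise density bound: using the \emph{length-dependent} refinement \eqref{eq:distancedensitiesImproved} (applied with $r$ replaced by $2r$ if needed) one gets $h(t)\le h_{N}(t)+Cr^{N-2}\epsilon\le Cr^{N-1}$ on $[0,2r]\supset[r-\eta,r+\eta]$, using once more $\epsilon\le Cr^{1/\gamma}\le Cr$; hence $\meas([r-\eta,r+\eta])\le Cr^{N-1}\eta$, and \eqref{eq:impr0r11} follows as before. This is precisely the place where the argument would fail if one only had the crude estimate $\abs{h-h_{N}}\le C\epsilon$ of \eqref{eq:distancedensities}: since $\gamma\in(0,1)$ is arbitrary, $\epsilon\sim\delta^{1/N}$ need not be bounded by a constant multiple of $r^{N-1}$ (nor of $r^{N}$), so one really must exploit that the density error shrinks like a positive power of $r$ near the endpoint of the interval. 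Apart from this observation, every step is a routine restriction of bounds already established in \autoref{prop:estdisteigcos}, so the write-up should be short; the only care needed is to check that the fixed-ratio enlargements of $[0,r]$ still lie in $[0,1/10)$ and that $\epsilon<r/10$, both guaranteed for $\delta$ small depending only on $N$ and $\gamma$.
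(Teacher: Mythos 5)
Your proposal is correct and follows essentially the same route as the paper: restrict the decomposition $u^{*}=u_{0}+\alpha\sin(\cdot)+\beta\cos(\cdot)$ (and its derivative) from \autoref{prop:estdisteigcos} to the subinterval, keep the global $L^{2}$ bounds on $u_{0},w_{0}$, and control $\meas([0,r])$ and $\meas([r-\eta,r+\eta])$ via the sharpened density/integral estimates \eqref{eq:distancedensitiesImproved}--\eqref{eq:estintegralsimproved}. Your remark that the crude bound \eqref{eq:distancedensities} would not suffice (since $\epsilon\sim\delta^{1/N}$ need not be $O(r^{N-1})$ for arbitrary $\gamma\in(0,1)$) correctly identifies the one non-routine point, which is exactly where the paper invokes \eqref{eq:estintegralsimproved}.
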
 

\begin{proof}
It is enough to improve the final estimates in \textbf{Step 4.} of the proof of \autoref{prop:estdisteigcos} by using \eqref{eq:estintegralsimproved}:
\begin{align*}
\norm{u^{*}-\sqrt{N+1}\cos(\cdot)}_{L^2([0,r],\meas)}=&\norm{u_0+\alpha\sin(\cdot)+\beta\cos(\cdot)-\sqrt{N+1}\cos(\cdot)}_{L^2([0,r]\meas)}\\
\le&\norm{u_0}_{L^2([0,r],\meas)}+\abs{\alpha}\norm{\sin(\cdot)}_{L^2([0,r],\meas)}+\abs{\beta-\sqrt{N+1}}\norm{\cos(\cdot)}_{L^2([0,r]\meas)} \\
\le&  C \Big(\delta^{1/2} + \delta^{\min\left\lbrace 1/2,1/N\right\rbrace} (\norm{\cos(\cdot)}_{L^2([0,r]\meas_{N})}+C \delta^{1/N} r^{N-1}\})  \Big)  \\
\leq& C\left(\delta^{1/2}+r^{N/2}\delta^{\min\{1/2,1/N\}}\right) .
\end{align*}
The improved estimate for the first derivative and for the domain $[r-\eta, r+\eta]$ is analogous.
\end{proof}

\begin{lemma}\label{prop:improvedpoincarefarfrom}
For any $N>1$ there exist $\bar{D}=\bar{D}(N)<\pi$ and $\alpha=\alpha(N)>0$ such that the following holds.  Let $([0,D],\sfd_{eucl}, \mm)$ be a one dimensional $\CD(N-1,N)$ m.m.s. with $D\ge \bar{D}$ 
and $u^{*}$ any first Neumann eigenfunction, with $\|u^{*}\|_{L^{2}(\mm)}=1$.

Then for any $v\in L^{2}([0,D],\meas)$ with $\norm{v}_{L^2(\mm)}=1$ such that $\abs{\int vu^*\mm}\le1/2$ we have
\begin{equation*}
N+\alpha\le \int_{[0,D]}\abs{v'}^2\mm.
\end{equation*} 
\end{lemma}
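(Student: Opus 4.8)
The plan is to argue by a spectral-gap/orthogonality argument, localizing the energy of $v$ against the second Neumann eigenfunction. Recall that on $([0,D],\sfd_{eucl},\mm)$ the Neumann spectrum is discrete; denote by $\lambda^{1,2}=\lambda^{1,2}_{([0,D],\sfd_{eucl},\mm)}$ the first eigenvalue, realized by $u^{*}$ (with $\|u^{*}\|_{L^2(\mm)}=1$ and $\int u^{*}\mm=0$), and by $\lambda^{2,2}$ the second one (the minimum of the Rayleigh quotient over functions with null mean which are also $L^2$-orthogonal to $u^{*}$). The key structural input is that the gap between $\lambda^{1,2}$ and $\lambda^{2,2}$ is bounded below by a positive constant depending only on $N$, uniformly for $D$ close to $\pi$. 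I would establish this by a compactness/stability argument: by \autoref{lem:BigDLapi} we have $\lambda^{1,2}\to N$ as $D\to\pi$, and by \autoref{prop:hCDN-1N} the densities $h$ of such spaces converge (after the obvious identification of intervals) to the model density $h_N$; hence any sequence of such $([0,D_j],\sfd_{eucl},\mm_j)$ with $D_j\to\pi$ has second eigenvalues converging to $\lambda^{2,2}_{(\mathbb S^1\text{-model})}=2N/(N-1)\cdot$(something)$>N$ — more precisely to the second Neumann eigenvalue of the model $\CD(N-1,N)$ segment $([0,\pi],h_N)$, which equals $2N$ (the eigenvalue associated to $\cos(2\,\cdot)$-type eigenfunctions is strictly larger than $N$). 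So there exist $\bar D(N)<\pi$ and $\alpha_0(N)>0$ with $\lambda^{2,2}\ge N+2\alpha_0$ for all $D\ge\bar D$.

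Granting this gap, the main estimate is elementary. Given $v$ with $\|v\|_{L^2(\mm)}=1$ and $\abs{\int v\,u^{*}\mm}\le 1/2$, I would first reduce to mean-zero $v$: if $\bar v:=\int v\,\mm$, then $\|v-\bar v\|_{L^2(\mm)}^2=1-\bar v^2$ and $\int (v-\bar v)u^{*}\mm=\int v u^{*}\mm$, while $\int|v'|^2\mm=\int|(v-\bar v)'|^2\mm$; I will treat below the case $v$ mean-zero and restore the constant $\bar v^2$ at the end (it only helps, since it scales the energy bound down by a factor that I control away from $1$ using $\abs{\int vu^{*}\mm}\le 1/2$ — see the last step). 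Expand the mean-zero $v$ in the Neumann eigenbasis $\{u^{*}=:\varphi_1,\varphi_2,\dots\}$ (orthonormal in $L^2(\mm)$, all with null mean): $v=\sum_{k\ge1}c_k\varphi_k$ with $\sum c_k^2=\|v\|_2^2$ and $\int|v'|^2\mm=\sum_k\lambda^{k,2}c_k^2$. The hypothesis gives $c_1^2=\big(\int v u^{*}\mm\big)^2\le 1/4$, hence $\sum_{k\ge2}c_k^2=\|v\|_2^2-c_1^2\ge \|v\|_2^2-\tfrac14$. Therefore
\begin{equation*}
\int_{[0,D]}|v'|^2\mm=\lambda^{1,2}c_1^2+\sum_{k\ge2}\lambda^{k,2}c_k^2\ge N c_1^2+(N+2\alpha_0)\Big(\|v\|_2^2-c_1^2\Big)=N\|v\|_2^2+2\alpha_0\Big(\|v\|_2^2-c_1^2\Big).
\end{equation*}

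For genuinely mean-zero $v$ with $\|v\|_2=1$ this reads $\int|v'|^2\mm\ge N+2\alpha_0(1-c_1^2)\ge N+2\alpha_0\cdot\tfrac34=N+\tfrac32\alpha_0$, so $\alpha:=\tfrac32\alpha_0$ works. For general $v$ with $\|v\|_2=1$: apply the displayed inequality to $v-\bar v$, note $\|v-\bar v\|_2^2=1-\bar v^2$ and that the coefficient $c_1$ of $v-\bar v$ on $u^{*}$ satisfies $c_1^2=\big(\int vu^{*}\mm\big)^2\le 1/4$, so $\|v-\bar v\|_2^2-c_1^2\ge 1-\bar v^2-\tfrac14$; if $\bar v^2\le \tfrac14$ this is $\ge\tfrac12$ and we win with $\alpha=\alpha_0$, while if $\bar v^2>\tfrac14$ we instead use that $\int|v'|^2\mm$ controls $v-\bar v$ via the \emph{plain} Poincaré inequality $\int|(v-\bar v)'|^2\mm\ge\lambda^{1,2}\|v-\bar v\|_2^2\ge N(1-\bar v^2)$ together with $\abs{\int vu^{*}\mm}\le\tfrac12$ forcing $\abs{\bar v}\le\abs{\bar v\int u^{*}\mm}+\tfrac12$... — cleaner: since $\int u^{*}\mm=0$, $\int vu^{*}\mm=\int(v-\bar v)u^{*}\mm$ is unaffected by $\bar v$, and $\bar v^2<1$ strictly because $v$ is not $\mm$-a.e. constant (a constant has $\abs{\int vu^{*}\mm}=0\le 1/2$ but then its energy is $0$; to exclude this degenerate linear dependence one observes the hypothesis $\|v\|_2=1$ with $v$ constant forces $\bar v^2=1$ and the conclusion trivially fails, so the statement implicitly excludes it — I would add the harmless standing assumption $v$ non-constant, or equivalently note the inequality is vacuous otherwise). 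Shrinking $\alpha$ to absorb these book-keeping constants gives the claim.

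\textbf{Main obstacle.} The only nontrivial point is the uniform spectral-gap bound $\lambda^{2,2}\ge N+2\alpha_0$ for $D\ge\bar D(N)$; everything else is bookkeeping. I would prove it by contradiction and compactness: a sequence $\mm_j=h_j\L^1$ of $\CD(N-1,N)$ densities on $[0,D_j]$ with $D_j\to\pi$ and $\lambda^{2,2}_j\to N$; by \autoref{prop:hCDN-1N} and \autoref{cor:estdens} the $h_j$ converge uniformly on compacts of $(0,\pi)$ to $h_N$, the associated second eigenfunctions $\varphi_2^{(j)}$ are bounded in $W^{1,2}$ (unit $L^2$ norm, energy $\to N$) hence subconverge in $L^2$ and weakly in $W^{1,2}$ to a limit $\varphi_\infty$ which is mean-zero (limit of the null-mean constraint), $L^2$-orthogonal to $\cos(\cdot)$ (limit of orthogonality to $\varphi_1^{(j)}\to\sqrt{N+1}\cos(\cdot)$ by \autoref{prop:estdisteigcos}), of unit $L^2(\mm_N)$ norm, and with $\int|\varphi_\infty'|^2\mm_N\le N$ by lower semicontinuity — contradicting that on the model space the Rayleigh quotient over mean-zero functions orthogonal to $\cos$ is $\ge\lambda^{2,2}_{\rm model}>N$. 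The value $\lambda^{2,2}_{\rm model}$ can be computed explicitly (it is $2N$, eigenfunction $\propto\cos(2\cdot)+\text{const}$ adjusted to have null mean, or one simply notes strict positivity suffices). This compactness step uses the $\CD$ structure essentially and is where the real content lies.
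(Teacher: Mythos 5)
Your proposal is correct in spirit and shares its real content with the paper's argument, but it routes through an extra layer. The paper does \emph{not} introduce the second Neumann eigenvalue: it argues directly by contradiction, taking a sequence $D_n\uparrow\pi$ with functions $v_n$ of unit norm, energy tending to $N$, and $\abs{\int v_n u_n^*\,\mm_n}\le 1/2$; using \autoref{cor:estdens} the densities converge uniformly to $h_N$, the pairs $(u_n^*,v_n)$ are uniformly H\"older on compact subsets of the interior, and Arzel\`a--Ascoli plus lower semicontinuity of the energy produce limits $u^*,v$ on the model space, both of unit norm and energy $\le N$, hence both first eigenfunctions; uniqueness of the first eigenfunction up to sign forces $\abs{\int u^*v\,\mm_N}=1$, contradicting $\le 1/2$. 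Your plan instead first proves a uniform gap $\lambda^{2,2}\ge N+2\alpha_0$ by the same compactness mechanism and then concludes by expanding $v$ in the Neumann eigenbasis. This works, but it buys nothing and costs two extra ingredients: (i) you must justify that the degenerate Sturm--Liouville operator with weight $h$ (vanishing at the endpoints) has discrete spectrum and a complete eigenbasis --- or, more economically, replace the full expansion by the two-term decomposition $v=c_1u^*+w$ with $w\perp u^*$ and the cross-term cancellation $\int (u^*)'w'\,\mm=\lambda^{1,2}\int u^*w\,\mm=0$; (ii) your compactness step for near-second-eigenfunctions is exactly as hard as the paper's direct step. Two small corrections: the second Neumann eigenvalue of the model $([0,\pi],\sin^{N-1}t\,dt/\omega_N)$ is $2(N+1)$, not $2N$ (eigenvalues $k(k+N-1)$), though as you say only strict positivity of the gap matters.

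The one point where your write-up genuinely stalls is the reduction to mean-zero $v$. Adding ``$v$ non-constant'' does not repair the degenerate case: $v=a+b\varphi$ with $a^2+b^2=1$, $b$ small and $\varphi$ mean-zero of unit norm satisfies all the stated hypotheses yet has energy $b^2\int\abs{\varphi'}^2\mm$, which can be far below $N$. The statement is only true under the additional hypothesis $\int v\,\mm=0$ (which holds in every application of the lemma, and which the paper's own proof also uses implicitly when it concludes that the limit $v$ is a first eigenfunction rather than a constant). With $\int v\,\mm=0$ imposed, your orthogonal-decomposition computation closes cleanly with $\alpha=\tfrac32\alpha_0$ and the trailing bookkeeping becomes unnecessary.
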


\begin{proof}
We argue by contradiction.

Suppose there is a sequence of $\CD(N-1,N)$ measures $\mm_{n}=h_{n} \L^{1}$ 
with  $\supp h_{n}=[0,D_{n}]$ and $D_n\uparrow \pi$ satisfying the following: 
for every $n$ there exists   $v_n\in W^{1,2}([0,D_{n}],\sfd_{eucl},\meas_n)$ 
with  $\norm{v_n}_{L^2(\meas_n)}=1$ such that 
\begin{equation}\label{eq:convergence}
\int_{[0,D_{n}]}\abs{v_n'}^2\mm_n\to N \quad\text{as $n\to\infty$, \; and }  \quad  \abs{\int v_n u^*_n\mm_n}\le\frac{1}{2}, 
\end{equation} 
where $u^*_n$ is a first  Neumann  eigenfunction on $([0,D_{n}], \sfd_{eucl}, h_{n}\L^{1})$, i.e.
\begin{equation}\label{eq:u*nEF}
 \int_{[0,D_{n}]} \left|u^{*}_{n} \right|^{2} \mm_{n}=1, \quad  \int_{[0,D_{n}]} \left|(u^{*}_{n})' \right|^{2} \mm_{n}=\lambda_{n}\to N, 
\end{equation}
where in the last identity we used \eqref{eq:formulaDelta} and the convergence of 
$\lambda_{n}$ to $N$ follows from \autoref{lem:BigDLapi}.

From \autoref{cor:estdens}, the fact that $\supp h_{n}=[0,D_{n}]$ with $D_{n}\uparrow \pi$ implies that $(h_{n})$ (extended to the constant $h(D_{n})$ on $[D_{n},\pi]$) are converging uniformly to the  model 1-dimensional  $\CD(N-1,N)$-density  $h_{N}=\frac{1}{c_{N}'}\sin^{N-1}$ on $[0,\pi]$. In particular, for every $\eta\in (0,\pi/2)$ the densities $h_{n}$ restricted to $[\eta, 1-\eta]$ are bounded above and below by strictly positive constants.

The bounds \eqref{eq:u*nEF} then imply that $u^{*}_{n}$ (resp. $v_{n}$) are uniformly $1/2$-H\"older continuous on  $[\eta, \pi-\eta]$ for every $\eta\in (0,\pi/2)$. 

Thus, by Arzel\'a-Ascoli Theorem combined with a standard diagonal argument, there exists  $u^{*}:[0,\pi]\to \R$ (resp. $v:[0,\pi]\to \R$) and a (non-relabeled for simplicity) subsequence such that $u^{*}_{n}\to u^{*}$ (resp. $v_{n}\to v$) uniformly on $[\eta, \pi-\eta]$ for every $\eta\in (0,\pi/2)$. It is also easy to check that
$$
\int_{[0,\pi]} u^{*}_{n} h_{n} \, \varphi \, \L^{1} \to \int_{[0,\pi]} u^{*} h_{N} \, \varphi \, \L^{1}, \quad \int_{[0,\pi]} v_{n} h_{n} \, \varphi \, \L^{1} \to \int_{[0,\pi]} v h_{N} \, \varphi \, \L^{1} \quad \forall  \varphi\in C([0,\pi]).
$$
Combining the last weak convergence statement with the bounds \eqref{eq:convergence}, \eqref{eq:u*nEF} and with \cite[Theorem 6.3]{GMS2013} gives that 
\begin{align}\label{eq:u*21}
&\|u^{*}\|_{L^{2}([0,\pi], \mm_{N})}=\|v\|_{L^{2}([0,\pi], \mm_{N})}=1, \quad \left|\int_{[0,\pi]} u^{*} v \, \mm_{N} \right| \leq \frac{1}{2}, \nonumber \\
&\int_{[0,\pi]} \left| (u^{*})' \right|^{2} \mm_{N}\leq N, \quad \int_{[0,\pi]} \left|  v' \right|^{2} \mm_{N} \leq N. \nonumber
\end{align}
Therefore, both $u^{*}$ and $v$ are first Neumann eigenfunctions on the model space $([0,\pi], \sfd_{eucl}, \mm_{N})$. However the first eigenfunction is unique up to a sign, thus  it must hold $\left|\int_{[0,\pi]} u^{*} v \, \mm_{N} \right|=1$. Contradiction.
\end{proof}

\begin{corollary}\label{cor:useful}
For every $N>1$ there exists $\beta=\beta(N)>0$ with the following property. Let $(I,\sfd_{eucl},\meas)$ be a one dimensional $\CD(N-1,N)$ m.m.s.  with $\meas(I)=1$ and satisfying
\begin{equation*}
\lambda^{1,2}_{(I,\sfd_{eucl},\meas)}-N<\beta.
\end{equation*}
Then, for any $u\in W^{1,2}(I,\sfd_{eucl},\meas)$ with $\norm{u}_{L^2(\meas)}=1$ and $\abs{\int_I uu^*\,\mm}\le 1/2$, where $u^{*}$ is a first Neumann eigenfunction with $\|u^{*}\|_{L^{2}(\mm)}=1$, it holds
\begin{equation*}
\lambda^{1,2}_{(I,\sfd_{eucl},\meas)}+\beta<\int\abs{u'}^2\mm.
\end{equation*}
\end{corollary}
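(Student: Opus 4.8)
The plan is to deduce the statement directly from \autoref{thm:improvedspectralNeumann1D} and \autoref{prop:improvedpoincarefarfrom}, with no new argument needed beyond a careful choice of the constant $\beta$. Let $\bar D=\bar D(N)<\pi$ and $\alpha=\alpha(N)>0$ be the constants furnished by \autoref{prop:improvedpoincarefarfrom}, and let $C_N>0$ be the constant appearing in \eqref{eq:1DQuantObata}. I would then set
\[
\beta:=\min\left\{\tfrac{\alpha}{2},\ C_N(\pi-\bar D)^N\right\}>0 .
\]

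First I would note that, since $\meas$ is a probability measure whose density is a $\CD(N-1,N)$ density, Bonnet--Myers gives $\diam(I)\le\pi$ and $I$ is a bounded interval, so up to a translation we may assume $I=[0,\diam(I)]$. If $\lambda^{1,2}_{(I,\sfd_{eucl},\meas)}-N<\beta$, then \eqref{eq:1DQuantObata} yields $C_N(\pi-\diam(I))^N\le\lambda^{1,2}_{(I,\sfd_{eucl},\meas)}-N<\beta\le C_N(\pi-\bar D)^N$, hence $\diam(I)>\bar D$, so that $I$ lies in the range of applicability of \autoref{prop:improvedpoincarefarfrom}. Then, for $u\in W^{1,2}(I,\sfd_{eucl},\meas)$ with $\norm{u}_{L^2(\meas)}=1$ and $\abs{\int_I uu^*\,\meas}\le 1/2$ (where $u^*$ is a first Neumann eigenfunction normalised by $\norm{u^*}_{L^2(\meas)}=1$, the same normalisation as in \autoref{prop:improvedpoincarefarfrom}), \autoref{prop:improvedpoincarefarfrom} gives $\int_I\abs{u'}^2\meas\ge N+\alpha$. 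Combining this with $\lambda^{1,2}_{(I,\sfd_{eucl},\meas)}<N+\beta$ and $\alpha\ge 2\beta$, one obtains
\[
\int_I\abs{u'}^2\meas\ \ge\ N+\alpha\ \ge\ N+2\beta\ >\ \lambda^{1,2}_{(I,\sfd_{eucl},\meas)}+\beta,
\]
which is exactly the asserted inequality.

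Honestly there is no real obstacle here: the substantive content (a compactness/rigidity argument on the model one--dimensional space) is already packaged in \autoref{prop:improvedpoincarefarfrom}, and \autoref{thm:improvedspectralNeumann1D} supplies precisely the diameter lower bound needed to enter its hypotheses. The only points requiring (minor) care are the bookkeeping of the $N$--dependent constants in the choice of $\beta$, checking that $u^*$ is normalised consistently across the two statements, and the trivial reduction to $I=[0,\diam(I)]$ with $\diam(I)\le\pi$.
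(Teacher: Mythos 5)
Your proof is correct and follows exactly the route the paper takes: use the one-dimensional quantitative diameter bound \eqref{eq:1DQuantObata} to force $\diam(I)>\bar D$, then invoke \autoref{prop:improvedpoincarefarfrom} and shrink $\beta$ (your explicit choice $\beta=\min\{\alpha/2,\,C_N(\pi-\bar D)^N\}$ is precisely the quantification the paper leaves implicit). No issues.
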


\begin{proof}
First choose $\beta>0$ sufficiently small so that, by \autoref{thm:improvedspectralNeumann1D},   the diameter of $(I,\sfd_{eucl},\meas)$ is bigger than $\bar{D}$. Then conclude by \autoref{prop:improvedpoincarefarfrom} (and decrease the constant $\beta>0$ if necessary). 
\end{proof}

\begin{proposition}\label{prop:deficitestimatesdist1}
For every $N>1$ there exists $\beta=\beta(N)>0$ with the following property. Let $(I,\sfd_{eucl},\meas)$ be a one dimensional $\CD(N-1,N)$ m.m.s.  with $\meas(I)=1$. Assume there exists $v\in W^{1,2}(I,\sfd_{eucl},\meas)$ with $\norm{v}_{L^{2}(\mm)}=1$ satisfying 
\begin{equation}\label{eq:smallgap}
\int_I\abs{v'}^2\mm-N<\beta.
\end{equation}
Then it holds
\begin{equation}\label{eq:controldistsob}
 \min\left\lbrace\norm{v-u^*}^2_{W^{1,2}(\mm)},\norm{v+u^*}^2_{W^{1,2}(\mm)} \right\rbrace\le C\left(\int\abs{v'}^2\mm-\int\abs{(u^*)'}^2\mm\right), 
\end{equation}
where $u^{*}$ is a first Neumann eigenfunction with $\|u^{*}\|_{L^{2}(\mm)}=1$.
\end{proposition}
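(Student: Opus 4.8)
The plan is the classical variational argument: project $v$ onto the line spanned by the first eigenfunction $u^*$ and estimate the orthogonal remainder by the Rayleigh deficit, exploiting a \emph{uniform} spectral gap above $\lambda:=\int_I\abs{(u^*)'}^2\,\meas=\lambda^{1,2}_{(I,\sfd_{eucl},\meas)}$. Set $a:=\int_I v\,u^*\,\meas$ and $w:=v-a\,u^*$. Since $v$ and the Neumann eigenfunction $u^*$ both have zero $\meas$-average (an eigenfunction with positive eigenvalue integrates to zero, as one sees testing its equation against the constant), $w$ has zero average as well, $\int_I w\,u^*\,\meas=0$ and $\norm{w}^2_{L^2(\meas)}=1-a^2$; moreover, integrating by parts against $\Delta u^*=-\lambda u^*$ (legitimate because $u^*\in\dom(\Delta)$) gives $\int_I w'\,(u^*)'\,\meas=\lambda\int_I w\,u^*\,\meas=0$, whence $\int_I\abs{v'}^2\,\meas=a^2\lambda+\int_I\abs{w'}^2\,\meas$. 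Writing $\delta:=\int_I\abs{v'}^2\,\meas-\lambda$ for the right-hand side of \eqref{eq:controldistsob} (which is nonnegative, since $v$ competes in the variational characterization of $\lambda^{1,2}_{(I,\sfd_{eucl},\meas)}$), this identity rearranges to $\delta=\int_I\abs{w'}^2\,\meas-\lambda(1-a^2)$.

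The crucial point is a lower bound of the form $\int_I\abs{w'}^2\,\meas\ge(\lambda+c(N))\,\norm{w}^2_{L^2(\meas)}$ for some $c(N)>0$, valid once $\beta$ is small. Indeed, by hypothesis $\lambda\le\int_I\abs{v'}^2\,\meas<N+\beta$, so choosing $\beta=\beta(N)$ small enough, \autoref{thm:improvedspectralNeumann1D} forces $\diam(I)\ge\bar D(N)$, the threshold appearing in \autoref{prop:improvedpoincarefarfrom}. Applying that proposition to $w/\norm{w}_{L^2(\meas)}$ — which has unit $L^2(\meas)$-norm and is orthogonal to $u^*$, hence a fortiori satisfies $\abs{\int_I(w/\norm{w}_{L^2(\meas)})\,u^*\,\meas}\le1/2$ — yields $\int_I\abs{w'}^2\,\meas\ge(N+\alpha(N))\,\norm{w}^2_{L^2(\meas)}\ge(\lambda+\tfrac12\alpha(N))\,\norm{w}^2_{L^2(\meas)}$, the last inequality holding after shrinking $\beta$ so that $\lambda<N+\tfrac12\alpha(N)$. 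Substituting into the identity for $\delta$ gives $\delta\ge\tfrac12\alpha(N)(1-a^2)$, i.e. $1-a^2\le\tfrac{2}{\alpha(N)}\,\delta$. (If $w\equiv0$ then $a^2=1$ and $v=\pm u^*$, so \eqref{eq:controldistsob} is trivial.)

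It remains to convert this into the $W^{1,2}$-estimate. After replacing $u^*$ by $-u^*$ if necessary, assume $a\in[0,1]$, so that $(1-a)^2\le1-a^2$. Then $\norm{v-u^*}^2_{L^2(\meas)}=\norm{w}^2_{L^2(\meas)}+(1-a)^2\le2(1-a^2)$, and, using once more $\int_I w'\,(u^*)'\,\meas=0$, $\norm{(u^*)'}^2_{L^2(\meas)}=\lambda$ and $\int_I\abs{w'}^2\,\meas=\delta+\lambda(1-a^2)$, also $\norm{(v-u^*)'}^2_{L^2(\meas)}=\int_I\abs{w'}^2\,\meas+(1-a)^2\lambda\le\delta+2\lambda(1-a^2)$. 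Adding the two estimates and using $1-a^2\le\tfrac{2}{\alpha(N)}\delta$ together with $\lambda<N+1$, we obtain $\min\{\norm{v-u^*}^2_{W^{1,2}(\meas)},\norm{v+u^*}^2_{W^{1,2}(\meas)}\}\le\norm{v-u^*}^2_{W^{1,2}(\meas)}\le C(N)\,\delta$, which is \eqref{eq:controldistsob}.

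I expect the only genuinely delicate ingredient to be the uniform gap constant $\alpha(N)$ used in the second paragraph; but this is exactly \autoref{prop:improvedpoincarefarfrom} (with \autoref{thm:improvedspectralNeumann1D} entering only to certify that $\diam(I)$ is close enough to $\pi$ for that proposition to apply). Everything else is elementary Hilbert-space geometry in $L^2(\meas)$ plus a single integration by parts against the eigenfunction equation, so no further obstacle is anticipated.
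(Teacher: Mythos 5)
Your proof is correct and follows essentially the same route as the paper: both arguments rest on the uniform spectral gap above $\lambda^{1,2}_{(I,\sfd_{eucl},\mm)}$ for functions (nearly) orthogonal to $u^*$ supplied by \autoref{prop:improvedpoincarefarfrom}, combined with an expansion of the Dirichlet energy using the weak eigenfunction equation. The only cosmetic difference is that you project $v$ orthogonally onto $u^*$ and apply the gap to the exactly orthogonal remainder $w$, whereas the paper applies it (via \autoref{cor:useful}) directly to $v-u^*$ after first showing $\abs{\int vu^*\,\mm}>1/2$; note that, like the paper, you implicitly use that $v$ has zero mean, which is not written in the statement but is exactly how the proposition is invoked later.
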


\begin{proof}
We begin by rewriting
\begin{align}
\int\abs{v'}^2\mm-\int\abs{(u^*)'}^2\mm=&\int\abs{(v-u^*)'}^2\mm+2\int (v-u^*)'\,(u^*)' \,\mm \nonumber\\
=& \int\abs{(v-u^*)'}^2\mm- 2  \lambda^{1,2}_{(I,\sfd_{eucl},\mm)} \left( 1- \int v \,u^{*} \,\mm \right)  \nonumber\\
=& \int\abs{(v-u^*)'}^2\mm-   \lambda^{1,2}_{(I,\sfd_{eucl},\mm)}  \int (v-u^{*})^{2} \,\mm. \label{eq:comput}
\end{align}
Now \eqref{eq:smallgap} implies that $\abs{\int vu^*\, \mm}>1/2$ by \autoref{cor:useful}. Hence, assuming without loss of generality that $\int u^*v \, \mm>1/2$, we get $\abs{\int u^*(u^*-v)\mm}<1/2$. Therefore, \autoref{cor:useful} yields
\begin{equation*}
\int\abs{(v-u^*)'}^2\mm\ge (\lambda^{1,2}_{(I,\sfd_{eucl},\mm)}+\beta)\norm{v-u^*}_2^2.
\end{equation*}
The combination of the last estimate with \eqref{eq:comput} gives
\begin{equation}\label{eq:controldist}
\norm{v-u^*}^2_{2} \le C\left(\int\abs{v'}^2\mm-\int\abs{(u^*)'}^2\mm\right),
\end{equation}
with $C:=1/\beta$. We now improve \eqref{eq:controldist}  to $W^{1,2}$-closeness, namely \eqref{eq:controldistsob}. 
  In order to do so, it suffices to observe that the estimates we obtained above yield
\begin{align*}
\int\abs{(v-u^*)'}^2\mm\le &\, \lambda^{1,2}_{(I,\sfd_{eucl},\mm)}\norm{v-u^*}^{2}_{2}+\int\abs{v'}^2\mm-\int\abs{(u^*)'}^2\mm\\
\le &\, C(1+\lambda^{1,2}_{(I,\sfd_{eucl},\mm)})\left(\int\abs{v'}^2\mm-\int\abs{(u^*)'}^2\mm\right).
\end{align*}
\end{proof}

\begin{theorem}[1-dimensional Quantitative Obata's Theorem on the function]\label{thm:mainthm1d}
For every $N>1$ there exist constants $C=C(N)>0$ and $\delta_0= \delta_0(N)>0$ with the following property.
Let $([0,D],\dist_{eucl},\meas)$ be a one dimensional $\CD(N-1,N)$ m.m.s. and let  $u\in\Lip(I)$ satisfy $\int u \,\mm=0$ and $\int u^2 \,\mm=1$.
If
\begin{equation*}
\delta:=\int \abs{u'}^2\mm-N\le\delta_0,
\end{equation*}
then 
\begin{equation}\label{eq:thm1d}
\min\left\lbrace\norm{u-\sqrt{N+1}\cos(\cdot)}_{W^{1,2}(\meas)},\norm{u+\sqrt{N+1}\cos(\cdot)}_{W^{1,2}(\meas)} \right\rbrace\le C\delta^{\min\left\lbrace 1/2,1/N\right\rbrace }. 
\end{equation}
Moreover, setting  $r=\delta^{\gamma/N}$ for some $\gamma\in (0,1)$,  for any $\eta\in (0, r/10)$ it holds
\begin{align}
&\min\left\lbrace\norm{u-\sqrt{N+1}\cos(\cdot)}_{W^{1,2}([0,r],\meas)},\norm{u+\sqrt{N+1}\cos(\cdot)}_{W^{1,2}([0,r],\meas)} \right\rbrace \nonumber\\
& \qquad \qquad \le C\left(\delta^{1/2}+r^{N/2}\delta^{{\min\left\lbrace 1/2,1/N\right\rbrace}}\right).  \label{eq:impr0r2}\\
&\min\left\lbrace \norm{u^*-\sqrt{N+1}\cos(\cdot)}_{W^{1,2}([r-\eta, r+\eta],\mm)},\norm{u^*+\sqrt{N+1}\cos(\cdot)}_{W^{1,2}([r-\eta, r+\eta],\mm)}\right\rbrace \nonumber \\
& \qquad \qquad \le C(N)\left(\delta^{1/2}+(r^{N-1} \eta)^{1/2}\delta^{\min\{1/2,1/N\}} \right). \label{eq:impr0r21}
\end{align}
\end{theorem}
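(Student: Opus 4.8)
The plan is to combine the two previous results of this section into the statement. The function $u$ in the hypothesis is an arbitrary Lipschitz function with null mean value, unit $L^2$ norm and small Dirichlet energy, whereas \autoref{prop:estdisteigcos} and \autoref{rm:impr0r1} concern a genuine Neumann eigenfunction $u^{*}$. So the first step is to invoke \autoref{prop:deficitestimatesdist1}: provided $\delta\le\delta_0$ is small enough (in particular $\delta<\beta(N)$), it gives a first eigenfunction $u^{*}$ with $\|u^{*}\|_{L^2(\mm)}=1$ such that, up to replacing $u^{*}$ by $-u^{*}$,
\begin{equation*}
\norm{u-u^{*}}_{W^{1,2}(\meas)}^2\le C\left(\int\abs{u'}^2\mm-\int\abs{(u^{*})'}^2\mm\right)\le C\left(\int\abs{u'}^2\mm-N\right)=C\delta,
\end{equation*}
where we used the Lichnerowicz bound $\int\abs{(u^{*})'}^2\mm=\lambda^{1,2}_{([0,D],\dist_{eucl},\meas)}\ge N$. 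Note in particular that the eigenvalue $\lambda=\lambda^{1,2}_{([0,D],\dist_{eucl},\meas)}$ satisfies $\lambda\ge N$ and, by \autoref{lem:BigDLapi} together with \autoref{thm:improvedspectralNeumann1D} (which forces $D$ close to $\pi$ once $\delta$ is small), also $\lambda\le 2N$ for $\delta_0$ small; hence $u^{*}$ is admissible in \autoref{prop:estdisteigcos}, with deficit $\int\abs{(u^{*})'}^2\mm-N=\lambda-N\le\delta$.

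**Combining the estimates.** The second step is just the triangle inequality. Applying \autoref{prop:estdisteigcos} to $u^{*}$ (with its deficit $\le\delta$) we obtain, after fixing the sign so that the minimum is attained by the ``$-$'' combination both here and in the line above (one checks the two sign choices are compatible for $\delta_0$ small, since $\|u-u^{*}\|$ and $\|u^{*}-\sqrt{N+1}\cos\|$ are both $o(1)$ while $\|2\sqrt{N+1}\cos\|_{L^2(\mm)}$ is bounded away from $0$),
\begin{equation*}
\norm{u-\sqrt{N+1}\cos(\cdot)}_{W^{1,2}(\meas)}\le\norm{u-u^{*}}_{W^{1,2}(\meas)}+\norm{u^{*}-\sqrt{N+1}\cos(\cdot)}_{W^{1,2}(\meas)}\le C\delta^{1/2}+C\delta^{\min\{1/2,1/N\}},
\end{equation*}
which is \eqref{eq:thm1d}. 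For the localized estimates \eqref{eq:impr0r2} and \eqref{eq:impr0r21} one argues identically on the intervals $[0,r]$ and $[r-\eta,r+\eta]$: on these sub-intervals $\norm{u-u^{*}}_{W^{1,2}([0,r],\meas)}\le\norm{u-u^{*}}_{W^{1,2}(\meas)}\le C\delta^{1/2}$, and the remaining pieces $\norm{u^{*}-\sqrt{N+1}\cos(\cdot)}_{W^{1,2}([0,r],\meas)}$ and $\norm{u^{*}-\sqrt{N+1}\cos(\cdot)}_{W^{1,2}([r-\eta,r+\eta],\meas)}$ are bounded by \eqref{eq:impr0r1} and \eqref{eq:impr0r11} of \autoref{rm:impr0r1} by $C(\delta^{1/2}+r^{N/2}\delta^{\min\{1/2,1/N\}})$ and $C(\delta^{1/2}+(r^{N-1}\eta)^{1/2}\delta^{\min\{1/2,1/N\}})$ respectively. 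Adding up gives the claimed bounds.

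**Main obstacle.** The computations are all routine triangle-inequality bookkeeping; the only genuine subtlety — and the step I would be most careful about — is the bookkeeping of \emph{signs}. The minima in \eqref{eq:thm1d}, in \autoref{prop:deficitestimatesdist1}, and in \autoref{prop:estdisteigcos}/\autoref{rm:impr0r1} each hide a choice of $\pm$, and one must verify these can be chosen consistently so that the triangle inequality closes. This is where the quantitative smallness enters: since $u$, $u^{*}$ and $\sqrt{N+1}\cos(\cdot)$ are all within $o(1)$ in $L^2(\mm)$ of one of $\pm$ each other while the ``opposite'' distances are $\ge c(N)>0$ (because $\norm{\sqrt{N+1}\cos(\cdot)}_{L^2(\mm)}$ is bounded below, using \autoref{cor:estintegrals} and $D$ close to $\pi$), for $\delta_0=\delta_0(N)$ small the signs are forced and unambiguous. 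A secondary point to record cleanly is that one must first certify $\lambda^{1,2}_{([0,D],\dist_{eucl},\meas)}\in[N,2N]$ before quoting \autoref{prop:estdisteigcos} and \autoref{cor:smallhessian1d}; this follows by chaining \autoref{thm:improvedspectralNeumann1D} (to get $D\to\pi$ as $\delta\to 0$) with \autoref{lem:BigDLapi} (to get $\lambda\to N$), at the cost of possibly shrinking $\delta_0$.
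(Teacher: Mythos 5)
Your proposal is correct and follows essentially the same route as the paper: first \autoref{prop:deficitestimatesdist1} to get $W^{1,2}$-closeness of $u$ to a first eigenfunction $u^{*}$ (using $\lambda^{1,2}\ge N$ to replace $\int|(u^{*})'|^2\mm$ by $N$), then \autoref{prop:estdisteigcos} and \autoref{rm:impr0r1} for the eigenfunction-to-cosine estimates, concluded by the triangle inequality. Your extra care about certifying $\lambda\in[N,2N]$ via \autoref{thm:improvedspectralNeumann1D} and \autoref{lem:BigDLapi}, and about the consistency of the sign choices in the three minima, fills in details the paper leaves implicit but does not change the argument.
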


\begin{proof}
First apply \autoref{prop:deficitestimatesdist1} to bound the $W^{1,2}(\meas)$-distance between $u$ and a first eigenfunction of the Neumann Laplacian on $([0,D],\dist_{eucl},\meas)$, then apply \autoref{prop:estdisteigcos} (respectively \autoref{rm:impr0r1}) to bound the $W^{1,2}(\meas)$-distance (respectively the $W^{1,2}([0,r],\meas)$ or $W^{1,2}([r-\eta, r+\eta],\meas)$ distance)  between the first eigenfunction and the normalized cosine. The sought estimate follows by the triangle inequality. 
\end{proof}


\section{Quantitative Obata's Theorem on the diameter}\label{S:Obatadiam}

Building on top of the one-dimensional results obtained in Section \ref{S:1d},
we will derive several quantitative estimates for 
a general essentially non-branching  m.m.s. $(X,\sfd,\mm)$ verifying $\CD(K,N)$.

Given a m.m.s. $(X,\sfd,\mm)$, the perimeter  $\mathsf{P}(E)$ of a Borel subset  $E \subset X$ is defined as 
\begin{equation}  \label{eq:defPer}
\mathsf{P}(E):= \inf\left\{\liminf_{n\to \infty} \int_{X} |\nabla u_n| \,\mm \,:\,  u_n \in \Lip(X), \, u_n\to \chi_E \text{ in } L^1_{\loc}(X)\right\}, 
\end{equation}
where  $\chi_E$ is the characteristic  function of $E$. Accordingly $E \subset X$ has finite perimeter in $(X,\sfd,\mm)$ if and only if $\mathsf{P}(E)< \infty$. 
\\The isoperimetric profile $\cI_{(X,\sfd,\mm)}:[0,1]\to [0,\infty)$ is given by 
\begin{equation}  \label{eq:defIsopProf}
\cI_{(X,\sfd,\mm)}(v):=\inf\{\mathsf{P}(E)\,: \, E \subset X, \, \mm(E)=v\} .
\end{equation}
Given a smooth Riemannian manifold $(M,g)$ with finite Riemannian volume ${\rm vol}_{g}(M)<\infty$, let us denote $\mm_{g}:= \frac{1}{{\rm vol}_{g}(M)} \, {\rm vol}_{g}$ the normalized Riemannian volume measure.
\\ We next recall the improved Levy-Gromov inequality obtained by Berard-Besson-Gallot \cite[Remark 3.1]{BerardBessonGallot85} for smooth  Riemannian $N$-manifolds with Ricci $\geq N-1$ and with upper bound on the diameter (see also \cite{Milman15}).

\begin{theorem}\label{thm:BBG}
Let $(M,\dist,\meas_{g})$ be the metric measure space associated to a Riemannian manifold $(M,g)$ with dimension $N\in \N, N\geq 2$, Ricci bounded from below by $N-1$ and diameter $D$ (recall that, by the Bonnet-Myers Theorem, $D\leq \pi$). Then, for any $v\in(0,1)$, it holds
\begin{equation}\label{eq:improvedLevyGromov}
\frac{\mathcal{I}_{(X,\dist,\meas)}(v)}{\mathcal{I}_{N}(v)}\ge \left(\frac{\int_0^{\frac{\pi}{2}}\left(\cos t\right)^{N-1}\di t}{\int_0^{\frac{D}{2}}\left(\cos t\right)^{N-1}\di t}\right)^{\frac{1}{N}}=:C_{N,D}\ge 1,
\end{equation}
where $\cI_{N}$, defined in    \eqref{eq:exprlower},  for $N\geq 2, N\in \N$ is the isoperimetric profile of the normalized round sphere of constant sectional curvature one $({\mathbb S}^{N}, \dist_{{\mathbb S}^{N}},\meas_{g_{{\mathbb S}^{N}}})$.
\end{theorem}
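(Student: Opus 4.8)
The plan is to deduce \autoref{thm:BBG} from the localization theorem \autoref{T:localize} together with the one-dimensional comparison \autoref{thm:improvedLevyGromov1D}, trading the original symmetrization argument of \cite{BerardBessonGallot85} for the (by now standard) needle-decomposition proof of Lévy--Gromov type inequalities, cf. \cite{CavallettiMondino17a}. Fix $v\in(0,1)$ and $\varepsilon>0$, and pick a Borel set $E\subset M$ of finite perimeter with $\meas_g(E)=v$ and $\mathsf{P}(E)\le \mathcal{I}_{(X,\dist,\meas)}(v)+\varepsilon$. Since $(M,\dist,\meas_g)$ is an essentially non-branching $\CD(N-1,N)$ space (as a smooth manifold with $\Ric_g\ge (N-1)g$) and $f:=\chi_E-v$ is bounded with $\int_M f\,\meas_g=0$, \autoref{T:localize} produces a partition $\{X_q\}_{q\in Q}$ and a $\CD(N-1,N)$ disintegration $\meas_g=\int_Q\meas_q\,\qq(dq)$, with $\qq\in\mathcal{P}(Q)$, such that for $\qq$-a.e.\ $q$ the set $X_q$ is a geodesic, $(X_q,\dist,\meas_q)$ is a one-dimensional $\CD(N-1,N)$ m.m.s., and $\int_{X_q}f\,\meas_q=0$, i.e.\ $\meas_q(E)=v$. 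Two remarks will be used: since $f$ takes only the values $1-v$ and $-v$ but vanishes $\meas_g$-a.e.\ on the non-transport part $Z$, we have $\meas_g(Z)=0$, so the disintegration is concentrated on the transport set; and every geodesic $X_q$ has length at most $\diam(M)=D$.

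The heart of the matter is the localization of perimeter: for a set of finite perimeter one has
\begin{equation*}
\mathsf{P}(E)\ \ge\ \int_Q \mathsf{P}_{\meas_q}(E)\,\qq(dq),
\end{equation*}
where $\mathsf{P}_{\meas_q}$ denotes the perimeter computed inside the one-dimensional m.m.s.\ $(X_q,\dist,\meas_q)$; this is precisely the inequality underlying the metric Lévy--Gromov theorem and follows from the structure of the $L^1$-optimal-transport ray decomposition associated with $f=\chi_E-v$ (see \cite[Section 6]{CavallettiMondino17a}). For $\qq$-a.e.\ $q$, $\meas_q$ is a $\CD(N-1,N)$ density on an interval of length $\le D$ with $\meas_q(E)=v\in(0,1)$, so the one-dimensional isoperimetric comparison — whose model is the normalized density $(\sin t)^{N-1}$ on a length-$D$ subinterval of $[0,\pi]$, see \cite{BerardBessonGallot85, Milman15, CavallettiMondino17a} — gives $\mathsf{P}_{\meas_q}(E)\ge \mathcal{I}_{N,D}(v)$. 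Combining these facts with $\qq(Q)=1$ and then invoking \autoref{thm:improvedLevyGromov1D},
\begin{equation*}
\mathsf{P}(E)\ \ge\ \int_Q \mathcal{I}_{N,D}(v)\,\qq(dq)\ =\ \mathcal{I}_{N,D}(v)\ \ge\ C_{N,D}\,\mathcal{I}_N(v),
\end{equation*}
whence $\mathcal{I}_{(X,\dist,\meas)}(v)+\varepsilon\ge C_{N,D}\,\mathcal{I}_N(v)$; letting $\varepsilon\downarrow 0$ proves \eqref{eq:improvedLevyGromov}. The identification of $\mathcal{I}_N$ with the isoperimetric profile of $\mathbb{S}^N$ for integer $N$ is the classical Lévy--Gromov identity.

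The step I expect to be the main obstacle is the perimeter-localization inequality: it is not a purely soft statement, since one must verify that the codimension-one boundary measure of $E$ disintegrates — with at most a loss — along the transport rays generated by $\chi_E-v$, which uses both the essentially non-branching hypothesis and the fine geometry of the $L^1$-transport set, and is where the dimension-reduction mechanism actually enters. By comparison, the one-dimensional isoperimetric comparison on a $\CD(N-1,N)$ segment and the explicit asymptotics packaged in \autoref{thm:improvedLevyGromov1D} are comparatively routine.
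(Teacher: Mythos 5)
Your proof is correct and coincides with the route the paper takes: the paper states \autoref{thm:BBG} without proof, as a recalled result of B\'erard--Besson--Gallot, but its proof of the generalization \autoref{thm:improvedLevyGromov} (which contains \autoref{thm:BBG} as the smooth special case) is exactly your two-step argument --- the L\'evy--Gromov bound $\mathcal{I}_{(X,\dist,\meas)}(v)\ge\mathcal{I}_{N,D}(v)$, cited from \cite{CavallettiMondino17a, CavallettiMondino18}, followed by \autoref{thm:improvedLevyGromov1D}. The only difference is that you unpack the cited localization/perimeter-disintegration step rather than black-boxing it, and you correctly identify that step as the technically delicate part.
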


We  extend \autoref{thm:BBG} to the class of essentially non branching $\CD(N-1,N)$ metric measure spaces, $N>1$ any Real parameter. In view of \cite{CavallettiMondino17a, CavallettiMondino18} the result follows from the 1-dimensional  improved Levy-Gromov Inequality proved in \autoref{thm:improvedLevyGromov1D}.

\begin{theorem}[Berard-Besson-Gallot improved Levy-Gromov for $\CD(N-1,N)$ e.n.b. spaces]\label{thm:improvedLevyGromov}
Let $(X,\dist,\meas)$ be an essentially non branching $\CD(N-1,N)$ m.m.s.  with $\diam(X)\leq D$, for some $N>1,D\in (0,\pi]$. Then, for any $v\in (0,1)$, it holds
\begin{equation}\label{eq:imprlevgrom}
\frac{\mathcal{I}_{(X,\dist,\meas)}(v)}{\mathcal{I}_{N}(v)}\ge  \left(\frac{\int_0^{\frac{\pi}{2}}\left(\cos t\right)^{N-1}\di t}{\int_0^{\frac{D}{2}}\left(\cos t\right)^{N-1}\di t}\right)^{\frac{1}{N}}=:C_{N,D}\ge 1,
\end{equation}
where $\cI_{N}$ was  defined in    \eqref{eq:exprlower}.
\end{theorem}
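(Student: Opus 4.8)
The plan is to reduce the $N$-dimensional isoperimetric problem to the one-dimensional improved Lévy--Gromov inequality of \autoref{thm:improvedLevyGromov1D} via the localization technique, exactly along the scheme of \cite{CavallettiMondino17a, CavallettiMondino18}. Fix $v\in(0,1)$ and let $E\subset X$ be an arbitrary Borel set with $\mm(E)=v$ and $\mathsf{P}(E)<\infty$ (if no admissible set of finite perimeter exists there is nothing to prove). Consider $f:=\chi_E-v\in L^1(X,\mm)$; then $\int_X f\,\mm=0$, and since $(X,\sfd)$ is compact the integrability hypothesis of \autoref{T:localize} holds automatically. Applying the localization theorem to $f$ yields a partition $\{X_q\}_{q\in Q}$ of a full-measure subset of $X$, a quotient probability space $(Q,\mathcal{Q},\qq)$, and a strongly consistent $\CD(N-1,N)$ disintegration $\{\mm_q\}_{q\in Q}$ such that, for $\qq$-a.e.\ $q$, the ray $X_q$ is a geodesic of length $\le\diam(X)\le D$, one has $\mm_q=h_q\,\mathcal{H}^1\llcorner_{X_q}$ with $h_q\circ X_q$ a $\CD(N-1,N)$ density, and $\int_{X_q}f\,\mm_q=0$, i.e.\ $\mm_q(E\cap X_q)=v$.

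The next step is to invoke the disintegration of the perimeter: as established in \cite{CavallettiMondino17a, CavallettiMondino18} for essentially non-branching $\CD(K,N)$ spaces, one has
\[
\mathsf{P}(E)\ \ge\ \int_Q \mathsf{P}_{(X_q,\sfd,\mm_q)}(E\cap X_q)\,\qq(dq),
\]
where $\mathsf{P}_{(X_q,\sfd,\mm_q)}$ is the perimeter computed inside the one-dimensional metric measure space $(X_q,\sfd,\mm_q)$. For $\qq$-a.e.\ $q$ this is, by the mean-value constraint above and the definition of the isoperimetric profile, at least $\mathcal{I}_{(X_q,\sfd,\mm_q)}(v)$; and since $(X_q,\sfd,\mm_q)$ is a one-dimensional $\CD(N-1,N)$ space of diameter $\le D$, a direct computation on $\CD(N-1,N)$ densities supported on intervals of length $\le D$ — which is exactly the one-dimensional Lévy--Gromov comparison of \cite{CavallettiMondino17a} and which motivates the very definition of $\mathcal{I}_{N,D}$ in \eqref{eq:exprlower} — gives $\mathcal{I}_{(X_q,\sfd,\mm_q)}(v)\ge \mathcal{I}_{N,D}(v)$. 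Integrating over $Q$ and using $\qq(Q)=1$ yields $\mathsf{P}(E)\ge \mathcal{I}_{N,D}(v)$.

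Finally, by \autoref{thm:improvedLevyGromov1D} we have $\mathcal{I}_{N,D}(v)\ge C_{N,D}\,\mathcal{I}_N(v)$, hence $\mathsf{P}(E)\ge C_{N,D}\,\mathcal{I}_N(v)$; taking the infimum over all admissible $E$ and recalling the definition of $\mathcal{I}_{(X,\sfd,\mm)}$ gives \eqref{eq:imprlevgrom}. The only genuinely nontrivial ingredients are the measurable structure underlying the localization theorem together with the perimeter disintegration inequality and the one-dimensional reduction $\mathcal{I}_{(X_q,\sfd,\mm_q)}(v)\ge\mathcal{I}_{N,D}(v)$, and all of these are already available in \cite{CavallettiMondino17a, CavallettiMondino18}; the present statement is therefore obtained by feeding the sharpened one-dimensional constant $C_{N,D}$ of \autoref{thm:improvedLevyGromov1D} — now valid for every real $N>1$ — into that machinery. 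The main point requiring care is thus not a hard argument but the bookkeeping: matching the normalizations in the definition of $\mathcal{I}_{N,D}$ and checking that the quoted localization and perimeter-disintegration results are stated for e.n.b.\ $\CD(N-1,N)$ spaces with non-integer $N$.
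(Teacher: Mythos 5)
Your proposal is correct and follows the same route as the paper: the paper simply quotes the inequality $\mathcal{I}_{(X,\dist,\meas)}(v)\ge \mathcal{I}_{N,D}(v)$ as one of the main results of \cite{CavallettiMondino17a, CavallettiMondino18} and combines it with \autoref{thm:improvedLevyGromov1D}, whereas you unpack that same black box (localization of $\chi_E-v$, perimeter disintegration, one-dimensional comparison) before applying the identical one-dimensional lemma. No gap; the only difference is that you re-derive what the paper cites.
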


\begin{proof}
 One of the main results in \cite{CavallettiMondino17a, CavallettiMondino18} is that for $(X,\sfd,\mm)$ as in the assumptions of the theorem it holds
\begin{equation}\label{eq:IXdmIND}
\mathcal{I}_{(X,\dist,\meas)}(v)\ge \mathcal{I}_{N,D}(v),
\end{equation}
where $\mathcal{I}_{N,D}$ stands for the model isoperimetric profile defined in \eqref{eq:exprlower}. 
\\ The claimed \eqref{eq:imprlevgrom} follows by combining \eqref{eq:IXdmIND} with \autoref{thm:improvedLevyGromov1D}.
\end{proof}

It is also possible to obtain a quantitative spectral gap inequality for Neumann boundary conditions.
The analogous result in the case of smooth Riemannian manifolds was established in \cite[Theorem B]{Croke82} building upon a quantitative improvement of the Levy-Gromov inequality and on \cite{BerardMeyer82} (see also \cite[Corollary 17]{BerardBessonGallot85}).

\begin{theorem} [Improved spectral gap and quantitative Obata's Theorem for $\CD(N-1,N)$ e.n.b. spaces]\label{thm:improvedspectralNeumann}
Let $(X,\dist,\meas)$ be an essentially non branching $\CD(N-1,N)$ m.m.s.  with $\diam(X)\leq D$, for some $N>1,D\in (0,\pi]$.
Then 
\begin{equation}\label{eq:firstnonu}
\lambda^{1,2}_{(X,\dist,\meas)}\ge N C_{N,D}^2,
\end{equation}
where $C_{N,D}$ is given in \eqref{eq:imprlevgrom}. Moreover, there exists $C=C_{N}>0$ (more precisely one can choose $C_{N}=\bar{C} N$ where  $\bar{C}$ was defined in  \autoref{prop:asymptoticC}) such that 
\begin{equation*}
C_{N}(\pi-\diam(X))^N\le \lambda^{1,2}_{(X,\sfd,\mm)}-N.
\end{equation*}  
\end{theorem}

\begin{proof}
Thanks to \cite[Theorem 4.4]{CavallettiMondino17b} (see also \autoref{thm:improvedspectralNeumann1D}) we know that $\lambda^{1,2}_{(X,\dist,\meas)}\geq \lambda^{1,2}_{N,D}$, where $\lambda^{1,2}_{N,D}$  was defined in \eqref{eq:eigenvalueproblem}.

Let us briefly outline the argument since it will be relevant for addressing the quantitative inequality for the first eigenfunction later in the note.
By the very definition of $ \lambda^{1,2}_{(X,\sfd,\mm)}$ it suffices to prove that, for any $u\in\Lip(X)$ with $\int u \,\mm=0$ and $\int u^2 \, \mm=1$, it holds
\begin{equation*}
\delta(u):=\int_{X}\abs{\nabla u}^2\mm -N\ge C_{N}(\pi-\diam(X))^N.
\end{equation*}
To this aim, we perform the 1D-localization associated to the function $u$ which by assumption has null mean value  (this is analogous to the proof  of \cite[Theorem 4.4]{CavallettiMondino17b}; see  Section \ref{Ss:localization} for some basics about 1D-localization). We obtain
\begin{align*}
\int_{X}\abs{\nabla u}^2\mm-N\int_{X} u^2\, \mm\ge& \int_{Q}\left(\int_{X_q}\abs{u_q'}^2\mm_q-N\int_{X_q} u_q^2 \, \mm_q\right) \, \qq(dq)\\
\ge&\int_{Q}\left(\lambda^{1,2}_{N,\diam(X_q)}\int_{X_q} u_q^2 \,\mm_q-N\int_{X_q} u_q^2 \, \mm_q\right)\, \qq(dq)\\
\ge&\int_{Q}(\lambda^{1,2}_{N, \diam(X)}-N)\int_{X_q}u_q^2 \, \mm_q\, \qq(dq)=\lambda^{1,2}_{N,\diam(X)}-N.
\end{align*}

Taking into account \autoref{thm:improvedspectralNeumann1D}, we conclude that
\begin{equation*}
\delta(u)\ge\lambda^{1,2}_{\diam(X),N}-N\ge C_{N}(\pi-\diam(X))^{N}
\end{equation*}
and \eqref{eq:firstnonu} can be obtained in an analogous way.
\end{proof}

\begin{remark}
In \cite{JiangZhang16} the authors obtained a quantitative version of the estimate for the gap of the diameters in terms of the deficit in the spectral gap for $\RCD$ spaces  (see Remark 1.3 therein). Their estimate reads as follows: if $(X,\dist,\meas)$ is an $\RCD(N-1,N)$ space of diameter $D\le\pi$, then 
\begin{equation*}
\lambda^{1,2}_{(X,\sfd,\mm)}\ge \frac{N}{1-\cos^N(D/2)}.
\end{equation*}
\autoref{thm:improvedspectralNeumann} extends such quantitative control to essentially non-branching $\CD(N-1,N)$  spaces whose Sobolev space $W^{1,2}$ is a priori non-Hilbert (but just Banach, as for instance on Finsler manifolds).
\end{remark}
\subsection{Volume control}
The aim of this brief subsection is to prove that for a $\CD(N-1,N)$ m.m.s. with diameter close to $\pi$ we have a quantitative volume control for balls centred at extrema of long rays. The proof is inspired by \cite[Lemma 5.1]{Ohta07} where the case of maximal diameter $\pi$ is treated (see also \cite[Proposition 5.1]{CMM}).

\begin{proposition}\label{prop:volumeest}
Let $(X,\dist,\meas)$ be a m.m.s. satisfying $\CD(N-1,N)$ (actually ${\rm MCP}(N-1,N)$ is enough). Let $P_{N},P_{S}\in X$ be such that $\dist(P_{N},P_{S})=\pi-\delta$, for some $\delta\geq 0$. Then, for any $0<r<\pi-\delta$, it holds
\begin{equation}\label{eq:volumeestimate}
\meas_N([0,r])\le\meas(B_{r}(P_{N}))\le \meas_N([0,r])+\meas_N([r,r+\delta]),
\end{equation}
where we recall that $\meas_N=\frac{1}{\omega_N}\left(\sin t\right)^{N-1}\di t$ is the model measure on the interval $[0,\pi]$.

\end{proposition}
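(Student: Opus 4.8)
The plan is to use the 1D localization machinery applied to a carefully chosen function, combined with the structure of transport rays, in order to compare balls around $P_N$ (an endpoint of a "long" ray) with the model interval. The cleanest route, and the one suggested by the reference to \cite[Lemma 5.1]{Ohta07}, is actually to localize with respect to the signed distance function from $P_N$, i.e. apply \autoref{T:localize} (or rather the underlying $L^1$-optimal transport / needle decomposition) to a function whose positive and negative parts force the transport rays to emanate radially from $P_N$. Concretely, one takes the $1$-Lipschitz Kantorovich potential $\varphi = \dist_{P_N}$; its transport set $\mathcal T$ is decomposed into rays $\{X_q\}_{q\in Q}$, each of which is (a subinterval of) a minimizing geodesic issuing from $P_N$, equipped with a $\CD(N-1,N)$ density $h_q$, and $\mm = \int_Q h_q \haus^1\llcorner_{X_q}\, \qq(dq)$.

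\textbf{Lower bound.} Along each ray $X_q$, parametrized by arclength $t$ starting from its left endpoint $a_q$, the density $h_q$ is a $\CD(N-1,N)$ density on an interval; moreover the whole space has diameter $\le \pi$, and since $\dist(P_N,P_S)=\pi-\delta$ the ray through (a point near) $P_S$ has length $\ge \pi-\delta$, but more importantly Bishop--Gromov-type comparison on each one-dimensional factor gives $h_q(t+s)/h_q(t) \ge h_N(?)$. The key point for the lower bound $\meas(B_r(P_N))\ge \meas_N([0,r])$ is: $B_r(P_N)\cap X_q$ contains exactly the initial segment $\{t : \dist(P_N,X_q(t)) < r\}$ of $X_q$, which because $X_q$ is a geodesic from $P_N$ is precisely $[a_q, a_q+r)$ (intersected with the ray). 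By the $\CD(N-1,N)$ density estimate \autoref{prop:hCDN-1N} (or directly the sharp one-dimensional Bishop--Gromov inequality for $\CD(N-1,N)$ densities, i.e. \cite[Theorem 2.3]{sturm:II} in dimension one), normalizing $h_q$ to a probability density on its domain $[a_q, b_q]$ with $b_q - a_q \le \pi$, one gets $\int_{a_q}^{a_q+r} h_q \ge \int_0^r h_N = \meas_N([0,r])$ — this is the monotonicity of $r \mapsto \int_0^r h_q / \int_0^{b_q-a_q} h_q$ compared against the model, using that the density is "as concentrated near the pole as the model allows." Integrating over $q \in Q$ against $\qq$ and using that the $Z$-part (degenerate rays) has the point $P_N$ possibly in it but contributes nonnegatively, yields $\meas(B_r(P_N)) \ge \meas_N([0,r])$.

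\textbf{Upper bound.} For the reverse inequality one uses that each ray $X_q$ has length at most $\pi - \delta'$ for some $\delta' \ge 0$ controlled by $\delta$; more precisely, since $\dist(P_N, x)\le \diam(X)\le \pi$ and the ray from $P_N$ through $x$ extended maximally cannot be longer than... hmm, actually the relevant bound is $b_q - a_q \le \pi$, but we need the sharper statement that the far endpoint of each ray is within distance $\delta$ of being "antipodal," which is where the hypothesis $\dist(P_N,P_S)=\pi-\delta$ is used only to position $P_S$; for a general ray we only know length $\le \pi$. Re-examining: the upper bound $\meas(B_r(P_N)) \le \meas_N([0,r]) + \meas_N([r,r+\delta])$ should come from: on each ray, $\int_{a_q}^{a_q+r} h_q \le$ (model integral over $[0,r]$ for a density on an interval of length $\ge \pi - \delta$, by the other side of the density comparison \eqref{eq:distdens1}), and the slack $\meas_N([r,r+\delta])$ is exactly the $\epsilon=\delta$ correction term appearing in \autoref{prop:hCDN-1N}. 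So one invokes \eqref{eq:distdens1} with $\epsilon = \pi - (b_q-a_q) \le \delta$ (using monotonicity of $\meas_N([r,r+\epsilon])$ in $\epsilon$), integrates over $Q$, and accounts for the degenerate part $Z$ (which near $P_N$ can only add a ball of radius $r$ in a set of rays of length $0$, contributing... actually $Z$ could carry positive mass, but any point of $Z$ at distance $<r$ from $P_N$ — one handles this by noting $Z\cap B_r(P_N)$ can be absorbed, or by choosing the localization function so that $P_N$ is a pole and $Z$ is $\mm$-null near it; in \cite{CMM} Proposition 5.1 this is dealt with directly).

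\textbf{The main obstacle} I expect is the careful bookkeeping of the one-dimensional comparison: one must verify that the normalized ray densities $h_q$, which live on intervals $[a_q,b_q]$ of varying length $\le \pi$ and are $\CD(N-1,N)$ densities but \emph{not} necessarily attaining their maximum at the endpoint, nonetheless satisfy $\int_{a_q}^{a_q+r} h_q \ge \meas_N([0,r])$ — i.e. that the initial-segment mass is minimized by the model. This is a genuine one-dimensional lemma (monotonicity of $F_q(r):=\int_0^r h_q / \|h_q\|_1$ versus the model CDF), provable via \eqref{eq:logh''-K} or \eqref{eq:distdens1}, but it requires knowing that the ray is parametrized so that $t=0$ corresponds to $P_N$, which is guaranteed by the localization being with respect to $\dist_{P_N}$. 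Once that lemma is in place, both inequalities in \eqref{eq:volumeestimate} follow by integrating over $Q$ and the proof is short; so I would state and prove that one-dimensional comparison first, then assemble.
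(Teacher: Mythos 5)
There is a genuine gap in your upper bound, and it is exactly at the point you yourself flag with ``Re-examining''. Your plan is to decompose $\mm$ into needles emanating from $P_{N}$ and to apply the density comparison \eqref{eq:distdens1} ray by ray with $\epsilon_q=\pi-|X_q|\le\delta$. But the hypothesis $\dist(P_{N},P_{S})=\pi-\delta$ only produces \emph{one} nearly maximal geodesic; a generic ray issuing from $P_{N}$ can be arbitrarily short, so $\epsilon_q$ is in general only bounded by $\pi$, and the per-ray correction term $\meas_N([r,r+\epsilon_q])$ does not sum to $\meas_N([r,r+\delta])$. No ray-by-ray argument can close this, because the information about $P_{S}$ must be used \emph{globally}. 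The paper does this in three lines: by the triangle inequality $B_{r}(P_{N})\cap B_{\pi-r-\delta}(P_{S})=\emptyset$, Bishop--Gromov applied at $P_{S}$ gives $\meas(B_{\pi-r-\delta}(P_{S}))\ge\meas_N([0,\pi-r-\delta])=\meas_N([r+\delta,\pi])$ (using the symmetry of $\sin^{N-1}$), and since $\mm(X)=1$ one concludes $\meas(B_{r}(P_{N}))\le 1-\meas_N([r+\delta,\pi])=\meas_N([0,r])+\meas_N([r,r+\delta])$.

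Your lower bound is also heavier than necessary and rests on an unproven one-dimensional lemma (that the normalized initial-segment mass of each needle dominates the model CDF), which you correctly identify as the main obstacle but do not establish; as stated it is delicate because the needle densities need not be monotone from the pole. In fact no localization is needed: the lower bound $\meas(B_{r}(P_{N}))\ge\meas_N([0,r])$ is a direct consequence of the Bishop--Gromov inequality under $\CD(N-1,N)$ (or ${\rm MCP}(N-1,N)$), comparing $B_{r}(P_{N})$ with $B_{\pi}(P_{N})=X$ of full measure $1$; it holds at \emph{every} point of $X$ and makes no use of $P_{S}$. So the correct proof uses only two applications of Bishop--Gromov (at $P_N$ and at $P_S$) plus the disjointness of the two balls; the needle decomposition with respect to $\dist_{P_{N}}$ is both unnecessary and, for the upper bound, insufficient.
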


\begin{proof}
First of all, since $\dist(P_{N}, P_{S})=\pi-\delta$, it holds $B_{r}(P_{N})\cap B_{\pi-r-\delta}(P_{S})=\emptyset$.\\
Thanks to the Bishop-Gromov inequality implied by the  $\CD(N-1,N)$ condition (actually ${\rm MCP}(N-1,N)$ is enough), and using that $\mm(X)=1$, we have
\begin{equation}
\meas(B_{r}(P_{N}))\ge\meas_N([0,r]),\quad \meas(B_{\pi-r-\delta}(P_{S}))\ge\meas_N([0,\pi-r-\delta])=\meas_N([r+\delta,\pi]),
\end{equation}
where the last equality follows from the symmetries of the density $\sin^{N-1}(\cdot)$. Hence we can compute 
\begin{align*}
\meas(B_{r}(P_{N}))\leq&~ 1- \meas(B_{\pi-r-\delta}(P_{S})) \leq  1- \meas_N([0,\pi-r-\delta])\\
=&~ \meas_N([0,r])+\meas_N([r,r+\delta]).
\end{align*}
The claimed conclusion \eqref{eq:volumeestimate} follows.
\end{proof}

\section{Quantitative Obata's Theorem on almost optimal functions}\label{sec:QuantObataFuct}
Consider $u \in \Lip(X)$ such that 
$$
\int_{X} u \mm = 0, \qquad  \int_{X} u^{2}\mm = 1; 
$$
denote its spectral gap  deficit with 
\begin{equation}\label{eq:defdelta(u)Sec5}
\delta (u) : = \int_{X}|\nabla u|^{2} \mm - N.
\end{equation}
Since we are interested in quantitative estimates when the spectral gap deficit is small, it is enough to consider the case $\delta(u)\leq 1$.
Recall that $N$ is the first eigenvalue for the Neumann Laplacian 
for the 1-dimensional metric measure space $([0,\pi], |\cdot|, \meas_{N})$
where $\meas_{N} : = \sin^{N-1}(t)dt/\omega_{N}$ and $\omega_{N}$ is the normalizing constant. In particular 
$$
N = (N+1)\int_{(0,\pi)} \sin^{2}(t)\mm_{N}(dt),
$$
since, as we already observed, $\int_{(0,\pi)}\cos^2(t)\mm_N(\di t)=1/(N+1)$.

Consider the localization associated to the zero-mean function $u$ (see \autoref{Ss:localization} for the background and for the relevant bibliography): 
$$
\meas\llcorner_{\mathcal{T}} = \int_{Q} \meas_{q} \, \qq(dq),
$$
where $\mathcal{T}$ is the transport set associated to the $L^{1}$-optimal transport problem between $u^{+}\meas$ and $u^{-}\meas$, 
the positive and the negative part of $u$, respectively. 
It follows that 
\begin{equation}\label{eq:startingPoint}
\int_{Q} \int_{X_{q}}|u|^{2} \mm_{q} \, \qq(dq)  = \int_{\mathcal{T}} |u|^{2} \mm = \int_{X} |u|^{2} \mm = 1, \qquad   \int_{X \setminus \mathcal{T}} |\nabla u |^{2} \mm = 0. 
\end{equation}
Setting $u_{q}:=u|_{X_{q}}$ and  $|c_{q}| : = \left(\int_{X_{q}} |u_{q}|^{2} \mm_{q}\right)^{1/2}$ (for the sign of $c_{q}$, see before \eqref{eq:1Duqcosdq}), observe that \eqref{eq:startingPoint} gives
\begin{equation}\label{intcq2=1}
\int_{Q} c_{q}^{2} \,  \qq(dq)=1.
\end{equation}
Moreover, the integral constraint $\int_{X} u\,\mm=0$ localizes to almost every ray:
\begin{equation}\label{eq:intuq=0}
\int_{X_{q}} u_{q} \, \mm_{q}=0.
\end{equation}
Since almost each ray $(X_{q}, \sfd|_{X_{q}}, \mm_{q})$ is a 1-dimensional $\CD(N-1,N)$ space,  the Lichnerowicz spectral gap gives
\begin{equation}\label{eq:1dimspectralgap}
\int_{X_{q}} |u_{q}'|^{2} \, \mm_{q}\geq N c_{q}^{2},
\end{equation}
where  $|u_{q}'|(x)$ denotes the local Lipschitz constant of  $u_{q}: (X_{q}, \sfd|_{X_{q}})\to \R$ at $x\in X_{q}$. It is clear that, for each $x\in X_{q}\subset X$,  $|u_{q}'|(x)$  is bounded by the local Lipschitz constant $|\nabla u|(x)$ of $u: (X, \sfd)\to \R$:
\begin{equation}\label{uq'leqnablau}
|u_{q}'|(x)\leq |\nabla u|(x), \quad \forall x\in X_{q}, \, \qq\text{-a.e. }q\in Q.
\end{equation}
With a slight abuse of notation, in order to keep the formulas short,  in the following we will often identify $\qq$ and $\qq\llcorner_{ \{q\in Q: \,  c_{q}> 0\}}$.
Localizing the spectral gap deficit using \eqref{uq'leqnablau} gives

\begin{align}
\delta(u) = &~\int_{X}|\nabla u|^{2} \mm- N 
\geq ~ \int_{Q} \left( \int_{X_{q}}\frac{|u_{q}'|^{2}} {c_{q}^{2}} \mm_{q}  \right) \,
c_{q}^{2}\,\qq(dq) -N \nonumber \\
= &~
\int_{Q} \left[ \int_{X_{q}} \left( \frac{|u_{q}'|^{2}}{c_{q}^{2}} - N \right)
\mm_{q}  \right]c_{q}^{2} \,\qq(dq)  \\ 
= &~  \int_{Q}\delta(u_{q})c_{q}^{2}\, \qq(dq), \label{eq:deltageqdeltaq}
\end{align}
where we set
$$
\delta(u_{q}) : = \int_{X_{q}} \left( \frac{|u'_{q}|^{2}}{c_{q}^{2}} - N \right)\,\mm_{q},
$$
the one-dimensional spectral gap deficit of $u_{q}$.
From now on, in order to keep notation short, we will write $\delta$ for $\delta(u)$. 
Let $\beta\in (0,1)$ be a real parameter to be optimised later in the proof and denote the set of ``long rays'' by 
$$
Q_{\ell} : = \{ q \in Q \colon    \delta(u_{q}) \leq \delta^{\beta}  \text{ and } c_{q}>0\}.
$$
It follows  from \eqref{eq:deltageqdeltaq}, Chebyshev's inequality and  \eqref{intcq2=1} that 
\begin{equation}\label{eq:measQlong}
\int_{Q \setminus Q_{\ell}} c_{q}^{2}\,\qq(dq)  \leq  \delta^{1-\beta}, \qquad \int_{Q_{\ell}}c_{q}^{2}\,\qq(dq) \geq 1 -  \delta^{1-\beta}.
\end{equation}
Hence we can use \autoref{thm:improvedspectralNeumann1D} 
to deduce that for all $q\in Q_{\ell}$,
\begin{equation}\label{eq:longrays}
(\pi - |X_{q}|)^{N} \leq C_{N} \delta^{\beta},
\end{equation}
where  $|X_{q}|$ denotes the length of the ray $X_{q}$. 
Being the preimage of a measurable function, $Q_{\ell}$ is a  measurable subset of $Q$.
Adopting the  notation $R(E):=\cup_{q\in E} X_{q}$, so that $R(E)$ is the span of the rays corresponding to equivalence classes in $E$, we claim that
\begin{equation}\label{eq:intgradbadset}
\int_{X\setminus R(Q_{\ell})}\abs{\nabla u}^2\mm \leq \   (N+1) \delta^{1-\beta}.
\end{equation}
Indeed \eqref{uq'leqnablau}, \eqref{eq:1dimspectralgap}  and \eqref{eq:measQlong} yield
\begin{equation*}
\int_{R(Q_{\ell})}\abs{\nabla u}^2\mm
\ge\int_{Q_{\ell}}\int_{X_q}\abs{u_q'}^2 \mm_q\, \qq(dq)
\ge N\int_{Q_{\ell}}c_q^2\, \qq(dq) \ge N(1-\delta^{1-\beta}).
\end{equation*}
The claim \eqref{eq:intgradbadset} follows by  combining the last estimate with
\begin{equation*}
\int_{X\setminus R(Q_{\ell})}\abs{\nabla u}^2\mm+\int_{R(Q_{\ell})}\abs{\nabla u}^2\mm=\int_{X}\abs{\nabla u}^2\mm\le N+\delta.
\end{equation*} 
For each $q\in Q$, we denote with $a(X_{q})$ (resp. $b(X_{q})$) the initial (resp. final) point of the  ray $X_{q}$.
\smallskip

Throughout this last section we will often make the identification between the ray $X_q$ and the interval $(0,|X_q|)$.

\begin{proposition}\label{prop:PNPS}
There exists 
a distinguished $\bar q \in Q_{\ell}$ having initial point 
$P_{N}$ and final point $P_{S}$ such that
\begin{equation}\label{eq:neartoNS2}
\dist(P_{N}, a(X_{q}))\leq C(N) \delta^{\beta/N}, \qquad  \dist(P_{S}, b(X_{q})) \leq C(N) \delta^{\beta/N},\quad  \forall q \in Q_{\ell}.
\end{equation}
\end{proposition}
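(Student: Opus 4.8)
The strategy is to take $P_N,P_S$ to be the endpoints of one fixed long ray and then compare every other long ray with it, using two ingredients: the $\sfd$-cyclical monotonicity of the set $\Gamma$ underlying the localization, and the Bishop--Gromov volume comparison implied by $\CD(N-1,N)$ (as in the proof of \autoref{prop:volumeest}).

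\emph{Step 1: choice of the poles.} By \eqref{eq:measQlong} the set $Q_\ell$ has positive $\qq$-measure, so I would fix $\bar q\in Q_\ell$ for which all the $\qq$-a.e.\ conclusions of the localization theorem hold, and set $P_N:=a(X_{\bar q})$, $P_S:=b(X_{\bar q})$. Since $X_{\bar q}$ is a geodesic parametrised by arclength along which $\varphi$ decreases at unit speed, one has $(P_N,P_S)\in\Gamma$ and $\sfd(P_N,P_S)=|X_{\bar q}|$; by \eqref{eq:longrays} this gives $\sfd(P_N,P_S)\ge\pi-\varepsilon_0$ with $\varepsilon_0:=(C_N\delta^\beta)^{1/N}\le C(N)\delta^{\beta/N}$. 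We may assume $\varepsilon_0$ as small as needed, the remaining range of $\delta$ being absorbed into the final constant since $\sfd\le\pi$ on $X$.

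\emph{Step 2: cross-distance bounds via cyclical monotonicity.} Fix $q\in Q_\ell$ and write $x:=a(X_q)$, $y:=b(X_q)$, so that $(x,y)\in\Gamma$ and $\sfd(x,y)=|X_q|\ge\pi-\varepsilon_0$. Applying $\sfd$-cyclical monotonicity of $\Gamma$ to the pairs $(P_N,P_S)$ and $(x,y)$,
\[
\sfd(P_N,P_S)+\sfd(x,y)\le \sfd(P_N,y)+\sfd(x,P_S),
\]
and using $\diam(X)\le\pi$ (Bonnet--Myers), so that $\sfd(P_N,y),\sfd(x,P_S)\le\pi$, one deduces $\sfd(P_N,y)\ge\pi-2\varepsilon_0$ and $\sfd(x,P_S)\ge\pi-2\varepsilon_0$. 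These bounds alone do \emph{not} place $x$ near $P_N$: in a general geodesic space of diameter $\pi$ three points can be pairwise $\pi$-apart. This is exactly where the curvature--dimension hypothesis must enter, and it is the crux of the argument.

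\emph{Step 3: volume packing and conclusion.} Set $d:=\sfd(P_N,x)$ and $\rho:=\pi-2\varepsilon_0-d/2$. By Step 2 and $\sfd(P_N,P_S)\ge\pi-\varepsilon_0$, the balls $B_{d/2}(P_N)$, $B_{d/2}(x)$, $B_\rho(P_S)$ are pairwise disjoint, so Bishop--Gromov (with $\meas(X)=1$ and $\diam(X)\le\pi$, exactly as in \autoref{prop:volumeest}) gives $2\,\meas_N([0,d/2])+\meas_N([0,\rho])\le\meas(X)=1$. Since $\meas_N([0,\rho])=1-\meas_N([0,\pi-\rho])=1-\meas_N([0,\,d/2+2\varepsilon_0])$ by the symmetry of $\sin^{N-1}$ about $\pi/2$, this rearranges to
\[
2\,\meas_N([0,d/2])\le \meas_N([0,\,d/2+2\varepsilon_0]).
\]
Because the right side exceeds the left by at most $2\varepsilon_0/\omega_N$, this first forces $d$ small; then the expansion $\meas_N([0,s])=\tfrac{s^N}{N\omega_N}(1+o_s(1))$ as $s\to0$ upgrades the inequality to $(1+o(1))\,2^{1/N}\,\tfrac d2\le \tfrac d2+2\varepsilon_0$, hence $\sfd(P_N,a(X_q))=d\le C\varepsilon_0/(2^{1/N}-1)\le C(N)\delta^{\beta/N}$. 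The bound $\sfd(P_S,b(X_q))\le C(N)\delta^{\beta/N}$ follows symmetrically, packing $B_{d'/2}(P_S)$, $B_{d'/2}(y)$, $B_{\pi-2\varepsilon_0-d'/2}(P_N)$ with $d':=\sfd(P_S,y)$. The only delicate point is this last estimate: using merely the Lipschitz bound $\meas_N'\le1/\omega_N$ instead of the exact vanishing rate $h_N(t)\sim t^{N-1}/\omega_N$ at the endpoints would give only the weaker exponent $\delta^{\beta/N^2}$, so one must keep the precise power law.
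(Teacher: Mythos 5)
Your proof is correct and follows the same route as the paper: the poles are the endpoints of a fixed long ray, and the $\sfd$-cyclical monotonicity inequality $\sfd(P_N,P_S)+\sfd(x,y)\le \sfd(P_N,y)+\sfd(x,P_S)$ combined with \eqref{eq:longrays} and $\diam(X)\le\pi$ yields exactly the paper's intermediate bounds $\sfd(P_N,y),\,\sfd(x,P_S)\ge \pi-C(N)\delta^{\beta/N}$. The only difference is that the paper then invokes \cite[Proposition 5.1]{CMM} as a black box for the quantitative uniqueness of antipodes, whereas you prove that step directly by the Bishop--Gromov packing of the three disjoint balls (in the spirit of \autoref{prop:volumeest}); your packing argument, including the use of the precise vanishing rate of $\sin^{N-1}$ needed to get the exponent $\beta/N$ rather than $\beta/N^2$, is sound and is essentially how the cited lemma is proved.
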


\begin{proof}
Fix any $\bar q \in Q_{\ell}$ and set $P_{N} : = a(X_{\bar q})$, 
$P_{S} : = b(X_{\bar q})$.
By $\dist$-cyclical monotonicity of the transport set ${\mathcal T}$, for any other $q \in Q_{\ell}$ it holds
$$
2\pi - \dist(a(X_{q}),b(X_{q}) ) - \dist(P_{N},P_{S}) \geq 
2\pi - \dist(a(X_{q}), P_{S} ) - \dist(b(X_{q}), P_{N} ), 
$$
which we rewrite as 
$$
\pi - |X_{q}| + \pi - |X_{\bar q}| \geq 
\pi -\dist(a(X_{q}), P_{S} ) + \pi - \dist(b(X_{q}), P_{N} ).
$$
Combining the last estimate with \eqref{eq:longrays} gives
$$
2C_{N} \delta^{\beta/N} \geq \pi -\dist(a(X_{q}), P_{S} ) + \pi - \dist(b(X_{q}), P_{N} ).
$$
Finally by \cite[Proposition 5.1]{CMM} we deduce 
the existence of a constant, depending only on the dimension $N$, such that 
$$
\dist(a(X_{q}), P_{N}) \leq  C(N) \delta^{\beta/N}, \quad  \dist(b(X_{q}), P_{S}) \leq C(N) \delta^{\beta/N},
$$
and the claim follows.
\end{proof}

From now on, for every $q\in Q_{\ell}$ choose the sign of $c_{q}$ so that 
\begin{align*}
\left\| \frac{u_{q}}{c_{q}} \right. & - \left.\sqrt{N+1}\cos(\cdot) \right\|_{L^{2}(X_{q},\mm_{q})}\\
&~ = \min \left\{
\left\| \frac{u_{q}}{|c_{q}|} + \sqrt{N+1} \cos(\cdot) \right\|_{L^{2}(X_{q},\mm_{q})}, \left\| \frac{u_{q}}{|c_{q}|} - \sqrt{N+1}\cos(\cdot) \right\|_{L^{2}(X_{q},\mm_{q})} \right\}.
\end{align*}
From \autoref{thm:mainthm1d} we  obtain that for all $ q\in Q_{\ell}$ it holds:
\begin{equation}\label{eq:1Duqcosdq}
\left\| \frac{u_{q}}{c_{q}} - \sqrt{N+1}\cos(\cdot) \right\|_{L^{2}(X_{q},\mm_{q})} \leq C(N)  \delta^{\beta \min\{1/2, 1/N\}}.
\end{equation}
The goal of the next section is to globalise  estimate \eqref{eq:1Duqcosdq} to the whole space $X$.

The sought bound will be obtained through two intermediate steps: firstly, in \autoref{prop:varcqlongrays}, we control  the variance of the map $q\mapsto c_q$ w.r.t. the measure $\qq$ on the set of long rays $Q_{\ell}$. 
Then, in \autoref{prop:massoflongraysbound}, we estimate $(1-\qq(Q_{\ell}))$ in terms of a power of the deficit.


Below we briefly present the strategy of the proof.  In order to fix the ideas, we discuss the heuristics in the rigid case of zero deficit. Actually in the case of zero deficit there is a more streamlined argument (the assumption that $u$ is Lipschitz, combined with the forth bullet below, gives immediately that $q\mapsto c_{q}$ is constant), however the point here is to present a strategy which generalises to the non-rigid case of non-zero deficit.

In the case where $\delta(u)=0$, the results of the previous sections give the following conclusions:
\begin{itemize}
\item Almost all the transport rays have length $\pi$. Moreover: they start from a common point $P_{N}$, with $u(P_{N})>0$,  and end in a common point $P_{S}$, with $u(P_{S})<0$;
\item $\meas(B_r(P_{N}))=\meas_N([0,r])$, for any $r\in[0,\pi]$;
\item For $\qq$-a.e. $q\in Q$, it holds that $\meas_q=\mm_N$ is the model measure for the $\CD(N-1,N)$ condition;
\item For $\qq$-a.e. $q\in Q$, it holds that $u_q(\cdot)=c_q\cos(\dist(P_{N},\cdot))$.
\end{itemize}
Our aim is to prove that $\qq(Q)=1$ and that $c_q=1$  for $\qq$-a.e. $q\in Q$.  The basic idea is to apply the Poincaré inequality to balls centred at $P_{N}$ and having radii converging to $0$.\\

Observe that we can compute 
\begin{equation}\label{eq:fintBrPNu}
\fint_{B_r(P_{N})}u \,\mm=\frac{1}{\mm_N([0,r])}\int_{Q}\int_0^rc_q\cos(t)\meas_N(dt)=\left(\int_Qc_q \,\qq\right)\fint_0^r\cos(t)\mm_N(dt).
\end{equation} 
Moreover, recalling that $u=0$ $\mm$-a.e. outside of the transport set, we have
\begin{align}
&\fint_{B_r(P_{N})}\abs{u-\fint_{B_r(P_{N})} u \,\mm}^2\meas \nonumber \\
\overset{ \eqref{eq:fintBrPNu}}{=}&\left(1-\qq(Q)\right)\left(\fint_{B_r(P_{N})}u \, \mm\right)^2+\int_{Q}\fint_0^r\abs{c_q\cos(t)-\int_Qc_q \,\qq(dq)\fint_0^{r}\cos(t)\,\mm_N(dt)}^2\,\mm_N(dt) \,\qq(dq) \nonumber  \\
\sim \;&  \left(1-\qq(Q)\right)\left(\int_Q c_q \, \qq(dq)\right)^2+\int_Q\abs{c_q-\int_Qc_q \,\qq(dq)}^2\qq(dq)\qquad\text{as $r\to 0$}, \label{eq:intBru-fint}
\end{align}
where in the last step we relied on the asymptotic $\cos(t)=1+o(t)$ as $t\to 0$.
Eventually we can compute 
\begin{equation*}
\fint_{B_{2r}(P_{N})}\abs{\nabla u}^2\mm=\int_{Q}c^2_q \,\qq( dq)\fint_0^{2r}\sin^2(t)\, \mm_N (dt)=\fint_{0}^{2r}\sin^2(t)\,\mm_N (dt)\sim r^2\qquad\text{as $r\to 0$},
\end{equation*}
where in the last step we relied on the asymptotic $\sin(t)=t+o(t)$ as $t\to 0$.

An application of the Poincaré inequality, in the asymptotic regime $r\downarrow 0$, yields that
\begin{equation}
\int_Q\abs{c_q-\int_Q c_q \, \qq(dq)}^2\qq(dq)=0,
\end{equation}
which implies both the conclusions $\qq(Q)=1$ and $q\mapsto c_q$ constant $\qq$-a.e.. Due to the constraint $\int_Qc_q^2\qq(dq)=1$ and the fact that $u(P_{N})>0$, we also get that $c_q=1$ $\qq$-a.e., as we claimed.  
\\

A second heuristic motivation of the fact that the oscillation of the map $q\mapsto c_q$ is controlled by (a power of) the deficit is that ``the gradient of $u$ is almost aligned along the rays''  in a quantitative  $L^{2}$-sense, suggesting that $u$ ``should not oscillate much in the direction orthogonal to the rays''. Note that in the current framework of $\CD(K,N)$ spaces there is no scalar product and the set $Q$ is far from regular, this is the reason why we cannot directly implement this heuristic strategy. However let us make precise the fact that ``the gradient of $u$ is almost aligned along the rays''  in a quantitative  $L^{2}$-sense, since this will be used in the arguments below.
\begin{align}
0 & \overset{\eqref{uq'leqnablau}}{\leq} \int_{Q}\left( \int_{X_{q}} |\nabla u|^{2}- |u_{q}'|^{2} \mm_{q} \right) \qq(dq) = \int_{X}  |\nabla u|^{2} \mm -  \int_{Q}\left( \int_{X_{q}}  |u_{q}'|^{2} \mm_{q} \right) \qq(dq), \quad \text{use \eqref{eq:defdelta(u)Sec5},\eqref{eq:1dimspectralgap},} \nonumber \\
& \quad  \leq N+\delta -  N \int_{Q} c_{q}^{2}   \qq(dq) \overset{\eqref{intcq2=1}}{=} \delta. \label{eq:nablau2-uq'2delta}
\end{align}

The proofs of \autoref{prop:varcqlongrays} and \autoref{prop:massoflongraysbound} below are based on the idea we just presented, although being quite technical since one has to handle all the various error terms occurring in the non rigid case $\delta(u)>0$.

\subsection{Control on the variance}\label{Ss:variance}


\begin{proposition}\label{prop:varcqlongrays}
The following estimate holds:
\begin{equation}\label{eq:varcqlongrays}
\int_{Q_{\ell}}\left|c_q-\fint_{Q_{\ell}}c_q\, \qq(dq)\right|^2\, \qq(dq) \leq C(N)\left(\delta^{4\gamma/N}+ \delta^{1-\beta-\gamma+(2\gamma/N)} + \delta^{(\beta-\gamma) \min\{2/N, 1\}}\right),
\end{equation}
for any $0<\beta<1$ and for any $0<\gamma<\min\{\beta, 1-\beta\}$.
\end{proposition}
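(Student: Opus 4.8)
The plan is to apply the weak local $L^2$-Poincar\'e inequality \eqref{eq:LocPoincqq} on the ball $B_r(P_N)$ centered at the ``north pole'' $P_N$ of \autoref{prop:PNPS}, with the specific radius $r:=\delta^{\gamma/N}$, and to estimate both sides through the ray decomposition. The hypothesis $\gamma<\beta$ is crucial from the start: it forces $r\gg\delta^{\beta/N}$ as $\delta\to0$, and since by \eqref{eq:longrays} and \autoref{prop:PNPS} every long ray $X_q$ ($q\in Q_\ell$) has $\pi-|X_q|\le C\delta^{\beta/N}$ and starting point within $C\delta^{\beta/N}$ of $P_N$, this guarantees that $X_q\cap B_r(P_N)$ is sandwiched between $X_q([0,r-C\delta^{\beta/N}])$ and $X_q([0,r+C\delta^{\beta/N}])$, and that the improved one-dimensional estimate \eqref{eq:impr0r2} may be applied on each $X_q$ at radius (a constant times) $r$ with deficit $\delta(u_q)\le\delta^\beta$. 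By \autoref{cor:estdens} — in particular \eqref{eq:distancedensitiesImproved}, which applies because $\pi-|X_q|<r/10$ — the density $h_q$ differs from $h_N$ by at most $Cr^{N-2}\delta^{\beta/N}$ on $[0,2r]$, whence $\mm_q(X_q\cap B_r(P_N))=(1+o(1))\,\mm_N([0,r])$ with $\mm_N([0,r])\asymp r^N$; and by \autoref{prop:volumeest} (applied with $\dist(P_N,P_S)=|X_{\bar q}|\ge\pi-C\delta^{\beta/N}$) one gets $\mm(B_{10r}(P_N))=(1+o(1))\,\mm_N([0,10r])\asymp r^N$, the overshoot $\mm_N([r,r+C\delta^{\beta/N}])\asymp r^{N-1}\delta^{\beta/N}$ being negligible precisely because $\gamma<\beta$.

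\textbf{Lower bound for the left-hand side.} Set $m:=\fint_{B_r(P_N)}u\,\mm$, a quantity which will never actually be estimated. Dropping the non-negative contributions of $B_r(P_N)\cap Z$ (on which $u\equiv0$ $\mm$-a.e.) and of the bad rays, and then using on each long ray the elementary bound $|u_q-m|^2\ge\frac12(c_q\sqrt{N+1}-m)^2-|u_q-c_q\sqrt{N+1}|^2$ together with \eqref{eq:impr0r2} applied to $u_q/c_q$ (so that $\|u_q-c_q\sqrt{N+1}\cos(\cdot)\|_{L^2([0,2r],\mm_q)}^2\le C c_q^2(\delta^\beta+\delta^{\gamma+\beta\min\{1,2/N\}})$) and $1-\cos t=O(r^2)$ on $[0,2r]$, one obtains
\begin{equation*}
\fint_{B_r(P_N)}\Big|u-\fint_{B_r(P_N)}u\Big|^2\mm\ \ge\ c(N)\int_{Q_\ell}\big(c_q\sqrt{N+1}-m\big)^2\,\qq(dq)\ -\ C(N)\big(\delta^{\beta-\gamma}+\delta^{\beta\min\{1,2/N\}}+\delta^{4\gamma/N}\big).
\end{equation*}
Here the density and ``ball versus interval'' discrepancies enter only as multiplicative $(1+o(1))$ factors in front of the non-negative leading term (and hence are harmless), while the genuinely additive errors come from squaring the right-hand side of \eqref{eq:impr0r2} and summing against $c_q^2$ via \eqref{intcq2=1}. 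Since $c_q\sqrt{N+1}$ has $\qq\llcorner_{Q_\ell}$-mean equal to $\sqrt{N+1}\,\bar c_\ell$ with $\bar c_\ell:=\fint_{Q_\ell}c_q\,\qq$, and the mean minimizes the quadratic deviation, we have $\int_{Q_\ell}(c_q\sqrt{N+1}-m)^2\,\qq\ge(N+1)\int_{Q_\ell}(c_q-\bar c_\ell)^2\,\qq$, so the left-hand side of the Poincar\'e inequality already controls the desired variance.

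\textbf{Upper bound for the right-hand side and conclusion.} Split $|\nabla u|^2=|u_q'|^2+(|\nabla u|^2-|u_q'|^2)$ along each ray; by \eqref{eq:nablau2-uq'2delta} the second summand integrates to at most $\delta$ globally. On long rays, \eqref{eq:impr0r2} controls $\|u_q'+c_q\sqrt{N+1}\sin(\cdot)\|_{L^2([0,10r],\mm_q)}$, giving $\int_{X_q\cap B_{10r}}|u_q'|^2\,\mm_q\le C c_q^2 r^{N+2}+Cc_q^2(\delta^\beta+r^{N}\delta^{\beta\min\{1,2/N\}})$ (using $\sin^2t=O(t^2)$ and $\mm_N([0,10r])\asymp r^N$); on bad rays, $\int_{X_q}|u_q'|^2\,\mm_q=c_q^2(\delta(u_q)+N)$, so by \eqref{eq:deltageqdeltaq} and \eqref{eq:measQlong} one has $\int_{Q\setminus Q_\ell}\int_{X_q}|u_q'|^2\,\mm_q\,\qq\le\delta+N\delta^{1-\beta}\le C\delta^{1-\beta}$. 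Summing against $\qq$ (with $\int_{Q_\ell}c_q^2\,\qq\le1$) and dividing by $\mm(B_{10r}(P_N))\asymp r^N=\delta^\gamma$ yields $\fint_{B_{10r}(P_N)}|\nabla u|^2\,\mm\le C(N)(r^2+\delta^{1-\beta-\gamma}+\delta^{\beta-\gamma}+\delta^{\beta\min\{1,2/N\}})$. Feeding this into \eqref{eq:LocPoincqq} (which contributes the factor $r=\delta^{\gamma/N}$) and combining with the lower bound above gives
\begin{equation*}
\int_{Q_\ell}\big(c_q-\bar c_\ell\big)^2\,\qq\ \le\ C(N)\big(\delta^{3\gamma/N}+\delta^{1-\beta-\gamma+\gamma/N}+\delta^{\gamma/N+\beta-\gamma}+\delta^{\gamma/N+\beta\min\{1,2/N\}}+\delta^{\beta-\gamma}+\delta^{\beta\min\{1,2/N\}}+\delta^{4\gamma/N}\big).
\end{equation*}
It then only remains to observe that, using nothing more than $0<\gamma<\beta<1$, each of the five extra exponents is bounded below by $3\gamma/N$ (true for $4\gamma/N$) or by $(\beta-\gamma)\min\{1,2/N\}$ (true for $\beta-\gamma$, for $\beta\min\{1,2/N\}$, and for these plus $\gamma/N$), so that the estimate collapses exactly to \eqref{eq:varcqlongrays}.

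\textbf{Main obstacle.} The delicate point is the contribution of the bad rays $Q\setminus Q_\ell$: they may carry $L^2$-mass and Dirichlet energy of order $\delta^{1-\beta}$, which exceeds $\mm(B_r(P_N))\asymp\delta^\gamma$ precisely in the range $\gamma>1-\beta$ that the hypotheses permit, so one cannot hope to make their contribution to $\fint_{B_r}u$ or $\fint_{B_r}|u|^2$ small. The resolution is structural rather than quantitative: on the Poincar\'e left-hand side their contribution is non-negative and is simply discarded (this is why $\fint_{B_r}u$ need not be estimated, the mean-minimization trick above handling the comparison with $\bar c_\ell$ for free), whereas on the right-hand side their contribution to the \emph{unnormalized} Dirichlet integral is at most $C\delta^{1-\beta}$, which after division by $\mm(B_{10r})\asymp\delta^\gamma$ and multiplication by the weak-Poincar\'e factor $r=\delta^{\gamma/N}$ becomes exactly the term $\delta^{1-\beta-\gamma+\gamma/N}$ — positive thanks to the standing hypothesis $\gamma<N(1-\beta)/(N-1)$. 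A secondary technical nuisance, which must be tracked carefully throughout, is to keep the density- and ball-approximation discrepancies as $(1+o(1))$ multiplicative corrections of the positive principal terms rather than letting them degrade into additive errors of size $\delta^{(\beta-\gamma)/N}$, which would be too large.
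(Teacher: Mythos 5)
Your proposal is correct and follows the same overall skeleton as the paper's proof: apply the weak $2$-$2$ Poincar\'e inequality \eqref{eq:LocPoincqq} on $B_r(P_N)$ with $r=\delta^{\gamma/N}$, lower-bound the left-hand side by the variance of $q\mapsto c_q$ via the ray decomposition and the one-dimensional closeness to $c_q\sqrt{N+1}\cos(\cdot)$ from \eqref{eq:impr0r2}, and upper-bound the Dirichlet energy on $B_{10r}(P_N)$ exactly as in \eqref{eq:boundgradball}; your bookkeeping of the error exponents reproduces \eqref{eq:varcqlongrays}. The one genuine structural difference is in the treatment of the mean: the paper devotes its Steps 2--3 to showing that $\fint_{Q_\ell\times[0,\bar r]}u\,\mm$ is quantitatively close to $\sqrt{N+1}\fint_{Q_\ell}c_q\,\qq(dq)$ (estimate \eqref{eq:estmean}) and then runs a chain of triangle inequalities in Step 4, whereas you discard the value of $\fint_{B_r(P_N)}u\,\mm$ entirely by first replacing the weights $\mm_q([0,\bar r])$ with a uniform constant multiple of $\mm_N([0,\bar r])$ (legitimate, since the leading term is non-negative and \eqref{eq:distancedensitiesImproved} makes the replacement a $(1+o(1))$ factor) and then invoking the variational characterization of the mean, $\int_{Q_\ell}(c_q\sqrt{N+1}-m)^2\,\qq\ge(N+1)\int_{Q_\ell}(c_q-\bar c_\ell)^2\,\qq$. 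This shortcut is sound and removes the most computational part of the paper's argument; the trade-off is that the explicit mean estimate \eqref{eq:estmean} is reused later in the paper (in the proof of \autoref{prop:massoflongraysbound}), so in the global architecture it cannot be dispensed with entirely, but for the present proposition your route is a clean and correct simplification.
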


\begin{proof}
In order to bound the variance of $q\mapsto c_q$ on $Q_{\ell}$ we wish to prove that it can be controlled by an integral depending on the variation of the function $u$ on a small ball $B_r(P_N)$.
Next we will appeal on the fact that in the rigid case the $L^2$-norm squared of the gradient of $u$ on $B_r(P_{N})$ is comparable with $r^{N+2}$ and, at least heuristically, this has to be the case also when dealing with almost rigidity.   
Some intermediate steps are devoted to reduce ourselves to the case where the function $u$ coincides with $c_q\cos(\cdot)$ when restricted to any long ray $X_q$.
\\In order to slightly shorten the notation, we will write $C$  in place of $C(N)$ to denote a dimensional constant.

\smallskip
{\bf Step 1.}\\
We will set $r=\delta^{\gamma/N}$ for a suitable $\gamma\in (0, \beta)$.
First of all, notice that the triangle inequality and  \eqref{eq:neartoNS2} yield
\begin{equation}\label{eq:estNSlr}
[0, r - C\delta^{\beta/N}] \subset X_{q} \cap B_{r}(P_{N}) \subset [0, r + C\delta^{\beta/N}],
\end{equation}
for any $q\in Q_{\ell}$,
where we have identified $[0, r \pm C\delta^{\beta/N}]$ with the set 
$$
\{z \in X_{q} \colon \dist(z,a(X_{q})) \leq  r \pm C\delta^{\beta/N} \}.
$$
The  minimality of the mean combined with the inclusion \eqref{eq:estNSlr}  and with the weak local 2-2 Poincar\'e inequality \eqref{eq:LocPoincqq} gives 
\begin{align}\label{E:poincare1}
\int_{Q_{\ell}\times[0,  r - C\delta^{\beta/N}]}\Big|u - 
\fint_{Q_{\ell}\times [0,  r - C\delta^{\beta/N}]} u \, \mm\Big|^{2} 
\mm  \nonumber
\leq & \int_{B_{r}(P_{N})}\Big|u - \fint_{B_{r}(P_{N})} u\Big|^{2} \mm\\
\leq & C r^2\int_{B_{10r}(P_{N})} |\nabla u|^{2} \mm.
\end{align}

{\bf Step 2.} \\
Next we will obtain a more explicit expression of 
$\fint_{Q\times[0,  r - C\delta^{\beta/N}]} u \mm$.\\
Recall that we will often tacitly identify the ray $X_q$ with the interval $(0,|X_q|)$.

Using \autoref{thm:mainthm1d},  \autoref{cor:estdens} and that $\delta_{q} \leq \delta^{\beta}$ 
for $q \in Q_{\ell}$, we estimate 

\begin{align}
\Big| \int_{Q_{\ell}} \int_{[0,r]} &~u \mm_{q}\,\qq(dq)
- \sqrt{N+1}\int_{Q_{\ell}} \int_{[0,r]} c_{q} \cos( \cdot ) \mm_{q}\,\qq(dq) \Big|  \nonumber  \\
\leq  &~
\int_{Q_{\ell}} |c_{q}|
\int_{[0,r]}\left|\frac{u}{c_{q}} - \sqrt{N+1}\cos(\cdot)
\right|\mm_{q}\,\qq(dq)  \nonumber \\
\leq &~ \int_{Q_{\ell}} |c_{q} |
\sqrt{\meas_{q}([0,r])}\left\|\frac{u}{c_{q}} - \sqrt{N+1}\cos(\cdot)
\right\|_{L^{2}([0,r],\meas_{q})}\,\qq(dq)  \nonumber \\
\leq  &~  C \,  r^{N/2} \left( r^{N/2} \delta^{\beta \min\{1/2,1/N\} }+ \delta^{\beta/2}\right) \int_{Q_{\ell}}|c_{q}|\,\qq(dq).  \label{eq:Step2.1}
\end{align}
Also, using  \autoref{cor:estintegrals}, it holds
\begin{align}
\Big| \int_{Q_{\ell}}&~ \int_{[0,r]} c_{q} \cos(\cdot ) \mm_{q}\,\qq(dq)
-  \int_{Q_{\ell}} \int_{[0,r]} c_{q} \cos(\cdot ) \mm_{N}\,\qq(dq) \Big|  \nonumber  \\
\leq &~ C\delta^{\beta/N} r^{N-1} \int_{Q_{\ell}} |c_{q}|\,\qq(dq)
\label{eq:Step2.2}
\end{align}
With an analogous estimate involving  \autoref{cor:estintegrals}, we also obtain
\begin{equation}\label{eq:Step2.3}
|\meas(Q_{\ell} \times [0,r]) - \qq(Q_{\ell})\meas_{N}([0,r])| \leq  C\qq (Q_{\ell}) r^{N-1}\delta^{\beta/N}.
\end{equation}
The combination of \eqref{eq:Step2.1}, \eqref{eq:Step2.2} and  \eqref{eq:Step2.3}, setting $\bar{r}:=r-C\delta^{\beta/N}$, yields
\begin{align}
& \left| \fint_{Q_{\ell}\times[0,\bar{r}]} u \mm 
-\frac{\sqrt{N+1}\int_{Q_{\ell} \times [0,\bar{r}]} c_{q}  \cos(\cdot)  \meas_{N}  \,\qq(dq)}  
{\qq(Q_{\ell})(\meas_{N}([0,\bar{r}]) - C r^{N-1}  \delta^{\beta/N})}\right|  
\nonumber \\
& \qquad  \qquad \leq   \frac{ C (\int_{Q_{\ell}}|c_{q}|\, \qq(dq))( r^{N}  \delta^{\beta \min\{1/2,1/N\} } +r^{N/2} \delta^{\beta/2}+ r^{N-1}  \delta^{\beta/N})} 
{\qq(Q_{\ell})(\meas_{N}([0,\bar{r}]) - C r^{N-1}\delta^{\beta/N})} . \label{eq:Step2.Final}
\end{align}
{\bf Step 3.}\\
In this step we estimate the order in $\delta$ of the right hand side of  \eqref{eq:Step2.Final} and  choose $r$ as
\begin{equation}\label{eq:ChoicerdeltagammaStep3}
 r = \delta^{\gamma/N}, \qquad \text{with} \quad  \gamma \in (0, \beta).
\end{equation}
Approximating the cosine with its first order Taylor expansion near to the origin in \eqref{eq:Step2.Final}, we have
$$
\fint_{Q_{\ell}\times[0,  \bar{r}]} u \mm = 
\frac{\int_{Q_{\ell}} c_{q}\,\qq(dq) +(\int_{Q_{\ell}}|c_{q}|\, \qq(dq)) \,  O ( \delta^{(\beta-\gamma) \min\{1/2, 1/N\}})} {\frac{1}{\sqrt{N+1}}\qq(Q_{\ell})}. $$
Since by Cauchy-Schwartz inequality and  \eqref{intcq2=1} it holds $\left(\fint_{Q_{\ell}}c_q \qq(dq) \right) ^2 \leq \fint_{Q_{\ell}}c_q^{2} \qq(dq) \leq 1/\qq(Q_{\ell}) $, the last estimate can be rewritten as 
\begin{align}\label{eq:estmean}
\left|\fint_{Q_{\ell}\times[0,\bar{r}]} u \mm 
- \sqrt{N+1}\fint_{Q_{\ell}} c_{q}\,\qq(dq)\right|^2
& \leq    \frac{C}{  \qq(Q_{\ell})} \delta^{(\beta-\gamma)\min\{1, 2/N\} }.
\end{align}

\textbf{Step 4.}\\
The aim of this step is to eventually gain \eqref{eq:varcqlongrays}.
We first need the following intermediate inequality, where we assume that $r\gg\delta^{\beta/N}$ is a free parameter, that we will set later:
\begin{align}
\int_{Q_{\ell}} &~ \int_{[0,r]}\Big| u - \sqrt{N+1}c_{q}\Big|^{2} \mm_{q}\,\qq(dq)  \nonumber
\\ \nonumber
&~ \leq 2\int_{Q_{\ell}} \int_{[0,r]}
\Big|u - \sqrt{N+1}c_{q}\cos(\cdot)\Big|^{2}
\mm_{q}\,\qq(dq)  
\\ \nonumber 
&~ \qquad + 
2 \int_{Q_{\ell}} \int_{[0,r]}
\left( \sqrt{N+1}|c_{q}||\cos(\cdot) - 1| \right)^{2}
\mm_{q}\,\qq(dq) 
\\ \nonumber
&~  \leq C \delta^{\beta \min\{1, 2/N\}} r^{N} \int_{Q_{\ell}} c_{q}^{2}
\, \qq(dq)+ C \delta^{\beta} + C r^{4} \int_{Q_{\ell}} c_{q}^{2}  \, \meas_{q}([0,r])\,\qq(dq) ,  \quad \text{from \eqref{eq:impr0r2}}
\\ \nonumber
&\leq  C    \delta^{\beta \min\{1, 2/N\}} r^{N}  + C \delta^{\beta} +
C r^{4} \int_{Q_{\ell}} c_{q}^{2} \, (\meas_{N}([0,r]) +  C r^{N-1} \delta^{\beta/N})\,\qq(dq),\text{ from \eqref{eq:longrays}+\eqref{eq:distancedensitiesImproved} }
\\ 
&~ \leq C  \delta^{\beta \min\{1, 2/N\}} r^{N} +
C r^{4}\meas_{N}([0,r]) \int_{Q_{\ell}} c_{q}^{2} \,\qq(dq)+C \delta^{\beta}\le Cr^{N}( \delta^{\beta \min\{1, 2/N\}}+r^{4})+ C\delta^{\beta}. \label{E:intermediate} 
\end{align}
In particular, the previous inequality holds true 
plugging $\bar{r}:=r - C\delta^{\beta/N}$  in place of $r$, and $r=\delta^{\gamma/N}$ is as in the previous \textbf{Step 3}.
We deduce 
\begin{align}
\nonumber
\meas_{N}&~([0,\bar{r}])\left(N+1\right)\int_{Q_{\ell}}\left|c_q-\fint_{Q_{\ell}}c_q\, \qq(dq)\right|^2\, \qq(dq) \\ \nonumber
\leq &~
\left(N+1\right)\int_{Q_{\ell}}\left|c_q-\fint_{Q_{\ell}}c_q\, \qq(dq)\right|^2(\meas_{q}([0,\bar{r}]) +  C r^{N-1}\delta^{\beta/N})\, \qq(dq) 
\\ \nonumber
\leq &~  C\delta^{\beta/N}r^{N-1}+
\left(N+1\right)\int_{Q_{\ell}}\left|c_q-\fint_{Q_{\ell}}c_q\, \qq(dq)\right|^2\meas_{q}([0,\bar{r}])\, \qq(dq) \\ \nonumber
\leq &~   C\delta^{\beta/N}r^{N-1}+
2\int_{Q_{\ell}}  \int_{[0,\bar{r}]}
\left| u - \sqrt{N+1}c_{q}\right|^{2} \mm_{q}\,\qq(dq)\\ \nonumber
&~ \qquad + 2\int_{Q_{\ell}}
\int_{[0,\bar{r}]}\left|u-\fint_{Q_{\ell}}\sqrt{N+1}c_q\, \qq(dq)\right|^2\mm_{q}\, \qq(dq) \\ \nonumber
\leq &~   C\delta^{\beta/N}r^{N-1}+
2\int_{Q_{\ell}}  \int_{[0,\bar{r}]}\left| u - \sqrt{N+1}c_{q}\right|^{2} \mm_{q}\,\qq(dq)+  
4\int_{Q_{\ell}\times[0,\bar{r}]}\left|u-\fint_{Q_{\ell}\times[0,\bar{r}]}u\mm\right|^2\mm \\ \nonumber
&~ \qquad +
4 \int_{Q_{\ell}}\int_{[0,\bar{r}]}\left|\fint_{Q_{\ell}\times[0,\bar{r}]}u\mm-\sqrt{N+1}\fint_{Q_{\ell}}c_q\, \qq(dq)\right|^2\mm_{q}
\, \qq(dq). 
\end{align}
Now use  \eqref{E:poincare1}, \eqref{eq:estmean}, \eqref{eq:ChoicerdeltagammaStep3}, \eqref{E:intermediate} to continue
the chain of inequalities
\begin{align}
\leq &~  
  C \delta^{\gamma} \left( \delta^{(\beta-\gamma) \min\{1, 2/N\}} +\delta^{4\gamma/N} \right) + C r^2 \int_{B_{10r}(P_{N})} |\nabla u|^{2} \mm.
\label{E:final}
\end{align}
\noindent
Next we wish to bound the term $\int_{B_{10r}(P_{N})}\abs{\nabla u}^2\mm$. To this aim we observe that
\begin{align}
\int_{B_{10r}(P_N)}\abs{\nabla u}^2\mm \le& \int_{X\setminus R(Q_{\ell})}\abs{\nabla u}^2\mm+\int_{Q_{\ell}}\int_0^{10r+C\delta^{\beta/N}}\abs{u'_q}^2\mm_q\qq(d\qq)+\delta,  \quad \text{from \eqref{eq:nablau2-uq'2delta} } \nonumber \\
\le& C(\delta^{1-\beta}+ \delta^{\beta}+r^{N}\delta^{\beta \min\{2/N, 1\}}+\delta^{\beta/N}r^{N-1}r^2)  \nonumber \\
& \quad + C \int_0^{10r+C\delta^{\beta/N}}\sin(\cdot)^2\mm_N, \quad \text{from \eqref{eq:intgradbadset}, \eqref{eq:impr0r2}, \eqref{eq:distancedensitiesImproved}} \nonumber \\
\le & C\left(\delta^{1-\beta}+ \delta^{\beta}+r^{N}\left( \delta^{\beta \min\{2/N, 1\}} +r^2\right)\right).\label{eq:boundgradball}
\end{align}
Combine now \eqref{E:final}  with \eqref{eq:boundgradball} and recall that $r=\delta^{\gamma/N}$, for $0<\gamma<\min\{\beta,1-\beta\}$ to get 
\begin{align*}
\delta^{\gamma}&\int_{Q_{\ell}}\left|c_q-\fint_{Q_{\ell}}c_q\, \qq(dq)\right|^2\, \qq(dq)  \leq  C \delta^{\gamma}\left(\delta^{4\gamma/N}+ \delta^{1-\beta-\gamma+(2\gamma/N)} + \delta^{(\beta-\gamma) \min\{ 1, 2/N\}}\right)
\end{align*}
which gives the desired estimate \eqref{eq:varcqlongrays}. \end{proof}

\subsection{Control of the measure of long rays}

Following \autoref{prop:varcqlongrays},
we set  
\begin{equation}\label{E:mean}
\bar c:=\fint_{Q_{\ell}}c_q\, \qq(dq).
\end{equation}
Next we proceed proving that $\qq(Q_{\ell})$ is quantitatively close to $1$ up to an error of the order of a suitable power of the deficit. 

\begin{proposition}\label{prop:massoflongraysbound}
The following estimate holds:
\begin{equation}\label{eq:boundalpha}
(1-\qq(Q_{\ell}))^2\leq  C(N) \left(\delta^{4\gamma/N}+\delta^{(\beta-\gamma)/N}+\delta^{1-\beta-\gamma}  \right),
\end{equation}
for any $0<\beta<1$ and for any $0<\gamma<\min\{\beta,1-\beta\}$.  
\end{proposition}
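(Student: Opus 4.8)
The plan is to run the same Poincaré–on–a–small–ball argument as in \autoref{prop:varcqlongrays}, now reading off the \emph{mass} rather than the variance of the fibres. Fix $r=\delta^{\gamma/N}$ and let $P_{N}$ be the distinguished point of \autoref{prop:PNPS}. Since $P_{N}=a(X_{\bar q})$ and $P_{S}=b(X_{\bar q})$ with $\bar q\in Q_{\ell}$, estimate \eqref{eq:longrays} gives $\dist(P_{N},P_{S})=\abs{X_{\bar q}}\ge\pi-C\delta^{\beta/N}$, and the assumption $\gamma<\beta$ ensures $C\delta^{\beta/N}<r/10$ for $\delta$ small, so the improved density and integral estimates \autoref{cor:estdens}--\autoref{cor:estintegrals} are available on $[0,r]$. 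Applying the weak local Poincaré inequality \eqref{eq:LocPoincqq} with $p=2$ on $B_{r}(P_{N})$, writing $m:=\fint_{B_{r}(P_{N})}u\,\mm$ and discarding the ratio $\mm(B_{r})/\mm(B_{10r})\le1$ via Bishop--Gromov, we get
\[
\int_{B_{r}(P_{N})}\abs{u-m}^{2}\mm\le Cr\int_{B_{10r}(P_{N})}\abs{\nabla u}^{2}\mm\le C\big(\delta^{1-\beta}r+\delta^{\beta}r+r^{N+1}\delta^{\beta\min\{2/N,1\}}+r^{N+3}\big),
\]
the last bound being exactly \eqref{eq:boundgradball}.

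For the left-hand side I would expand $\int_{B_{r}}\abs{u-m}^{2}\mm=\int_{B_{r}}u^{2}\mm-\mm(B_{r})^{-1}\big(\int_{B_{r}}u\,\mm\big)^{2}$. Set $\mu:=\mm_{N}([0,r])$ (so $\mu\simeq r^{N}$ up to a constant depending only on $N$), $A:=\int_{Q_{\ell}}c_{q}^{2}\,\qq$ and $B:=\int_{Q_{\ell}}c_{q}\,\qq$. The volume estimate \autoref{prop:volumeest}, applied with $\delta':=\pi-\dist(P_{N},P_{S})\le C\delta^{\beta/N}$, yields $\mu\le\mm(B_{r}(P_{N}))\le\mu+Cr^{N-1}\delta^{\beta/N}$. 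Using $u\equiv0$ $\mm$-a.e.\ on $X\setminus\mathcal T$, the inclusions \eqref{eq:neartoNS2}, the density estimates, and the one-dimensional $W^{1,2}$-closeness \eqref{eq:impr0r2} along the long rays (with $\int_{Q_{\ell}}\abs{c_{q}}\,\qq\le1$, by Cauchy--Schwarz and \eqref{intcq2=1}), one obtains $\int_{B_{r}}u^{2}\mm=(N+1)\mu A+\mathcal E_{1}$ and $\int_{B_{r}}u\,\mm=\sqrt{N+1}\,\mu B+\mathcal E_{2}$, where $\mathcal E_{1}$ is a sum of harmless small powers of $\delta$ and $\mathcal E_{2}=\mathcal E_{2}^{\mathrm{long}}+\mathcal E_{2}^{\mathrm{sh}}$, $\mathcal E_{2}^{\mathrm{sh}}$ being the contribution of $B_{r}\cap R(Q\setminus Q_{\ell})$. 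The crucial estimate is, by \eqref{eq:measQlong} and Cauchy--Schwarz,
\[
\abs{\mathcal E_{2}^{\mathrm{sh}}}^{2}\le\Big(\int_{Q\setminus Q_{\ell}}c_{q}^{2}\,\qq\Big)\int_{Q\setminus Q_{\ell}}\mm_{q}(B_{r})\,\qq\le\delta^{1-\beta}\big(\mu\,(1-\qq(Q_{\ell}))+Cr^{N-1}\delta^{\beta/N}\big),
\]
where $\int_{Q\setminus Q_{\ell}}\mm_{q}(B_{r})\,\qq=\mm(B_{r})-\mm(B_{r}\cap R(Q_{\ell}))-\mm(B_{r}\cap Z)\le\mu\,(1-\qq(Q_{\ell}))+Cr^{N-1}\delta^{\beta/N}$, using \autoref{prop:volumeest} together with $\mm_{q}(B_{r})\ge\mu-Cr^{N-1}\delta^{\beta/N}$ for $q\in Q_{\ell}$.

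Combining, $\int_{B_{r}}\abs{u-m}^{2}\mm\ge(N+1)\mu(A-B^{2})-\abs{\mathcal E_{1}}-2\sqrt{N+1}\,\abs B\,\abs{\mathcal E_{2}}-\mathcal E_{2}^{2}/\mu$ (using $\mm(B_{r})\ge\mu$). Cauchy--Schwarz gives $B^{2}\le\qq(Q_{\ell})A$, whence $A-B^{2}\ge A\,(1-\qq(Q_{\ell}))\ge\tfrac12(1-\qq(Q_{\ell}))$ for $\delta$ small, since $A\ge1-\delta^{1-\beta}$ by \eqref{intcq2=1} and \eqref{eq:measQlong}; also $\abs B\le1$. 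Inserting the Poincaré estimate and dividing by $\mu$, the terms carrying $\mathcal E_{2}^{\mathrm{sh}}$ are reabsorbed by Young's inequality: the factor $\sqrt{1-\qq(Q_{\ell})}$ hidden in $\abs{\mathcal E_{2}^{\mathrm{sh}}}$ lets one move a fixed fraction of $(1-\qq(Q_{\ell}))$ back to the left. What remains are pure powers of $\delta$, and a bookkeeping of the exponents, using $r^{N}=\delta^{\gamma}$ and $\gamma<\beta$, $\gamma<1-\beta$, gives $1-\qq(Q_{\ell})\le C\big(\delta^{\gamma/N}+\delta^{(\beta-\gamma)/(2N)}+\delta^{(1-\beta-\gamma)/2}\big)$; squaring yields \eqref{eq:boundalpha}.

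The main obstacle is precisely the short-ray term $\mathcal E_{2}^{\mathrm{sh}}$. A crude bound $\abs{\mathcal E_{2}^{\mathrm{sh}}}\le\int_{Q\setminus Q_{\ell}}\abs{c_{q}}\,\qq\le\delta^{(1-\beta)/2}$ would, after dividing by $\mu=\delta^{\gamma}$, only give $\delta^{(1-\beta)/2-\gamma}$, whose exponent is negative --- hence useless --- when $\gamma\in[(1-\beta)/2,1-\beta)$; it is the extraction of the extra factor $\sqrt{1-\qq(Q_{\ell})}$ from $\mm(B_{r}\setminus R(Q_{\ell}))\simeq\mu\,(1-\qq(Q_{\ell}))$ (itself coming from \autoref{prop:volumeest} and $\mm_{q}(B_{r})\simeq\mu$ on long rays) that repairs this, and this is morally why the square appears on the left of \eqref{eq:boundalpha}. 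A secondary, purely mechanical, point is checking that every remaining error carries a $\delta$-exponent dominating one of $2\gamma/N$, $(\beta-\gamma)/N$, $1-\beta-\gamma$ after the squaring --- the same kind of verification already performed in \autoref{prop:varcqlongrays}.
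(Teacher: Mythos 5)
Your proposal is correct, and while it shares the paper's skeleton (Poincar\'e inequality on $B_r(P_N)$ with $r=\delta^{\gamma/N}$, the gradient bound \eqref{eq:boundgradball}, the volume estimate of \autoref{prop:volumeest}, the density estimates and the one-dimensional closeness \eqref{eq:impr0r2}), the mechanism by which you extract the mass defect is genuinely different and, in my view, cleaner. The paper first replaces $c_q$ by $\bar c$ using \autoref{prop:varcqlongrays}, Taylor-expands the cosine, and reads off $\mu\,\bar c^{\,2}\qq(Q_\ell)(1-\qq(Q_\ell))^2$ from the discrepancy between the ball average $\approx\sqrt{N+1}\,\bar c\,\qq(Q_\ell)$ and the value $\approx\sqrt{N+1}\,\bar c$ on long rays; it then needs \autoref{prop:varcqlongrays} a second time to bound $\bar c^{\,2}\qq(Q_\ell)$ from below. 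You instead use the variance identity together with the Cauchy--Schwarz gap $B^2\le \qq(Q_\ell)A$, so that $A-B^2\ge A(1-\qq(Q_\ell))\ge\tfrac12(1-\qq(Q_\ell))$ follows from \eqref{eq:measQlong} alone; this bypasses \autoref{prop:varcqlongrays} entirely (indeed $A-B^2=\qq(Q_\ell)\fint_{Q_\ell}|c_q-\bar c|^2\qq+\bar c^{\,2}\qq(Q_\ell)(1-\qq(Q_\ell))$, and you simply discard the nonnegative variance part) and yields a bound \emph{linear} in $1-\qq(Q_\ell)$, which is formally stronger than \eqref{eq:boundalpha} and gives it upon squaring. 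Your exponent bookkeeping checks out against the three target exponents. One remark: the ``main obstacle'' you flag around $\mathcal E_2^{\mathrm{sh}}$ is not actually an obstacle. Applying Cauchy--Schwarz on the set $B_r\setminus R(Q_\ell)$ with the crude measure bound $\mm(B_r\setminus R(Q_\ell))\le\mm(B_r)\le C r^N$ (exactly as in \eqref{eq:secdterm}) already gives $|\mathcal E_2^{\mathrm{sh}}|/\mu\le C\delta^{(1-\beta)/2}r^{-N/2}=C\delta^{(1-\beta-\gamma)/2}$, which is one of the admissible terms; the problematic exponent $\delta^{(1-\beta)/2-\gamma}$ only arises if one forgets the factor $\sqrt{\mm(B_r\setminus R(Q_\ell))}\lesssim r^{N/2}$. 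Your refined bound extracting $\sqrt{1-\qq(Q_\ell)}$ and absorbing via Young is also correct, but it is not needed, and the square in \eqref{eq:boundalpha} comes in your argument only from squaring the linear estimate at the very end, not from that absorption.
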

\begin{proof}
In order to slightly shorten the notation, we will write $C$ in place of $C(N)$ to denote constants depending only on $N$. Moreover, we will continue to tacitly identify the ray $X_q$ with the interval $(0,|X_q|)$.
We achieve \eqref{eq:boundalpha} through three intermediate steps.

\textbf{Step 1.} \\
Aim of this first step is to prove that, for $r=\delta^{\gamma/N}$, $\gamma \in (0,\min\{\beta,1-\beta\})$, 
letting $\bar{r}:=r-C\delta^{\beta/N}$, it holds
\begin{align}\label{E:step1}
\nonumber
\left(N+1\right)\int_{Q_{\ell}}&\int_{[0,\bar{r}]}\left|c_q\cos(\cdot)-\bar c \,\qq(Q_{\ell})\fint_{[0,\bar{r}]}\cos(\cdot)\mm_N\right|^2\mm_N\, \qq(dq)\\ 
\le& \int_{B_{r}(P_N)}\left|u-\fint_{B_{r}(P_N)} u\mm\right|^2\mm+ C\left(\delta^{\gamma+(\beta-\gamma)/N}+\delta^{1-\beta}\right).
\end{align}
Arguing as in the first steps of the proof of \autoref{prop:varcqlongrays}, we estimate
\begin{align}
\int_{Q_{\ell}}&\int_0^{\bar{r}}\left|\sqrt{N+1}c_q\cos(\cdot)-\fint_{B_{r}(P_N)}u\mm\right|^2\mm_N\, \qq(dq) \nonumber \\
\le  &~\int_{Q_{\ell}} \int_{0}^{\bar{r}} \left|\sqrt{N+1}c_q\cos(\cdot)-\fint_{B_{r}(P_N)}u\mm\right|^2\mm_q\, \qq(dq)+ C \delta^{\beta/N}r^{N-1} 
\;  \text{ from \eqref{eq:distancedensitiesImproved}, \eqref{eq:longrays}} \nonumber \\
\le&~  2\int_{Q_{\ell}}\int_{0}^{\bar{r}}
\Big|\sqrt{N+1}c_q\cos(\cdot)-u\Big|^2\mm_q\, \qq(dq)+  C \delta^{\beta/N}r^{N-1} + 2\int_{Q_{\ell}}\int_{0}^{\bar{r}}\Big|u-\fint_{B_{r}(P_N)}u\mm\Big|^2\mm_q\, \qq(dq)  \nonumber\\
\le &~ 2\int_{Q_{\ell}} c_{q}^{2} \;   \Big\|\frac{u}{c_{q}} - \sqrt{N+1}\cos(\cdot) \Big\|_{L^{2}([0,\bar{r}],\meas_{q})}^{2}\,\qq(dq) +  C \delta^{\beta/N}r^{N-1}  \nonumber\\
&~ + 2\int_{Q_{\ell}}\int_{0}^{\bar{r}}\Big|u-\fint_{B_{r}(P_N)}u\mm\Big|^2\mm_q\, \qq(dq) \mm  \nonumber \\
\le &~    2\int_{B_{r}(P_N)\cap R(Q_{\ell})}\Big|u-\fint_{B_{r}(P_N)}u\mm\Big|^2\mm +  C \delta^{\beta/N}r^{N-1}  \text{ from \eqref{eq:impr0r2}, \eqref{eq:estNSlr}}. \label{eq:Prop2Step1.1}
\end{align}
In order to achieve \eqref{E:step1}, having in mind to argue by triangle inequality, we are left to bound 
\begin{equation}\label{eq:estmeans}
\meas_N([0,\bar{r}])\qq(Q_{\ell})\left|\fint_{B_{r}(P_N)}u\mm-\sqrt{N+1}\bar c\qq(Q_{\ell})\fint_{[0, \bar{r}]}\cos(\cdot)\mm_N\right|^2.
\end{equation}

\noindent
We start by observing that
\begin{align}\label{eq:Prop2Step1.2}
\left|\int _{B_{r}(P_N)}u\mm\right.-&~\left.\sqrt{N+1}\bar{c}\qq(Q_{\ell})\int_0^{r}\cos(\cdot)\mm_N\right|  \nonumber \\
&~ \le
\left|\int_{B_{r}(P_N)\cap R(Q_{\ell})}u\mm-\sqrt{N+1}\bar{c}\qq(Q_{\ell})\int_0^r\cos(\cdot)\mm_N\right|+\left|\int_{B_{r}(P_N)\setminus R(Q_{\ell})}u\mm\right|.
\end{align}
We first treat the second term of the right hand-side.
\\From \eqref{eq:measQlong} we know that $\int_{X\setminus R(Q_{\ell})}u^2\mm\le \delta^{1-\beta}$; an application of H\"older's inequality and \eqref{eq:distancedensitiesImproved} yields 
\begin{equation}\label{eq:secdterm}
\int_{B_{r}(P_N)\setminus R(Q_{\ell})}\abs{u}\mm\le \delta^{(1-\beta)/2} \sqrt{\meas(B_{r}(P_N) \setminus R(Q_{\ell}))}\le C\delta^{(1-\beta)/2}r^{N/2}.
\end{equation}
We estimate the first term in the right hand side of \eqref{eq:Prop2Step1.2} by reducing to  \eqref{eq:Step2.1} in the second step of the proof of \autoref{prop:varcqlongrays}:
\begin{align}
\Big|\int_{B_{r}(P_N)\cap R(Q_{\ell})}u\mm& -\sqrt{N+1}\bar{c}\qq(Q_{\ell})\int_0^r\cos(\cdot)\mm_N\Big| \leq    \Big|\int_{B_{r}(P_N)\cap R(Q_{\ell})}u\mm -  \int_{Q_{\ell}} \int_{[0,r]} u\mm_{q} \, \qq(dq) \Big| \nonumber\\
& \quad+ \Big| \int_{Q_{\ell}} \int_{[0,r]} ~u \mm_{q}\,\qq(dq)
- \sqrt{N+1}\int_{Q_{\ell}} \int_{[0,r]}c_{q} \cos(\cdot) \mm_{q}\,\qq(dq) \Big| \nonumber\\
&  \quad + \Big|   \int_{Q_{\ell}} \int_{[0,r]} \sqrt{N+1}c_{q} \cos(\cdot) \mm_{q}\,\qq(dq)  -\sqrt{N+1}\bar{c}\qq(Q_{\ell})\int_0^r\cos(\cdot)\mm_N \Big|\, .\nonumber
\end{align}
Using  \eqref{eq:distancedensitiesImproved}, \eqref{eq:impr0r2}, \eqref{eq:longrays}, \eqref{eq:estNSlr},    \eqref{eq:Step2.1}, we continue as follows:  
\begin{equation}\label{eq:aa}
\leq \int_{Q_{\ell}} \int_{r-C\delta^{\beta/N}}^{r+C\delta^{\beta/N}} |u| \mm_{q} \, \qq(dq)   
+ C r^{N/2} \left( \delta^{\beta \min\{1/2, 1/N \}}r^{N/2}+ \delta^{\beta/2} +  r^{(N/2)-1} \delta^{\beta/N}\right) \int_{Q_{\ell}}|c_{q}|\,\qq(dq).
\end{equation}
Arguing by triangle inequality bounding first the distance from the normalized cosine (with \eqref{eq:impr0r21}) and then replacing the measures $\mm_q$ with the model measure $\mm_N$ (with \eqref{eq:distancedensitiesImproved}),  we estimate the first summand in the right hand side of \eqref{eq:aa} as  
\begin{equation}\label{eq:abcd}
\int_{Q_{\ell}}\int_{r-C\delta^{\beta/N}}^{r+C\delta^{\beta/N}} |u| \mm_{q} \, \qq(dq) \leq C (r^{N-1}   \delta^{\beta/N}+ r^{(N-1)/2} \delta^{\beta(1/2+1/(2N))} ) \int_{Q_{\ell}}\abs{c_q} \,\qq(dq)  \,.
\end{equation}
Combining  \eqref{eq:aa},  \eqref{eq:abcd}, and choosing $r= \delta^{\gamma/N}$ with $\gamma \in (0,\min\{\beta,1-\beta\})$ yields
\begin{equation}\label{eq:firstterm}
\left|\int_{B_{r}(P_N)\cap R(Q_{\ell})}u\mm -\bar{c}\sqrt{N+1}\qq(Q_{\ell})\int_0^{\bar{r}}\cos(\cdot)\mm_N\right|
\leq C\left(r^{N-1}\delta^{\beta/N}+r^{N}\delta^{\beta \min\{1/2, 1/N \}}+r^{N/2} \delta^{\beta/2}\right).
 \end{equation}
The combination of  \eqref{eq:Prop2Step1.2} \eqref{eq:secdterm} and  \eqref{eq:firstterm} gives
\begin{align}
&\left|\int _{B_{r}(P_N)}u\mm -\bar{c}\sqrt{N+1}\qq(Q_{\ell})\int_0^{\bar{r}}\cos(\cdot) \mm_N\right| \nonumber\\
& \qquad \qquad \le C\left(r^{N-1}\delta^{\beta/N}+r^{N} \delta^{\beta \min\{1/2, 1/N \}}+r^{N/2} \delta^{\beta/2}
 + \delta^{(1-\beta)/2}r^{N/2}\right). \label{eq:Prop2Step1.3}
\end{align}
To bound \eqref{eq:estmeans}, approximating the measure of the ball $B_r(P_N)$ and then the function $u$ with the respective model behaviours, we now estimate    
\begin{align}
\meas_N(&[0,\bar{r}])\qq(Q_{\ell})\left|\fint_{B_{r}(P_N)}u\mm-\bar c\sqrt{N+1}\qq(Q_{\ell})\fint_0^{\bar{r}}\cos(\cdot) \mm_N\right|^2  \nonumber \\
\le&~ 2\meas_N([0,\bar{r}])\qq(Q_{\ell})\left|\fint_{B_{r}(P_N)}u\mm-\frac{1}{\meas_N([0,\bar{r}])}\int_{B_{r}(P_N)}u\mm\right|^2  \nonumber \\
&+2\meas_N([0,\bar{r}])\qq(Q_{\ell})\left|\frac{1}{\meas_N([0,\bar{r}])}\int_{B_{r}(P_N)}u\mm-\bar{c}\qq(Q_{\ell})\sqrt{N+1}\fint_0^{\bar{r}}\cos(\cdot) \mm_N\right|^2  \nonumber
\\
\le &~ 2\meas_N([0,\bar{r}])\qq(Q_{\ell})\left(\int_{B_{r}(P_N)}u\mm\right)^2\left(\frac{1}{\meas(B_{r}(P_N))}-\frac{1}{\meas_N([0,\bar{r}])}\right)^2  \nonumber
\\
&+2\frac{1}{\meas_N([0,\bar{r}])}\qq(Q_{\ell})\left|\int _{B_{r}(P_N)}u\mm-\bar{c}\qq(Q_{\ell})\sqrt{N+1}\int_0^{\bar{r}}\cos(\cdot) \mm_N\right|^2.
\end{align}
Estimate the first term by Cauchy-Schwartz and the second term by  \eqref{eq:Prop2Step1.3}: 
\begin{align}
\le &~ 2\qq(Q_{\ell})
\left[\frac{(\meas(B_{r}(P_N))-\meas_N([0,\bar{r}]))^{2}}{\meas(B_{r}(P_N))\meas_N([0,\bar{r}])} \right]\int_{B_r(P_N)}u^2\mm  \nonumber\\
& \quad  +  C \qq(Q_{\ell})  \left[   \delta^{\gamma+2(\beta-\gamma)/N}+ \delta^{\gamma+ \beta  \min\{1, 2/N\}}+  \delta^{\beta}+\delta^{1-\beta} \right].   \nonumber
\end{align}
Now use \autoref{prop:volumeest}  
and choose $r=\delta^{\gamma/N}$, $\gamma\in(0,\min\{\beta,1-\beta\})$: 
\begin{align}
\le &~ 2\left(\int_{B_r(P_N)}u^2\mm\right)
\left(\frac{\meas_N([\bar{r}, r + C\delta^{\beta/N}])}{\meas_N([0,\bar{r}])}\right)^{2}+C\left(  \delta^{\gamma+2(\beta-\gamma)/N}+\delta^{\gamma+ \beta  \min\{1, 2/N\}}+ \delta^{\beta}+\delta^{1-\beta}  \right) \nonumber\\
\le &~ C\left(   \delta^{\gamma+2(\beta-\gamma)/N}+\delta^{\gamma+ \beta  \min\{1, 2/N\}}+ \delta^{\beta}+  \delta^{1-\beta} \right),\label{eq:Prop2Step1.4}
\end{align}
where the second inequality is obtained by observing that
\begin{align}
\int_{B_r(P_N)}u^2\mm &=  \int_{B_r(P_N)\setminus Q_{\ell}}u^2\mm +  \int_{B_r(P_N)\cap Q_{\ell}}u^2\mm  \nonumber\\
& \leq \delta^{1-\beta}+ 2 \int_{Q_{\ell}}  \int_{[0, r+C\delta^{\beta/N}]} \left( u- c_{q} \sqrt{N+1} \cos(\cdot) \right)^{2} \mm_{q} \qq(dq)  \nonumber\\
& \qquad + 2  \int_{Q_{\ell}}  \int_{[0, r+C\delta^{\beta/N}]} c^{2}_{q} (N+1) \cos^{2}(\cdot) \ \mm_{q} \qq(dq)   \nonumber\\
&\leq C (\delta^{1-\beta}+\delta^{\beta} +\delta^{\gamma+\beta   \min\{1, 2/N\}}+\delta^{\gamma} ).  \nonumber
\end{align}
The claimed estimate \eqref{E:step1} is eventually obtained via triangle inequality from \eqref{eq:Prop2Step1.1}  and  \eqref{eq:Prop2Step1.4}

\smallskip
\textbf{Step 2.} \\
In this second step of the proof, building upon \autoref{prop:varcqlongrays}, we shall obtain the bound
\begin{align}
\int_{Q_{\ell}}&~\int_{[0,\bar{r}]}\left(N+1\right)\left|\bar c\cos(\cdot) -\bar c\,\qq(Q_{\ell})\fint_0^{\bar{r}}\cos(\cdot) \mm_N\right|^2\mm_N\, \qq(dq) \nonumber\\
\leq &~ 2  \int_{B_{r}(P_N)}\left|u-\fint_{B_{r}(P_N)} u\mm\right|^2\mm+C \delta^{\gamma}\left(\delta^{4\gamma/N}+\delta^{(\beta-\gamma)/N} \right)+C \delta^{1-\beta}. \label{eq:Prop2Step2}
\end{align}
Thanks to the triangle inequality, the error we commit replacing $c_q\cos(\cdot)$ with 
$\bar c\cos(\cdot) $ can be controlled by
\begin{align}
\int _{Q_{\ell}}\int_{[0,\bar{r}]} &\left|c_q-\bar c\right|^2\cos^2(t)\mm_N(dt)\, \qq(dq) \leq 
 \meas_N([0,\bar{r}]) \int_{Q_{\ell}}\left|c_q-\bar c\right|^2\, \qq(dq) \nonumber \\
&~ \qquad \le  C \delta^{\gamma}\left(\delta^{4\gamma/N} +\delta^{(\beta-\gamma) \min\{ 1, 2/N\}}\right)+C\delta^{1-\beta+2\gamma/N}, \label{eq:prop2step2.1}
\end{align}
where the last inequality is a consequence of \eqref{eq:varcqlongrays}  and the fact that $\bar{r}\leq r=\delta^{\gamma/N}$, $\gamma \in (0,\min\{\beta,1-\beta\})$.
The claimed \eqref{eq:Prop2Step2}  follows  from   \eqref{eq:prop2step2.1} and \eqref{E:step1}  via triangle inequality. 

\smallskip
\textbf{Step 3.}\\
Using the Taylor expansion $\cos(t)=1+O(t^{2})$ in the left hand side of \eqref{eq:Prop2Step2}, we obtain
\begin{align*}
\int_{Q_{\ell}}& \int_0^{\bar{r}}\left(N+1\right)\Big|\bar c-\bar c\qq(Q_{\ell})\Big|^2\mm_N\, \qq(dq) 
\\
\le &~2  \int_{B_{r}(P_N)}\Big|u-\fint_{B_{r}(P_N)} u\mm\Big|^2\mm+C \delta^{\gamma}\left(\delta^{4\gamma/N}+\delta^{(\beta-\gamma)/N}\right)+C \delta^{1-\beta},
\end{align*}
giving
\begin{align*}
\meas_N([0,\bar{r}])  (N+1)\bar {c}^2(1-\qq(Q_{\ell}))^2
\qq(Q_{\ell})&\\  \le2  \int_{B_{r}(P_N)}\Big|u-\fint_{B_{r}(P_N)} u\mm\Big|^2\mm & +C \delta^{\gamma}\left(\delta^{4\gamma/N}+\delta^{(\beta-\gamma)/N}\right)+C \delta^{1-\beta}.
\end{align*}
Using the 2-2 Poincar\'e inequality \eqref{eq:LocPoincqq} (combined with Bishop-Gromov volume comparison), we obtain
\begin{align}
\meas_N&([0,\bar{r}])\left(N+1\right)\bar{c}^2 (1-\qq(Q_{\ell}))^2 \qq(Q_{\ell})  \nonumber 
\\
\le &~ C r^{2} \int_{B_{10r}(P_{N})} |\nabla u|^{2}\mm +C \delta^{\gamma}\left(\delta^{4\gamma/N}+\delta^{(\beta-\gamma)/N} \right)+C \delta^{1-\beta} 
   \nonumber 
\\
 \leq & ~ C \delta^{\gamma}\left(\delta^{4\gamma/N}+\delta^{(\beta-\gamma)/N} \right)+C \delta^{1-\beta}, \label{eq:Prop2Step3.2}
\end{align}
where in the last estimate we used  \eqref{eq:boundgradball} (recall that $r=\delta^{\gamma/N}$). 
\\Using again that $\int _{Q_{\ell}} \left|c_q-\bar c\right|^2\, \qq(dq)\leq  C\delta^{\alpha(N)}$ from  
\eqref{eq:varcqlongrays} for some $\alpha(N)>0$, observing that 
\begin{equation}\label{eq:intbarc2cq2=intbarccq2}
 \int_{Q_{\ell}}  (c^{2}_{q}-\bar{c}^{2}) \qq(dq)= \int_{Q_{\ell}}\abs{c_q-\bar{c}}^2\qq(dq),
\end{equation} 
 and recalling \eqref{eq:measQlong}, we get:
\begin{align}
\bar{c}^{2} \qq(Q_{\ell})
&= \int_{Q_{\ell}} c^{2}_{q} \qq(dq) +   \int_{Q_{\ell}}  (\bar{c}^{2}-c^{2}_{q}) \qq(dq)\geq 1-\delta^{1-\beta} - \int_{Q_{\ell}}\abs{c_q-\bar{c}}^2\qq(dq) \nonumber \\
& \geq 1-\delta^{1-\beta}-  C\delta^{\alpha(N)}>\frac{1}{C(N)}>0. \label{eq:estbarcqqQl}
\end{align}
Plugging \eqref{eq:estbarcqqQl} into \eqref{eq:Prop2Step3.2} yields:
\begin{equation}\label{eq:Prop2.final}
(1-\qq(Q_{\ell}))^2\leq  C \left(\delta^{4\gamma/N}+\delta^{(\beta-\gamma)/N}+\delta^{1-\beta-\gamma}  \right).
\end{equation}
\end{proof}

\begin{remark}\label{rm:boundmass}
Observe that a direct consequence of \autoref{prop:massoflongraysbound} above is an estimate of the measure of the region of the space which is not covered by transport rays, that is $\{u=0\}$.\\
Indeed \eqref{eq:boundalpha} implies in particular that
\begin{equation}
\mm(X\setminus\mathcal{T})\le 1-\qq(Q_{\ell})\leq C(N) \left(\delta^{2\gamma/N}+\delta^{(\beta-\gamma)/2N} +\delta^{(1-\beta-\gamma)/2}  \right).
\end{equation}
\end{remark}

\subsection{Proof of the main theorem}

We are now ready to prove the main result 
putting together the estimates we proved so far.
First reducing  to the set spanned by long rays using \autoref{prop:massoflongraysbound}; then, building upon \autoref{prop:varcqlongrays} and on \autoref{thm:mainthm1d}, we prove that on the set of long rays the function is close to a fixed multiple of the cosine of the distance from the endpoint. Eventually we change the distance from the endpoint of the ray into the distance from a pole thanks to \eqref{eq:neartoNS2}.

\begin{theorem}\label{thm:quantobata}
For any $N\in (1,\infty)$ there exist $C(N)>0$ and $\delta_0=\delta_{0}(N)>0$ with the following properties.
Let $(X,\dist,\meas)$ be an essentially non branching $\CD(N-1,N)$ m.m.s.. Then, for any $u\in\Lip(X)$ with $\int_Xu\mm=0$, $\int_Xu^2\mm=1$ and 
\begin{equation}
\delta:=\int_X\abs{\nabla u}^2\mm-N\le\delta_0,
\end{equation}
there exists a distinguished point $P\in X$ such that 
\begin{equation}\label{eq:claimMT}
\norm{u-\sqrt{N+1}\cos(\dist(P,\cdot))}_{L^2(\meas)}
\le  C(N)\delta^{1/(6N+4)}.
\end{equation}
\end{theorem}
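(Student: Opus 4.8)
The plan is to assemble \autoref{thm:quantobata} from the three technical pillars already established: the one-dimensional estimate \autoref{thm:mainthm1d} (giving \eqref{eq:1Duqcosdq}), the variance bound \autoref{prop:varcqlongrays}, and the long-ray mass bound \autoref{prop:massoflongraysbound}. First I would fix the south/north pole: take $P := P_N = a(X_{\bar q})$ from \autoref{prop:PNPS}, so that every long ray $X_q$, $q\in Q_\ell$, starts within $C(N)\delta^{\beta/N}$ of $P$ and along it the arclength parameter differs from $\dist(P,\cdot)$ by at most that same error. The target is $\norm{u-\sqrt{N+1}\cos(\dist(P,\cdot))}_{L^2(\meas)}$, and I would split $X = (X\setminus R(Q_\ell)) \cup R(Q_\ell)$.

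On the bad set $X\setminus R(Q_\ell)$: there $u$ is supported only on short rays (outside $\mathcal T$ we have $u=0$ $\mm$-a.e.), so $\int_{X\setminus R(Q_\ell)} u^2\,\mm \le \delta^{1-\beta}$ by \eqref{eq:measQlong}, while $\int_{X\setminus R(Q_\ell)} (N+1)\cos^2(\dist(P,\cdot))\,\mm \le (N+1)\,\mm(X\setminus R(Q_\ell)) \le (N+1)(1-\qq(Q_\ell))$, which by \autoref{prop:massoflongraysbound} is bounded by $C(N)(\delta^{\gamma/N}+\delta^{(\beta-\gamma)/(2N)}+\delta^{(1-\beta-\gamma)/2})$. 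So the $L^2$ contribution of the bad set is under control by a power of $\delta$.

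On the good set $R(Q_\ell)$: by the choice of sign of $c_q$ and \eqref{eq:1Duqcosdq}, each fiber satisfies $\norm{u_q - \sqrt{N+1}\,c_q\cos(\cdot)}_{L^2(\mm_q)} \le C(N)|c_q|\,\delta^{\beta\min\{1/2,1/N\}}$ — wait, more precisely $\norm{u_q - c_q\sqrt{N+1}\cos(\cdot)}_{L^2(\mm_q)}\le C(N)|c_q|\delta^{\beta\min\{1/2,1/N\}}$; integrating $c_q^2$ against $\qq$ and using \eqref{intcq2=1} bounds $\int_{Q_\ell}\norm{u_q - c_q\sqrt{N+1}\cos(\cdot)}^2_{L^2(\mm_q)}\,\qq \le C(N)\delta^{2\beta\min\{1/2,1/N\}}$. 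Next replace $c_q$ by $\bar c = \fint_{Q_\ell} c_q\,\qq$: the error is $\int_{Q_\ell}|c_q-\bar c|^2 (N+1)\int_{X_q}\cos^2\,\mm_q\,\qq \le C(N)\int_{Q_\ell}|c_q-\bar c|^2\,\qq$, controlled by \autoref{prop:varcqlongrays}. Then replace $\bar c$ by $1$: since $\bar c^2\qq(Q_\ell)\ge 1 - \delta^{1-\beta} - C(N)\delta^{\alpha(N)}$ (shown in \eqref{eq:estbarcqqQl}) and $\qq(Q_\ell)\ge 1-\delta^{1-\beta}$, and $\bar c>0$ by the sign convention and $u(P_N)>0$, one gets $|\bar c - 1|\le C(N)(\text{power of }\delta)$; multiplying by $\sqrt{N+1}\norm{\cos(\dist(P,\cdot))}_{L^2(R(Q_\ell),\mm)}\le C(N)$ gives another controlled error. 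Finally replace the arclength parameter along $X_q$ by $\dist(P,\cdot)$: since the two differ by $\le C(N)\delta^{\beta/N}$ uniformly on $R(Q_\ell)$ by \autoref{prop:PNPS} and $\cos$ is $1$-Lipschitz, this costs at most $\sqrt{N+1}\,C(N)\delta^{\beta/N}$. Collecting all terms, the $L^2$-distance is bounded by a sum of powers of $\delta$; the exponent in \eqref{eq:claimMT} is then obtained by optimizing the free parameters $\beta,\gamma\in(0,1)$ subject to the constraints of \autoref{prop:varcqlongrays} and \autoref{prop:massoflongraysbound}, and $1/(8N+4)$ should emerge as the best achievable exponent.

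The main obstacle is the bookkeeping of the optimization: every one of the three pillars produces several terms of the form $\delta^{f_i(\beta,\gamma,N)}$, and to land exactly on $1/(8N+4)$ one must choose $\beta$ and $\gamma$ (presumably something like $\beta$ close to $1/2$ and $\gamma$ a small multiple of $\beta$) so that the minimum over $i$ of $f_i$ is maximized — and verify the admissibility constraints $0<\gamma<\beta$, $\gamma < N(1-\beta)/(N-1)$, $\gamma<1-\beta$. There is no conceptual difficulty here, just a careful comparison of exponents; I would set it up as: the dominant slow terms are $\delta^{2\gamma/N}$ (from variance / mass), $\delta^{\beta/N}$ (from pole displacement), $\delta^{(1-\beta-\gamma)/2}$ (from the bad-set mass), and $\delta^{2\beta\min\{1/2,1/N\}}$ (from the fiberwise estimate), and balance these. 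A secondary technical point is making sure the $L^2$ computations on $R(Q_\ell)$ correctly account for the density $h_q$ versus $h_N$ — but \autoref{cor:estdens} and \eqref{eq:distancedensitiesImproved} handle exactly this, with errors of order $\delta^{\beta/N}$ times volume factors, which are subdominant.
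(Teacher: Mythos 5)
Your proposal follows essentially the same route as the paper's proof: the same choice $P=P_N$, the same split into $R(Q_\ell)$ and its complement, the same chain of replacements $u_q\rightsquigarrow c_q\sqrt{N+1}\cos\rightsquigarrow \bar c\sqrt{N+1}\cos\rightsquigarrow\sqrt{N+1}\cos$ controlled by \eqref{eq:1Duqcosdq}, \autoref{prop:varcqlongrays} and \autoref{prop:massoflongraysbound}, and the same final shift of base point via \autoref{prop:PNPS}. The only piece you defer, the parameter optimization, does close as you expect: the squared-norm bound reduces to $C(N)(\delta^{\gamma/N}+\delta^{(\beta-\gamma)/2N}+\delta^{(1-\beta-\gamma)/2})$ and the choice $\beta=\tfrac{3N}{4N+2}$, $\gamma=\tfrac{N}{4N+2}$ (admissible for the constraints) equalizes all three exponents at $\tfrac{2}{8N+4}$, giving \eqref{eq:claimMT}; note also that the sign normalization $\bar c\approx 1$ is obtained in the paper by a WLOG sign flip of $u$ (equivalently swapping $P_N$ and $P_S$) rather than by asserting $u(P_N)>0$.
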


\begin{proof}
{\bf Step 1.} \\
Let us begin observing that \autoref{prop:varcqlongrays} combined with \eqref{E:mean}  and \eqref{eq:intbarc2cq2=intbarccq2} gives
\begin{equation}\label{eq:MT.Step1.1}
\abs{\int_{Q_{\ell}}c_q^2 \,\qq(dq)-{\bar c}^2\qq(Q_{\ell})} \leq  C(N) \left(\delta^{4\gamma/N}+ \delta^{1-\beta-\gamma+(2\gamma/N)}+ \delta^{(\beta-\gamma) \min\{2/N, 1\}}\right).
\end{equation}
Since from \eqref{eq:measQlong} we know that 
$$
1 -  \delta^{1-\beta}\leq \int_{Q_{\ell}}c_{q}^{2}\,\qq(dq)\leq 1\,,$$ 
and  in \autoref{prop:massoflongraysbound} we proved that
\begin{equation}\label{eq:qQellgeq}
\qq(Q_{\ell})\geq 1-  C(N) \left(\delta^{2\gamma/N}+\delta^{(\beta-\gamma)/2N}+\delta^{(1-\beta-\gamma)/2}  \right),
\end{equation}
from \eqref{eq:MT.Step1.1} we infer that
\begin{equation}\label{eq:1-barc2}
\abs{1-{\bar c}^2}
\leq C(N) \left(\delta^{2\gamma/N}+ \delta^{(1-\beta-\gamma)/2}+ \delta^{(\beta-\gamma)/2N} \right).
\end{equation}
Notice that \eqref{eq:1-barc2} implies (see for instance the proof of \eqref{eq:estbetadefImproved})
\begin{equation}\label{eq:|1-barc|} 
\min\{|1-{\bar c}|, |1+\bar c|\}\le  C(N)  \left(\delta^{2\gamma/N}+ \delta^{(1-\beta-\gamma)/2}+ \delta^{(\beta-\gamma)/2N}\right).
\end{equation}
Without loss of generality (up to switching the sign of $u$) we can assume that  
$$|1-{\bar c}|=\min\{|1-{\bar c}|, |1+\bar c|\}.$$
The combination of  \autoref{prop:varcqlongrays} and  \eqref{eq:|1-barc|} gives
 \begin{align}
\int_{Q_{\ell}}\abs{c_q-1}^2\, \qq(dq)&\le 2 \int_{Q_{\ell}} |c_{q}-\bar{c}|^{2} \qq(dq) + 2 |\bar{c}-1|^{2}  \qq(Q_{\ell}) \nonumber\\
&\leq C(N) \left(\delta^{4\gamma/N}+ \delta^{1-\beta-\gamma}+ \delta^{(\beta-\gamma)/N} \right). \label{eq:varcq1}
\end{align}

{\bf Step 2.}\\
Next we let $P$ be equal to  $P_{N}$  given in  \autoref{prop:PNPS}.
We get: 
\begin{align*}
\big\| u &-\sqrt{N+1}\cos(\dist(P,\cdot))\big\|^2_{L^2(\meas)}
\\
= &~
\int_{Q}\int_{X_q}\abs{u-\sqrt{N+1}\cos(\dist(P,\cdot))}^2\mm_q\, \qq(dq) +\int_{X\setminus\mathcal{T}}(N+1)\cos(\dist(P,\cdot))^2\mm
\\
\le &~\int_{Q_{\ell}}\int_{X_q}\abs{u-\sqrt{N+1}\cos(\dist(P,\cdot))}^2\mm_q\, \qq(dq) \\
&~+2\int_{X\setminus R(Q_{\ell})}u^2\mm+2(N+1)\qq(Q\setminus Q_{\ell}) +(N+1)\mm(X\setminus \mathcal{T}).
\end{align*}
Use \eqref{eq:measQlong}, \eqref{eq:qQellgeq} and  \autoref{rm:boundmass}, recalling that we are tacitly identifying the ray $X_q$ with the interval $(0,|X_q|)$:
\begin{align*}
\le&~ \int_{Q_{\ell}}\int_{X_q}\abs{u-\sqrt{N+1}\cos(\dist(P,\cdot))}^2\mm_q\, \qq(dq) + C(N)  \left(\delta^{2\gamma/N}+\delta^{(\beta-\gamma)/2N} +\delta^{(1-\beta-\gamma)/2}  \right)   \\
\le &~2\int_{Q_{\ell}}\int_{X_q}\abs{u-\sqrt{N+1}\cos(\cdot)}^2\mm_q\, \qq(dq) \\&~+2\int_{Q_{\ell}}\int_{X_q}\abs{\cos(\cdot)-\cos(\dist(P,\cdot))}^2\mm_q\, \qq(dq)+  C(N) \left(\delta^{2\gamma/N}+\delta^{(\beta-\gamma)/2N} +\delta^{(1-\beta-\gamma)/2}  \right).
\end{align*}
Use triangle inequality to estimate the first term and \eqref{eq:neartoNS2} for the second:
\begin{align*}
\le&~ 4\int_{Q_{\ell}}\int_{X_q}\abs{u-\sqrt{N+1}c_q\cos(\cdot)}^2\mm_q\, \qq(dq)+C(N)\int_{Q_{\ell}}\abs{c_q-1}^2\, \qq(dq)\\
&~+ C(N) \left(\delta^{2\gamma/N}+\delta^{(\beta-\gamma)/2N} +\delta^{(1-\beta-\gamma)/2}  \right).
\end{align*}
Use  \eqref{eq:1Duqcosdq} and  \eqref{eq:varcq1}:
\begin{align}
&\le C(N)  \delta^{\beta \min\{1, 2/N\}} \int_{Q_{\ell}}c_q^2\, \qq(dq) +  C(N) \left(\delta^{2\gamma/N}+\delta^{(\beta-\gamma)/2N} +\delta^{(1-\beta-\gamma)/2}  \right)  \nonumber \\
&\le  C(N)  \left(\delta^{2\gamma/N}+\delta^{(\beta-\gamma)/2N} +\delta^{(1-\beta-\gamma)/2}  \right). \label{eq:prefinal}
\end{align}
The optimal choice of parameters in \eqref{eq:prefinal}  is  $\beta=\frac{5N}{6N+4}$ and $\gamma=\frac{N}{6N+4}$ giving the claim \eqref{eq:claimMT}.
\end{proof}


\footnotesize

\end{document}